\documentclass[final,notitlepage,12pt,reqno]{amsart}

\usepackage[table]{xcolor}
\usepackage{etoolbox,mathtools}
\makeatletter
\patchcmd{\@makefnmark}{\fontsize}{\check@mathfonts\fontsize}{}{}
\makeatother

\makeatletter

\DeclareFontFamily{U}{mathx}{}
\DeclareFontShape{U}{mathx}{m}{n}{<-> mathx10}{}
\DeclareSymbolFont{mathx}{U}{mathx}{m}{n}
\DeclareMathAccent{\widecheck}{0}{mathx}{"71}

\def\smallunderbrace#1{\mathop{\vtop{\m@th\ialign{##\crcr
   $\hfil\displaystyle{#1}\hfil$\crcr
   \noalign{\kern3\p@\nointerlineskip}%
   \tiny\upbracefill\crcr\noalign{\kern3\p@}}}}\limits}
\makeatother

\UseRawInputEncoding
\usepackage{graphicx,extarrows}
\usepackage{calc}
\usepackage{url}
\newlength{\depthofsumsign}
\makeatletter
\let\I\@undefined
\makeatother

\usepackage{geometry}
\usepackage{indentfirst}
\usepackage[normalem]{ulem}
\usepackage{float}
\usepackage{amsthm}

\usepackage{enumitem}[2011/09/28]
\setenumerate{align=left, leftmargin=0pt,labelsep=.5em, labelindent=0\parindent,listparindent=\parindent,itemindent=*}
\usepackage{array}
\usepackage[all]{xy}

\usepackage{exscale,relsize}
\usepackage{multirow}

\usepackage{caption}[2012/02/19]

\usepackage{longtable,lscape}

\usepackage{colonequals}

\usepackage{amssymb,bm,amsmath}
\usepackage{amsfonts}

\usepackage[OT2,T1,T2A]{fontenc}
\usepackage[utf8x]{inputenc}
\usepackage[french,german,russian,english]{babel}
\usepackage{appendix}

\DeclareMathOperator{\Span}{span}

\DeclareMathOperator*{\fib}{fib}

\DeclareMathOperator*{\vfib}{\widetilde{fib}}
\DeclareMathOperator{\Lyn}{Lyn}

\DeclareMathOperator{\Shf}{\text{\cyrins{\textsf{Ш}}}}
\DeclareMathOperator{\shf}{\text{\cyrins{\textsf{ш}}}}

\DeclareMathOperator{\Ls}{Ls}

\DeclareMathOperator{\Li}{Li}

\DeclareMathOperator{\D}{d}
\DeclareMathOperator{\I}{Im}
\DeclareMathOperator{\RE}{Re}
\DeclareMathOperator*{\Reg}{Reg}

\def\eor{\hfill$ \square$}

\newcolumntype{L}{>{$}l<{$}}
\newcolumntype{C}{>{$}c<{$}}
\newcolumntype{R}{>{$}r<{$}}

\theoremstyle{plain}
\newtheorem{theorem}{Theorem}[section]
\newtheorem{proposition}[theorem]{Proposition}

\newenvironment{remark}[1][Remark.]{\begin{trivlist}
\item[\hskip \labelsep {\bfseries #1}]}{\end{trivlist}}

\theoremstyle{definition}

\numberwithin{equation}{section}

\usepackage{Baskervaldx}
\usepackage[baskervaldx]{newtxmath}
\usepackage[cal=cm,scr=rsfs,frak=euler]{mathalfa}

\usepackage[scr=rsfs]{mathalfa}
\DeclareMathAlphabet{\mathsf}{OT1}{\sfdefault}{m}{n}
\SetMathAlphabet{\mathsf}{bold}{OT1}{\sfdefault}{m}{n}
\DeclareSymbolFontAlphabet{\mathbb}{AMSb}
\DeclareRobustCommand{\cyrins}[1]{%
  \begingroup\fontfamily{erewhon-TLF}%
  \foreignlanguage{russian}{#1}%
  \endgroup
}

\usepackage{color}

\def\Z{\Bbb Z}

\def\bg{\bigg}
\def\({\bg(}
\def\){\bg)}

\begin{document}

\pagenumbering{roman}
\selectlanguage{english}
\title{Multiple Clausen values and  deformed Ap\'ery-like series}\author{Zhi-Wei Sun}\address[Z.-W.\ Sun]{School of Mathematics, Nanjing
University, Nanjing 210093, People's Republic of China}
\email{{\tt zwsun@nju.edu.cn}
\newline\indent
{\it Homepage}: {\tt http://maths.nju.edu.cn/\lower0.5ex\hbox{\~{}}zwsun}}
 \author{Yajun Zhou}
\address[Y. Zhou]{Program in Applied and Computational Mathematics (PACM), Princeton University, Princeton, NJ 08544} \email{yajunz@math.princeton.edu}\curraddr{\textrm{} \textsc{Academy of Advanced Interdisciplinary Studies (AAIS), Peking University, Beijing 100871, P. R. China}}\email{yajun.zhou.1982@pku.edu.cn}

\date{\today}\thanks{\textit{Keywords}: Ap\'ery-like series, multiple Clausen values, cyclotomic multiple zeta values \\\indent\textit{MSC 2020}: 11M06, 11M32\\\indent * Z.-W. Sun was supported by the Natural Science Foundation of China (grant no.\ 12371004). Y. Zhou was supported in part  by the Applied Mathematics Program within the Department of Energy
(DOE) Office of Advanced Scientific Computing Research (ASCR) as part of the Collaboratory on
Mathematics for Mesoscopic Modeling of Materials (CM4)}


\begin{abstract}With generalized central binomial coefficients $ \binom{2x}{x}:=\frac{\Gamma(2x+1)}{[\Gamma(x+1)]^2}$ defined through Euler's gamma function, we represent deformed Ap\'ery-like series \[ \mathscr A_{s,n}:=\sum_{k=1}^\infty\left.\!\frac{\partial^n}{\partial x^n}\frac{1}{x^s\binom{2x}{x}}\right|_{x=k} \] by multiple Clausen values (MCVs), which belong to a special class of cyclotomic multiple zeta values (CMZVs) at level $3$. For example, exploiting provable algebraic relations among MCVs, we show that \[\mathscr A_{1,5}=-\frac{9[495L(\chi_{-3},6)-30\pi^{2}L(\chi_{-3},4)-2\pi^{4}L(\chi_{-3},2)]}{4}\]and\[\mathscr A_{4,4}=\frac{352\zeta_{5,3}}{15}+\frac{752537\pi^{8}}{10206000},\]where $ L(\chi_{-3},s):=\sum_{n=0}^\infty\left[(3n+1)^{-s}-(3n+2)^{-s}\right]$ and $ \zeta_{5,3}:=\sum_{m>n>0} m^{-5}n^{-3}$.\end{abstract}

\maketitle
\pagenumbering{arabic}

\section{Introduction\label{sec:intro}}  Let $ \binom{2k}{k}\colonequals \frac{(2k)!}{(k!)^2}$ be the central binomial coefficent. The infinite series\begin{align}
\sum_{k=1}^\infty\frac{1}{k^2\binom{2k}{k}}=\frac{\zeta_2}{3}\text{ and }\sum_{k=1}^\infty\frac{(-1)^{k-1}}{k^3\binom{2k}{k}}=\frac{2\zeta_{3}}{5}
\label{eq:A2A3}\end{align}played crucial parts in Ap\'ery's proof \cite{Apery1978,Poorten1978} for the irrationality of $ \zeta_2\colonequals \sum_{k=1}^\infty\frac{1}{k^2}=\frac{\pi^2}{6}$ and   $ \zeta_3\colonequals \sum_{k=1}^\infty\frac{1}{k^3}$.
 Subsequently, for positive integers $ s$, the Ap\'ery-like series\begin{align}
\mathscr A_s\colonequals \sum_{k=1}^\infty\frac{1}{k^s\binom{2k}{k}}
\end{align}were systematically investigated by Zucker \cite{Zucker1985}  and Borwein--Broadhurst--Kamnitzer \cite{BorweinBroadhurstKamnitzer2001}, culminating in an elegant integral representation (cf.\ \cite[(2.5)]{Zucker1985} or \cite[Lemma 3.1]{BorweinBroadhurstKamnitzer2001}) for $ s\in\mathbb Z_{>1}$:
\begin{align}
\mathscr A_s=\frac{(-2)^{s-2}}{(s-2)!}\int_0^{\pi/3}\theta\log^{s-2}\left( 2\sin\frac{\theta}{2} \right)\D\theta.\label{eq:AsLs}
\end{align}
 The integral on the right-hand side of the equation above can be evaluated symbolically, by the \texttt{logsine} package (current version known as \texttt{LsToLi} v2.0) of Borwein--Straub \cite{BorweinStraub2011ISSAC}. The special value $ \mathscr A_1=\frac{\pi}{3\sqrt{3}}$ does not fit into the aforementioned pattern, which deserves a separate  treatment.

Sun \cite{Sun2026dSum} has  recently performed numerical experiments on deformations of Ap\'ery-like series:\begin{align}
\mathscr A_{s,n}\colonequals \sum_{k=1}^\infty\left.\!\frac{\partial^n}{\partial x^n}\frac{1}{x^s\binom{2x}{x}}\right|_{x=k}\equiv \sum_{k=1}^\infty\left.\!\frac{\partial^n}{\partial x^n}\frac{[\Gamma(x)]^{2}}{2x^{s-1}\Gamma(2x)}\right|_{x=k},
\end{align}where $ \Gamma(x)\colonequals \int_0^\infty t^{x-1}e^{-t}\D t$ is Euler's gamma function for $ x>0$. Empirically, many instances of these deformed Ap\'ery-like series can be represented via members in the $ \mathbb Q$-vector space spanned by cyclotomic multiple zeta values (CMZVs) of weight $ w\in\mathbb Z_{>0}$ and level $N\in\mathbb Z_{>0}$:\begin{align}
 \mathfrak Z_{w}(N)\colonequals\Span_{\mathbb Q}\left\{\Li_{a_1,\dots,a_n}(z_1,\dots,z_n)\colonequals \smash[b]{\sum\limits_{\ell_{1}>\dots>\ell_{n}>0}\prod\limits_{j=1}^n\frac{\smash[t]{z_{j}^{\ell_{j}}}}{\ell_j^{a_j}}}\middle|\begin{smallmatrix}a_1,\dots,a_n\in\mathbb Z_{>0}\\z_{1}^{N}=\dots=z_n^N=1\\\sum _{j=1}^{n}a_{j}=w\\(a_1,z_1)\neq(1,1)\end{smallmatrix} \right\}.\label{eq:Zk(N)}
\end{align} In \S\S\ref{sec:CMZVstruct}--\ref{sec:evalClGl} of this work, we will give theoretical justifications for these CMZV characterizations, as summarized by the three theorems below.\begin{theorem}
\label{thm:A1n}For all $ n\in\mathbb Z_{\geq0}$, we have \begin{align}
\mathscr A_{1,n}\in i\sqrt{3}\mathfrak Z_{n+1}(3),\label{eq:A1nCMZV3}
\end{align}with the retroactive definition that $ \mathscr A_{1,0}\colonequals \mathscr A_1$.\end{theorem}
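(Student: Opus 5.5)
We start from the beta-integral representation of the summand. For $x>0$,
\begin{align*}
\frac{1}{x\binom{2x}{x}}=\frac{[\Gamma(x)]^2}{2\Gamma(2x)}=\frac12\int_0^1 t^{x-1}(1-t)^{x-1}\D t .
\end{align*}
Differentiating $n$ times under the integral sign gives
\begin{align*}
\frac{\partial^n}{\partial x^n}\frac{1}{x\binom{2x}{x}}=\frac12\int_0^1 \frac{\log^n[t(1-t)]\,[t(1-t)]^{x}}{t(1-t)}\D t .
\end{align*}
We then set $x=k$ and sum over $k\geq1$. The geometric series $\sum_{k\ge1}u^k=u/(1-u)$ with $u=t(1-t)\in[0,1/4]$ converges uniformly, so we may interchange sum and integral. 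The result is
\begin{align*}
\mathscr A_{1,n}=\frac12\int_0^1\frac{\log^n[t(1-t)]}{1-t+t^2}\D t .
\end{align*}
The case $n=0$ recovers $\mathscr A_1=\pi/(3\sqrt3)$. The key observation is that $1-t+t^2=(t+\omega)(t+\bar\omega)$ with $\omega=e^{\pi i/3}$, so that
\begin{align*}
\frac{1}{1-t+t^2}=\frac{1}{i\sqrt3}\left(\frac{1}{t+\bar\omega}-\frac{1}{t+\omega}\right),\qquad \omega-\bar\omega=i\sqrt3 .
\end{align*}
This is where the prefactor $i\sqrt3$ comes from. Since $1/(i\sqrt3)=-i\sqrt3/3$, it suffices to show that
\begin{align*}
I_\pm:=\int_0^1\frac{\log^n[t(1-t)]}{t+\omega^{\pm1}}\D t\in\mathfrak Z_{n+1}(3)+\dots,
\end{align*}
or more precisely that the difference $I_--I_+$ lies in $\mathfrak Z_{n+1}(3)$.

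To do this we expand $\log^n[t(1-t)]=\sum_j\binom nj\log^jt\,\log^{n-j}(1-t)$. Each term $\int_0^1\log^jt\log^{n-j}(1-t)\,\frac{\D t}{t-a}$ with $a=-\omega^{\pm1}$ is an iterated integral on $[0,1]$ of the forms $\D t/t$, $\D t/(1-t)$ and $\D t/(t-a)$. Here $-\omega$ and $-\bar\omega$ are primitive sixth roots of unity, so a priori this lands in level $6$. Two steps fix this.

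First, powers of logarithms are written as shuffle-regularized iterated integrals. Using $\log^j t=j!\,(-1)^j\int\cdots$ along paths from $1$, and $\log^m(1-t)$ from $0$, we express each term as a $\mathbb Q$-linear combination of iterated integrals $\int_0^1\omega_{a_1}\cdots\omega_{a_{n+1}}$ with letters $a_i\in\{0,1,-\omega,-\bar\omega\}$. All such integrals converge, because the final form $\D t/(t+\omega^{\pm1})$ is regular at both endpoints. By Goncharov's correspondence these are $\mathbb Q$-combinations of $\Li$ values at sixth roots of unity, with weight exactly $n+1$.

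Second, and this is the main obstacle, we must descend from level $6$ to level $3$. For this we exploit the symmetry $t\mapsto1-t$ of the integrand $\log^n[t(1-t)]/(1-t+t^2)$ and a Möbius change of variables that sends the singular set $\{0,1,\infty,-\omega,-\bar\omega\}$ into $\{0,\infty\}\cup\mu_3$. A natural candidate is $t\mapsto t/(t-1)$ composed with scaling: under $u=-\bar\omega t/(1-t)$-type substitutions, $t(1-t)$ and $1-t+t^2$ factor over cube roots of unity. Concretely, the map $t=1/(1-\omega s)$ (or similar) sends $-\omega^{\pm1}$ and $0,1,\infty$ onto $0,\infty,1,\omega,\omega^2$ up to relabeling. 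This converts the integral into iterated integrals on letters $\{0\}\cup\mu_3$, although the path is no longer the segment $[0,1]$.

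If the path issue proves troublesome, we can instead work with the angle form. Setting $t=\tfrac12+\tfrac{i}{2\sqrt3}\tan(\cdot)$ is not real, so the real route is the Zucker-type substitution $t(1-t)=\frac{1}{4\cos^2(\cdot)}$. Following \eqref{eq:AsLs}, this turns $\mathscr A_{1,n}$ into a log-sine–type integral over $[0,\pi/3]$. Such integrals of $\log^m(2\sin\frac\theta2)$ times powers of $\theta$ are known to be real or imaginary parts of multiple polylogarithms at $e^{i\pi/3}$. The descent from $e^{i\pi/3}$ to level $3$ then uses the distribution relation $\Li(z^2)$ versus $\Li(\pm z)$ together with parity, with weight counting kept throughout.

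Once the combination is in $\mathfrak Z_{n+1}(3)$, the remaining check is that the prefactor is exactly $i\sqrt3$ times a rational combination. This follows because complex conjugation swaps the $\omega$ and $\bar\omega$ pieces, so the difference is $i$ times a real quantity. Finally, $\sqrt3$ is absorbed via $\omega-\bar\omega=i\sqrt3$.
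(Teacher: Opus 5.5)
Your reduction to level $6$ follows the paper's route to \eqref{eq:A1n_Z(6)}: beta integral, partial fractions, and shuffle expansion of $\log^n[t(1-t)]$. There is one slip. With $\omega=e^{\pi i/3}$ one gets $(t+\omega)(t+\bar\omega)=t^2+t+1$, so that line needs $\omega=e^{2\pi i/3}$; the roots of $1-t+t^2$ are $\varrho^{\pm1}$ with $\varrho=e^{\pi i/3}$.

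The genuine gap is the descent from $\mathfrak Z_{n+1}(6)$ to $\mathfrak Z_{n+1}(3)$. That descent is the whole content of the theorem beyond the level-$6$ statement, and you leave it as a list of unexecuted options. The explicit maps you name cannot work. Every M\"obius map sending $\{0,1\}$ to $\{0,\infty\}$ (e.g.\ $t=1/(1-\omega s)$ or $u=c\,t/(1-t)$) sends $\{\infty,\varrho,\bar\varrho\}$ to a set $c\{1,\varrho,\bar\varrho\}$, which is still a level-$6$ configuration. The fallback also fails, for three reasons:
\begin{enumerate}
\item For real $t\in[0,1]$ one has $t(1-t)\le\frac14\le\frac{1}{4\cos^2(\cdot)}$, so that substitution is unavailable.
\item The integrand does not become a single $\theta^k\log^m|2\sin\frac\theta2|$; it gives powers of a combination of differently shifted log-sines.
\item The distribution relation isolates only depth-one values $\Li_a(\varrho)$. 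At depth $\geq2$ it ties $\Li_{\boldsymbol a}(\varrho,1,\dots,1)$ to level-$6$ terms with $-1$ in later slots, so it does not by itself effect the descent. Compare the depth-two term $\I[3\Li_{4,1}(\omega,1)+\Li_{3,2}(\omega,1)]$ in \eqref{eq:mu5}.
\end{enumerate}

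Your first idea does work once the map is chosen correctly. The points $\varrho^{\pm1}$ are the fixed points of the order-$3$ map $t\mapsto1/(1-t)$ that cycles $0\to1\to\infty$, so they must be sent to $0$ and $\infty$. With $\omega=e^{2\pi i/3}$, the map $s=\omega^2\frac{t-\varrho}{t-\bar\varrho}$ does this:
\begin{enumerate}
\item It sends $0,1,\infty,\varrho,\bar\varrho$ to $1,\omega,\omega^2,0,\infty$, and maps $[0,1]$ onto the arc of $|s|=1$ from $1$ to $\omega$.
\item It gives $\frac{\D t}{1-t+t^2}=\frac{1}{i\sqrt3}\frac{\D s}{s}$, and $\log[t(1-t)]=G(1;s)+G(\omega;s)-2G(\omega^2;s)$ on that arc.
\item Expand the $n$-th power by shuffles. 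Since $G(0,\boldsymbol\beta;s)$ is a primitive of $G(\boldsymbol\beta;s)\frac{\D s}{s}$ on the open unit disk, continuous up to the arc, each term integrates to $G(0,\boldsymbol\beta;\omega)-G(0,\boldsymbol\beta;1)$ with $\beta_j\in\{1,\omega,\omega^2\}$.
\item These values lie in $\mathfrak Z_{n+1}(3)$ by \eqref{eq:GPLscaling} and \eqref{eq:Zk(N)''}. The prefactor $\frac{1}{2i\sqrt3}$ then gives membership in $i\sqrt3\,\mathfrak Z_{n+1}(3)$ directly, with no conjugation argument.
\end{enumerate}
That would be a self-contained one-step alternative to the paper. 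The paper instead fibrates $\sum_{\boldsymbol\alpha}G(\varrho,\boldsymbol\alpha;1)$ with \texttt{HyperInt} into MZVs, powers of $\pi i$, and GPLs $G(\boldsymbol\beta;\varrho)$ with $\beta_j\in\{0,1\}$ (i.e.\ MCVs). It then descends these via the four-point reparametrization $\varrho=\frac{z-\omega^2}{z-\omega}\big|_{z=1}$ of Proposition~\ref{prop:Cd}. As submitted, however, your proposal establishes only the level-$6$ statement.
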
\begin{theorem}\label{thm:Asn}For all $ s\in\mathbb Z_{>1}$ and $ n\in\mathbb Z_{\geq0}$, we have 
\begin{align}
\mathscr A_{s,n}\in \mathfrak Z_{n+s}(3),\label{eq:AsnCMZV3}
\end{align}with the retroactive definition that $ \mathscr A_{s,0}\colonequals \mathscr A_s$.\end{theorem}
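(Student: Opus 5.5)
We need to prove Theorem~\ref{thm:Asn}: $\mathscr A_{s,n}\in\mathfrak Z_{n+s}(3)$ for $s\ge2$. The plan is to convert the $n$-th $x$-derivative into an integral over $\theta\in(0,\pi/3)$ with a log-sine-type integrand, and then reduce log-sine integrals at $\pi/3$ to level-$3$ CMZVs.

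\emph{Step 1: a Beta-integral representation valid for real $x$.} Start from
\[
\frac{1}{x\binom{2x}{x}}=\frac{\Gamma(x)\Gamma(x+1)}{\Gamma(2x+1)}=B(x,x+1)=\int_0^1 t^{x-1}(1-t)^x\,\D t .
\]
Substituting $t=\sin^2(\phi/2)$-type variables, or equivalently following the classical route to \eqref{eq:AsLs}, gives a representation of the form
\[
\frac{1}{x\binom{2x}{x}}=\int_0^{\pi/2}(\ldots)\,(2\sin\phi)^{-2x}\,\D\phi .
\]
Then $\partial_x^n$ simply brings down $(-2\log(2\sin\phi))^n$. The factor $x^{-(s-1)}$ is handled by Leibniz' rule, since $\partial_x^m x^{-(s-1)}$ is a rational multiple of $x^{-(s-1+m)}$. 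Hence $\mathscr A_{s,n}$ is a $\mathbb Q$-linear combination of the sums
\[
\sum_{k\ge1}\frac{1}{k^{s-1+m}}\int(\ldots)\log^{n-m}(2\sin\phi)\,(2\sin\phi)^{-2k}\,\D\phi .
\]

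\emph{Step 2: Zucker's summation trick.} Interchange the sum over $k$ with the integral. The inner sum $\sum_k z^k/k^{j}$ with $z=(2\sin\phi)^{-2}$ is a polylogarithm $\Li_j$. Follow Zucker/BBK: the substitution $4\sin^2(\theta/2)=z$-type maps the integration region to $\theta\in(0,\pi/3)$, and an integration by parts produces iterated integrals of the form
\[
\int_0^{\pi/3}\theta\,\log^a\!\left(2\sin\frac\theta2\right)\Li_b(\cdots)\,\D\theta .
\]
More cleanly, I would write everything as iterated integrals on $[0,1]$ in the variable $u=e^{i\theta}$ along the arc from $1$ to $e^{i\pi/3}$. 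On this arc $\log(2\sin(\theta/2))=\RE\log(1-u)$ and $\theta=-i\log u$. This yields a $\mathbb Q$-combination of generalized log-sine integrals
\[
\Ls^{(k)}_{j}\!\left(\frac{\pi}{3}\right)=-\int_0^{\pi/3}\theta^k\log^{j-1-k}\left|2\sin\frac\theta2\right|\D\theta ,
\]
together with products of these and lower-weight polylogarithmic values. The total weight is $n+s$ throughout, because each derivative and each power of $1/k$ contributes exactly one.

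\emph{Step 3: log-sine at $\pi/3$ is in $\mathfrak Z(3)$.} Express each such integral via the \texttt{LsToLi} framework. Generalized log-sines at $\pi/3$ are real or imaginary parts of multiple polylogarithms evaluated at $e^{i\pi/3}$, which is a sixth root of unity. To land in level $3$ rather than level $6$, I would use the identities $1-e^{i\pi/3}=e^{-i\pi/3}$ and $|1-u|$ symmetries. The cleanest device is to rewrite the iterated integrals with the differential forms $\D u/u$, $\D u/(1-u)$ and $\D u/(u-\omega)$ with $\omega=e^{2\pi i/3}$, via the change of variable $u\mapsto u^2$ or the path $0\to e^{i\pi/3}$ reflected. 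Alternatively, cite the known result (Borwein--Straub, Deligne's bounds for level-$3$ motives) that these specific log-sine-at-$\pi/3$ combinations lie in $\mathfrak Z_w(3)$; the reality condition and the parity of the $\theta$-power decide between $\mathfrak Z_w(3)$ and $i\sqrt3\,\mathfrak Z_w(3)$. For $s\ge2$ the power of $\theta$ that survives is odd, which gives real elements. This contrasts with $s=1$, where it is even and produces the $i\sqrt3$ of Theorem~\ref{thm:A1n}.

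\emph{Main obstacle.} I expect the hardest part to be Step 3: showing the level drops from $6$ to $3$, and that the correct real/imaginary part lies in $\mathfrak Z_{n+s}(3)$ itself rather than in $i\sqrt3\,\mathfrak Z_{n+s}(3)$. I would handle it by proving, by induction on the depth of the iterated integral, that every integral $\int_0^{\pi/3}\theta^{a}\log^{b}(2\sin\frac\theta2)\,\D\theta$ with $a$ odd equals a real element of $\mathfrak Z_{a+b+1}(3)$. The induction would use the generating function $\int_0^{\pi/3}e^{i\lambda\theta}(2\sin\frac\theta2)^{\mu}\,\D\theta$ expanded in $\lambda,\mu$. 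Its coefficients can be written as level-$3$ multiple polylogarithms because $e^{i\pi/3}=-\bar\omega$, so a substitution $u\mapsto -u$ converts $\D u/(1+u)$-forms at argument $\bar\omega$ back into level-$3$ alphabet letters.
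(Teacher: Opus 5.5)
There is a genuine gap, in Steps 2 and 3.

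Step 2 claims that after summing over $k$ everything collapses to (products of) generalized log-sine integrals $\Ls^{(k)}_j(\pi/3)$. Zucker's trick does this only for $n=0$. It rests on the closed form $\sum_{k\ge1}\frac{(2\sin\frac\theta2)^{2k}}{k\binom{2k}{k}}=\theta\tan\frac\theta2$. Equivalently, in the contour version, it rests on $\Li_{s-1}\big(\frac{u}{(1+u)^2}\big)$ having its only cut on the arc $\{e^{i\phi}\colon 2\pi/3\le\phi\le4\pi/3\}$.

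Once $\partial_x^n$ has acted, the $k$-sum produces terms $\log^{n-m}(y)\Li_{s-1+m}(y)$ with $y=t(1-t)$. (In Step 1 the base should be $\frac14\sin^2\psi$, e.g.\ $\frac{1}{x\binom{2x}{x}}=\int_0^{\pi/2}(\frac12\sin\psi)^{2x-1}\D\psi$; your $(2\sin\phi)^{-2x}$ is not integrable at $\phi=0$.) The powers of $\log y$ add a second cut along $u<0$, so nothing reduces to $\int_0^{\pi/3}\theta^a\log^b(2\sin\frac\theta2)\,\D\theta$. Your own intermediate form $\int_0^{\pi/3}\theta\log^a(2\sin\frac\theta2)\Li_b(\cdots)\,\D\theta$ already shows this, and your Step 3 induction never handles the $\Li_b(\cdots)$ factor.

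Worse, the intermediate claim is expected to be false. $\mathscr A_{3,3}$ and $\mathscr A_{5,1}$ contain $\RE\mu_{4,2}$, a depth-two MCV that is conjecturally primitive. Log-sine integrals at $\pi/3$ only produce Nielsen-type MCVs $\mu_{a,1,\dots,1}$, zeta values and powers of $\pi$.

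The missing idea is to keep the full iterated-integral structure. Start from $\frac{1}{x\binom{2x}{x}}=\frac12\int_0^1[t(1-t)]^{x-1}\D t$. Since $\int_0^t(\frac1\tau+\frac1{\tau-1})[\tau(1-\tau)]^x\,\D\tau=\frac{[t(1-t)]^x}{x}$, each extra power of $1/x$ is one more integration against $\D\log[\tau(1-\tau)]$. This gives $\mathscr A_{s,n}=-2n!\RE\sum_{\alpha_i,\beta_j\in\{0,1\}}G(0,\alpha_1,\dots,\alpha_{s-2},\varrho,\beta_1,\dots,\beta_n;1)$ with $\varrho=e^{\pi i/3}$. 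Fibration then turns this into a $\mathbb Q$-combination of products of MZVs, powers of $\pi i$, and $G(\text{word in }0,1;\varrho)$ of \emph{arbitrary} shape.

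Step 3's level-lowering device does not work either. Scaling by $-1$ sends a $\{0,1\}$-word at $\varrho$ to a $\{0,-1\}$-word at $-\varrho=\omega^2$, i.e.\ a $\{0,-\omega\}$-word at $1$. But $-\omega=e^{5\pi i/3}$ is still a primitive sixth root of unity. The Deligne--Goncharov dimension bounds do not give membership in $\mathfrak Z_w(3)$ either.

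What works is the M\"obius substitution $v=\frac{z-\omega^2}{z-\omega}$. It carries the level-$3$ points $\omega^2,\infty,\omega$ to the singularities $0,1,\infty$ of $G(\cdot\,;v)$, and sends $z=0$ to $\omega$ and $z=1$ to $\varrho$. Integrating in $z$ from $0$ to $1$ expresses every $G(\text{word in }0,1;\varrho)$ as a $\mathbb Z$-combination of products $G(\alpha_1,\dots,\alpha_m;1)\,G(\beta_1,\dots,\beta_{w-m};\omega)$ with $\alpha_i\in\{\omega,\omega^2\}$ and $\beta_j\in\{0,1\}$, all of level $3$.

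Finally, your parity-of-$\theta$ argument is unnecessary. $\mathfrak Z_w(3)$ is stable under complex conjugation ($\omega\leftrightarrow\omega^2$), so the real part of any element of $\mathfrak Z_w(3)$ stays in $\mathfrak Z_w(3)$.
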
\begin{theorem}\label{thm:A2nA4n}For all  $ n\in\mathbb Z_{\geq0}$, we have \begin{align}\mathscr A_{4,n}\in{}&\mathfrak Z_{n+4}(1).\label{eq:A4nMZV}
\end{align}
\end{theorem}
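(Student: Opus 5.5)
The plan is to find an integral representation of $\mathscr A_{4,n}$ whose logarithmic integrand lives over $\mathbb Q$, not over $\mathbb Q(e^{2\pi i/3})$. The obstruction to level $1$ is the endpoint $\pi/3$ in \eqref{eq:AsLs}. For $s=4$, however, one expects cancellation, as in the classical $\mathscr A_4=\frac{17\pi^4}{3240}$.

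\textbf{Step 1: a generating function.} I start from the beta integral
\[
\frac{1}{x\binom{2x}{x}}=\frac{\Gamma(x)^2}{2\Gamma(2x)}=\frac12\int_0^1 t^{x-1}(1-t)^{x-1}\D t .
\]
This gives
\[
\frac{1}{x^{4}\binom{2x}{x}}=\frac{1}{2x^{3}}\int_0^1 [t(1-t)]^{x-1}\D t .
\]
Next I write
\[
\frac{1}{x^{3}}=\frac12\int_0^1 u^{x-1}\log^2(1/u)\,\D u ,
\]
so that $\frac{1}{x^{4}\binom{2x}{x}}$ becomes a double integral of $[u\,t(1-t)]^{x-1}$ against a logarithmic weight. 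Differentiating $n$ times in $x$ simply inserts $\log^n[u\,t(1-t)]$. Summing over $k\ge1$ then produces the kernel
\[
\frac{1}{1-u\,t(1-t)} .
\]
So
\[
\mathscr A_{4,n}=\frac14\int_0^1\!\!\int_0^1\frac{\log^2(1/u)\,\log^n[u\,t(1-t)]}{1-ut(1-t)}\,\D u\,\D t .
\]
Equivalently, I can package everything into the generating function $\sum_n \mathscr A_{4,n}\varepsilon^n/n!=\sum_k \frac{1}{(k+\varepsilon)^4\binom{2k+2\varepsilon}{k+\varepsilon}}$. Expanding this in $\varepsilon$ expresses $\mathscr A_{4,n}$ through Apéry-type sums weighted by polynomials in the harmonic numbers $H_{2k}^{(r)}$ and $H_k^{(r)}$, since $\Gamma(2x)/\Gamma(x)^2$ shifts by these.

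\textbf{Step 2: Clausen/log-sine form and level reduction.} The denominator $1-ut(1-t)$ factors over the cube roots of unity in suitable variables; this is exactly the source of the level-$3$ structure in Theorems \ref{thm:A1n} and \ref{thm:Asn}. Following the route used for Theorem \ref{thm:Asn}, I substitute $t(1-t)=\frac1{4\sin^2(\theta/2)}$-type variables. This yields
\[
\mathscr A_{4,n}=\sum c_j\int_0^{\pi/3}\theta\,\log^{a}\Bigl(2\sin\frac\theta2\Bigr)\cdots\D\theta ,
\]
a combination of generalized log-sine integrals of weight $n+4$ at $\pi/3$, i.e.\ multiple Clausen values, which Theorem \ref{thm:Asn} already places in $\mathfrak Z_{n+4}(3)$.

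The new content is the descent from level $3$ to level $1$. For this I use the specific shape of the $s=4$ kernel, namely the even power $\log^2$ combined with the factor $\theta$. Using the reflection $\theta\mapsto 2\pi-\theta$, the duplication $\theta\mapsto2\theta$ (distribution relations $\Li(z^2)$ versus $\Li(z)+\Li(-z)$), and the relation between $\int_0^{\pi/3}$ and $\int_0^{\pi}$, I aim to rewrite the $\pi/3$ integrals as log-sine integrals at $\pi$ (or $2\pi$) with only integer-level data. Those are known MZVs by Borwein--Straub.

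\textbf{Main obstacle.} The hardest step is showing that the $\chi_{-3}$-twisted components cancel identically for every $n$, not just in examples. My first attempt is a Galois-type argument. Write the expression as a polynomial in $\Li$'s at $\omega=e^{2\pi i/3}$ and show that the generating function in $\varepsilon$ is invariant under the combined symmetry $\omega\mapsto\omega^2$ together with the distribution relation of order $3$. Concretely, I hope to show that the generating function $\sum_k\frac{1}{(k+\varepsilon)^4\binom{2k+2\varepsilon}{k+\varepsilon}}$ equals a product/ratio of $\Gamma$-values at $1\pm\varepsilon$, $1\pm2\varepsilon$ times a hypergeometric ${}_pF_q$ at argument $1$, via a contiguous or Whipple-type transformation valid specifically for exponent $4$. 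Since such $\Gamma$-expansions and ${}_pF_q(1)$ with integer parameters expand in MZVs, this would finish the proof. If no such closed transformation exists, I fall back to the integral route in Step 2 and verify the level reduction through explicit CMZV-to-MZV descent for each weight using the parity structure of the kernel.
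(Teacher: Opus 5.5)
Your Step~1 is fine: $\mathscr A_{4,n}=\frac14\int_0^1\!\int_0^1\frac{\log^2(1/u)\,\log^n[ut(1-t)]}{1-ut(1-t)}\,\D u\,\D t$ is a correct representation. The gap is that the proposal never proves the actual content of the theorem, namely that the contributions of the letters $\varrho,\frac1\varrho$ cancel for $s=4$ and \emph{every} $n$. You flag this as the main obstacle, and none of the mechanisms you offer can close it.

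\emph{Conjugation.} Invariance under $\omega\mapsto\omega^2$ only encodes the reality of $\mathscr A_{s,n}$, which holds for every $s$. Real level-$3$ values need not be MZVs: $\mathscr A_{3,1}=-2(\I\mu_2)^2$ presumably is not one. A rigorous ``Galois'' argument would need the motivic coaction and a level-descent criterion, verified for all $n$.

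\emph{Parity.} Parity of the power of $\log(2\sin\frac\theta2)$, plus reflections and duplications, does not single out $s=4$. For $s=6$, \eqref{eq:AsLs} has the same shape $\theta\log^4(2\sin\frac\theta2)$, yet Table~\ref{tab:Asn} gives $\mathscr A_6=-\frac{4\pi\I\mu_{4,1}}{3}-\frac{4\zeta_3^2}{3}+\frac{13\cdot257\pi^6}{2^4\cdot3^7\cdot5\cdot7}$, with $\I\mu_{4,1}$ a (conjecturally) primitive MCV. An argument of that type would prove too much.

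\emph{Step~2.} This step is unsupported for $n\geq1$. The contour argument behind \eqref{eq:AsLs} uses the jump of $\Li_{s-1}$ across $(1,\infty)$. For $n\geq1$, the continuation $-n!\,G(\boldsymbol0_{s-2},1,\boldsymbol0_n;y)$ of $\sum_k\partial_x^n(y^x/x^{s-1})|_{x=k}$ has no cut there, while $\log^n$ brings its own cut on $y<0$. Also $t(1-t)\leq\frac14\leq\frac{1}{4\sin^2(\theta/2)}$, so your substitution is meaningless without such a deformation. The paper notes that this technique fails for $n>0$, and that $\mathscr A_{3,3}$ apparently lies outside the algebra of log-sine values at $\frac\pi3$. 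Even your intended endpoint is not level~$1$: $\Ls_n^{(k)}(\pi)$ with $k\geq1$ are alternating sums in general, e.g.\ $\Ls_4^{(1)}(\pi)$ involves $\Li_4(\frac12)$.

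\emph{Hypergeometric route.} Your series has term ratio tending to $\frac14$, and no transformation to argument~$1$ is given. A weight-by-weight fallback cannot cover all $n$.

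What is missing is an explicit, $s=4$-specific mechanism that removes the $\varrho$-letters for all $n$ at once. The paper uses $t\leftrightarrow1-t$ to write $\mathscr A_{4,n}=-\frac{n!}{3}\int_0^1\bigl(\frac1t+\frac2{1-t}\bigr)G(0,0,1,\boldsymbol0_n;t(1-t))\,\D t$. It then replaces the upper limit by $z$ and adds the counter-term $\mathscr C_{4,n}(z)$. This is a fixed rational combination of $G(\gamma_1,\gamma_2,\gamma_3,0,\boldsymbol1_n;1-z(1-z))$ with $\gamma_j\in\{z,1\}$.

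Since $1-z(1-z)-z=(1-z)^2$ and $1-z(1-z)-1=-z(1-z)$, the parameters $z,1$ generate only the letters $0,1$ in $z$. With the weights $1:2$ and the coefficients $7,-2,-1,-1,-3$, the $\varrho$-carrying integrand enters $\partial_z\mathscr G_{4,n}(z)$ only through $G(0,0,1,\boldsymbol0_n;u)-G(1,1,0,\boldsymbol1_n;1-u)$ with $u=z(1-z)$. That difference is a polynomial in $\log u$ with MZV coefficients. The remaining terms are GPLs at $1-z(1-z)$ with parameters in $\{z,1\}$ (after one more differentiation).

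Induction then gives $\mathscr G_{4,n}(z)\in\mathfrak f^{(z)}_{n+4}$. Finally, $\mathscr A_{4,n}=-\frac{n!}{3}\RE\bigl[\Reg_{z\to1\pm i0^+}\mathscr G_{4,n}(z)-\Reg_{z\to1\pm i0^+}\mathscr C_{4,n}(z)\bigr]$ is an MZV, once one checks separately that the regularized real part of the counter-term lies in $\mathfrak Z_{n+4}(1)$.
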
\begin{remark}Using Wilf--Zeilberger (WZ) pairs \cite{WZ1990},  Hou--Sun proved that (cf.\ \cite[Theorem 1.3]{HouSun2026})\begin{align}
\mathscr A_{2,n}\in{}&\mathfrak Z_{n+2}(1),\label{eq:A2nMZV}
\end{align} and that (cf.\ \cite[Theorem 1.4]{HouSun2026})  \begin{align}
\bar{\mathscr A}_{3,n}\colonequals \sum_{k=1}^\infty\left.\!\frac{\partial^n}{\partial x^n}\frac{(-1)^{x}}{x^3\binom{2x}{x}}\right|_{x=k}\equiv \sum_{k=1}^\infty\left.\!\frac{\partial^n}{\partial x^n}\frac{e^{\pi i x}[\Gamma(x)]^{2}}{2x^{2}\Gamma(2x)}\right|_{x=k}\in\mathfrak Z_{n+3}(1),
\end{align} the latter of which generalized Ap\'ery's formula  $ \bar{\mathscr A}_3\equiv \bar{\mathscr A}_{3,0}=-\frac{2\zeta_{3}}{5}$ [cf.\ \eqref{eq:A2A3}]. On Mar. 5, 2026, the MZV expressions for $ \mathscr A_{4,2},\dots,\mathscr A_{4,5}$ were announced as conjectures by the first-named author  in Question 508768 of  MathOverflow \cite{Sun2024}, which subsequently triggered Deyi Chen's empirical evaluation of $ \mathscr A_{4,6}$. However, there seem to be no ``naturally-occurring'' WZ pairs associated to  $ \mathscr A_{4,n}$, which partly explained the lack of  their proofs before the current work. In \S\ref{subsec:descClausen} of this article, we will treat \eqref{eq:A4nMZV} and  \eqref{eq:A2nMZV} in a unified framework that does not draw on WZ pairs.
\eor\end{remark}\begin{remark}Our demonstrations of Theorems \ref{thm:A1n}--\ref{thm:A2nA4n} will be both constructive and algorithmic, providing the readers with explicit  $ \mathbb Q$-linear combinations of multiple polylogarithms (MPLs) $\Li_{a_1,\dots,a_n}(z_1,\dots,z_n) $ whose arguments $ z_1,\dots,z_n$ are roots of unity.  With the aid of several software packages that  process   $ \mathbb Q$-linear relations among MPLs (to be elaborated in \S\ref{sec:evalClGl}), we may further reduce the MPL representations of $ \mathscr A_{1,n}$ and $ \mathscr A_{s,n}$ to the entries of Tables   \ref{tab:A1n}--\ref{tab:A2nA4n} (cf.\ \cite[Conjectures 4.1 and 4.3]{Sun2026dSum}). To save horizontal space in these tables, we employ the standard notation for multiple zeta values (MZVs)\begin{align}
  \zeta_{a_1,\dots,a_n}\colonequals {}&{\sum\limits_{\ell_{1}>\dots>\ell_{n}>0}\prod\limits_{j=1}^n\frac{{1}}{\ell_j^{a_j}}}\in\mathfrak Z_{a_1+\dots+a_n}(1)
\label{eq:MZV}\end{align} of weight $ a_1+\dots+a_n$ and depth $n$, as well as the following abbreviations for multiple Clausen values (MCVs)  \cite[\S2]{BorweinBroadhurstKamnitzer2001}{\allowdisplaybreaks
\begin{align}
\mu_{a_1,\dots,a_n}\colonequals {}&\sum_{\ell_{1}>\dots>\ell_{n+1}>0}\frac{e^{\ell_{1}\pi i/3}}{\ell_{1}^{a_{1}}}\prod\limits_{j=2}^{n}\frac{\smash[t]{1}}{\ell_j^{a_j}}\in \mathfrak Z_{a_{1}+\dots+a_n}(6)\label{eq:MCV_defn}
\end{align}}of weight $ a_1+\dots+a_n$ and depth $n$.   In \S\ref{subsec:descClausen}, we will sharpen \eqref{eq:MCV_defn} into $ \mu_{a_1,\dots,a_n}\in \mathfrak Z_{a_{1}+\dots+a_n}(3)$, so as to align all  the entries of Tables   \ref{tab:A1n} and \ref{tab:Asn} with the statements in   Theorems \ref{thm:A1n} and \ref{thm:Asn}.\eor\end{remark} \begin{table}[h]\caption{Closed forms for selected $ \mathscr A_{1,n}$\label{tab:A1n}}

{\footnotesize\begin{align*}\begin{array}{c|l}\hline\hline  n+1&\mathscr A_{1,n}\\\hline 1&\frac{\pi}{3\sqrt{3}}\\[5pt]2&-\frac{2\I\mu_2}{\sqrt{3}}\\[5pt]3&\frac{2^{2} \pi ^3}{3^{3} \sqrt{3}}\\[5pt]4&-\frac{2\I(2\cdot3^{2}\mu_4-\pi^{2}\mu_2)}{\sqrt{3}}\\[5pt]5&-\frac{2^{5}\cdot3\I(\mu_{4,1}+\zeta_3\mu_2)}{\sqrt{3}}+\frac{2\cdot67\pi^{5}}{3^{5}\sqrt{3}}\\[5pt]6&-\frac{2\cdot3\I(2^{3}\cdot3^{2}\cdot5\mu_6-2^{2}\cdot5\pi^{2}\mu_4-\pi^{4}\mu_{2})}{\sqrt{3}}\\[5pt]7&-\frac{2^{6}\cdot3^{3}\cdot5\I(\mu_{6,1}+\zeta_3\mu_4+\zeta_5\mu_2)}{\sqrt{3}}+\frac{2^{5}\cdot3\cdot5\pi^{2}\I(\mu_{4,1}+\zeta_3\mu_2)}{\sqrt{3}}+\frac{2^{2}\cdot5\cdot13^{2}\pi ^7}{3^{6}\sqrt{3}}\\[5pt]8&-\frac{2\I[2^{4}\cdot3^{4}\cdot5\cdot7\mu_{8}-2^{3}\cdot3\cdot5\cdot7\cdot23 \pi ^2\mu_{6}-2\cdot3^{4}\cdot7 \pi ^4 \mu_{4}+(2^{5}\cdot3^{3}\cdot5\cdot7 \zeta_3^2{}-79 \pi ^2\cdot3)\mu_{2}] }{3 \sqrt{3}}-\frac{2^{7}\cdot3^{2}\cdot5\cdot7\I(\mu_{6,1,1}+\zeta_{3}\mu_{4,1})}{\sqrt{3}}\\&{}-\frac{2^{2}\cdot5\cdot7 \pi  \left(3\cdot5\cdot659 \zeta _{7}+2^{3}\cdot3^{3}\cdot7 \pi ^2\zeta_{5}+2^{2}\cdot17 \pi ^4 \zeta_{3}\right)}{3^{4} \sqrt{3}}\\[4pt]\hline\hline\end{array}\end{align*}}
\end{table}

\begin{table}[h]\caption{\label{tab:Asn} Closed forms for selected $ \mathscr A_{s,n}$ }
{\scriptsize\begin{align*}\begin{array}{c|llll}\hline\hline  w&\mathscr A_{3,w-3}&\mathscr A_{5,w-5}&\mathscr A_{6,w-6}&\mathscr A_{7,w-7}
\\\hline 3&-\frac{2^{2}\zeta_{3}}{3} +\frac{2 \pi \I\mu_2}{3}&-\!\!-&-\!\!-&-\!\!-\\[5pt]4&-2( \I\mu_2)^{2}&-\!\!-&-\!\!-&-\!\!-\\[5pt]5&-\frac{2^{2}\cdot11 \zeta_5}{3^{2}}+\frac{2^{2}\pi ^2 \zeta _{3}}{3^{2}}+\frac{2^{3} \pi ^3 \I\mu_2}{3^{3}}&-\frac{19\zeta _{5}}{3}+2\pi  \I\mu _{4}+\frac{\pi ^{2} \zeta _{3}}{3^{2}}&-\!\!-&-\!\!- \\[5pt]6&-2^{3}\cdot3^{2}(\RE\mu _{4,2}+\I\mu_4 \I\mu_2)&2^{2}\cdot3\RE\mu _{4,2}&-\frac{2^{2} \pi  \I\mu _{4,1}}{3}&-\!\!-\\&{}+2^{4} \zeta _3^2+2 \pi ^2 ( \I\mu_2)^{2}-\frac{6691 \pi ^6}{2^{3}\cdot3^{6}\cdot5\cdot7}&{}-\frac{2^{4} \zeta _{3}^2}{3}+\frac{6691\pi ^{6}}{2^{4}\cdot3^{7}\cdot5\cdot7}&{}-\frac{2^{2} \zeta _{3}^2}{3}+\frac{13\cdot257\pi ^{6}}{2^{4}\cdot3^{7}\cdot5\cdot7}\\[5pt]7& -\frac{6037\zeta _{7}}{3^{3}}-2^{6}\cdot3\RE\mu _{4,1,2}&-\frac{1423\zeta _{7}}{2^{2}\cdot3^{2}}&-\frac{6037\zeta _{7}}{2^{3}\cdot3^{4}}-2^{3}\RE\mu _{4,1,2}&-\frac{17\cdot29\zeta _{7}}{2^{3}\cdot3}\\&{}+2^{5}\cdot5\pi  \I\mu _{6}+\frac{2^{2}\cdot127\pi ^2\zeta _{5}}{3^{3}}&{}+\frac{19\pi ^{2} \zeta _{5}}{3^{2}}&{}+\frac{2^{2}\cdot5\pi  \I\mu _{6}}{3}+\frac{61\pi ^{2} \zeta _{5}}{2\cdot3^{4}}&{}+2\cdot3\pi  \I\mu _{6}\\&{}-2^{6}\cdot3\I\mu _{2} \I\mu _{4,1} -\frac{2^{4} \pi ^3\I\mu _{4}}{3^{3}}&{}+\frac{2^{3} \pi ^{3} \I\mu _{4}}{3^{2}}&{}-\frac{2\pi ^{3} \I\mu _{4}}{3^{4}}&{}+\frac{\pi ^{2} \zeta _{5}}{3}\\&{}-2^{5}\cdot3\zeta _{3} (\I\mu _{2})^2-\frac{2\cdot491\pi ^4\zeta _{3}}{3^{4}\cdot5}&{}+\frac{31\pi ^{4} \zeta _{3}}{2\cdot3^{3}\cdot5} &{}-\frac{653\pi ^{4} \zeta _{3}}{2^{2}\cdot3^{5}\cdot5}&{}+\frac{17\pi ^{4} \zeta _{3}}{2^{2}\cdot3^{4}\cdot5}\\&{}+\frac{2^{2}\cdot67\pi ^5\I\mu _{2}}{3^{5}}\\[5pt]8&-2^{5}\cdot3^{3}\cdot5(\RE\mu _{6,2}+\I\mu _{6} \I\mu _{2})&-2^{2}\cdot3^{3}(\I\mu _{4})^2&-\frac{2^{4} \pi ^{3} \I\mu _{4,1}}{3^{3}}&2^{2}\cdot3^{2}\RE\mu _{6,2}\\&{}-\frac{2^{8}\cdot5\zeta _{5,3}}{3}+2^{5}\cdot5\cdot19\zeta_{5} \zeta_{3}&{}-2^{2}\cdot19\zeta _{5}\zeta _{3}&{}-\frac{2^{3}\cdot11\zeta _{5,3}}{3^{2}\cdot5}&{}+\frac{2^{5} \zeta _{5,3}}{3^{2}}\\&{}+2^{4}\cdot3\cdot5\pi ^2(\RE\mu _{4,2}+\I\mu _{4} \I\mu _{2})&{}-2^{2}\cdot3\pi ^{2} \RE\mu _{4,2}&{}-\frac{2^{3}\cdot11\zeta _{5} \zeta _{3}}{3^{2}}&{}-\frac{2^{3}\cdot43\zeta _{5}\zeta _{3} }{3^{2}}\\&{}-\frac{2^{5}\cdot5\pi ^2\zeta _{3}^2}{3}+2\cdot3\pi ^4(\I\mu _{2})^2&{}+\frac{2^{4} \pi ^{2} \zeta _{3}^2}{3}&{}+\frac{2^{2} \pi ^{2} \zeta _{3}^2}{3^{2}}&{}+\frac{679153\pi ^{8}}{2^{7}\cdot3^{8}\cdot5^{2}\cdot7}\\&{}-\frac{443\cdot1231\pi ^8}{2^{4}\cdot3^{7}\cdot5\cdot7}&{}-\frac{6691\pi ^{8}}{2^{4}\cdot3^{7}\cdot5\cdot7}&{}+\frac{3970097\pi ^{8}}{2^{6}\cdot3^{9}\cdot5^{3}\cdot7}\\[2pt]\hline\hline\end{array}\end{align*}}\end{table}\begin{table}[h]\caption{\label{tab:A2nA4n} Closed forms for selected $ \mathscr A_{2,n}$ and $ \mathscr A_{4,n}$ }{\footnotesize\begin{align*}\begin{array}{c|ll}\hline\hline  w&\mathscr A_{2,w-2}&\mathscr A_{4,w-4}\\\hline2&\frac{\pi^2}{2\cdot3^{2}}&-\!\!-\\[3pt]3&-\frac{2^{2}\zeta_{3}}{3}&-\!\!-\\[3pt]4&\frac{31 \pi ^4}{2^{2}\cdot3^{3}\cdot5}&\frac{17 \pi ^4}{2^{3}\cdot3^{4}\cdot5}\\[3pt]5&-2\cdot19\zeta _{5}+\frac{2^{2} \pi ^2\zeta_{3}}{3}&-\frac{2\cdot11\zeta _{5}}{3^{2}}\\[3pt]6&-2^{5}\zeta _{3}^2+\frac{11\cdot89\pi ^6}{2\cdot3^{4}\cdot5\cdot7}&\frac{439\pi^{6}}{2^{3}\cdot3^{6}\cdot5}\\[3pt]7&-5\cdot17\cdot29\zeta _{7}+\frac{2^{2}\cdot5\cdot19\pi ^2\zeta_{5}}{3}+2^{2}\pi ^4\zeta _{3}&-\frac{1423\zeta_{7}}{2^{2}\cdot3}+\frac{2\cdot11\pi ^{2} \zeta_{5}}{3^{2}}\\[3pt]8&2^{5}\cdot3\cdot7\zeta_{5,3}-2^{7}\cdot3^{2}\cdot5\zeta _{5}\zeta _{3}+2^{5}\cdot5\pi^2\zeta _{3}^{2}+\frac{11\cdot13\cdot829\pi ^{8}}{2^{3}\cdot3^{4}\cdot5^{2}\cdot7}&\frac{2^{5}\cdot11\zeta_{5,3}}{3\cdot5}+\frac{23\cdot32719\pi^{8}}{2^{4}\cdot3^{6}\cdot5^{3}\cdot7}\\ [3pt]9&-\frac{2\cdot5\cdot7\cdot13921\zeta _{9}}{3}+5\cdot7\cdot17\cdot29\pi ^{2} \zeta _{7}+2\cdot3\cdot7\cdot19\pi ^{4} \zeta _{5}&{-\frac{2\cdot5\cdot7\cdot13\cdot311\zeta _{9}}{3^{3}}+\frac{5\cdot1423\pi ^{2} \zeta _{7}}{2\cdot3^{2}}+\frac{2\cdot11\pi ^{4} \zeta _{5}}{3}}\\&{}-2^{7}\cdot5\cdot7\zeta _{3}^3+\frac{2^{2}\cdot79\pi ^{6} \zeta _{3}}{3^{2}}\\[3pt]10&2^{4}\cdot3\cdot5^{2}\cdot41\zeta _{7,3}-2^{7}\cdot7^{2}\pi ^{2} \zeta _{5,3}-2^{10}\cdot3^{3}\cdot5\cdot7\zeta _{7} \zeta _{3}&{\frac{2\cdot5\cdot1423\zeta _{7,3}}{7}-\frac{2^{5}\cdot11\pi ^{2} \zeta _{5,3}}{3}}\\&{}-2^{4}\cdot3^{2}\cdot5\cdot859\zeta _{5}^2+2^{9}\cdot3\cdot5\cdot7\pi ^{2} \zeta _{5} \zeta _{3}&{}+\frac{2\cdot3\cdot5\cdot191\zeta _{5}^2}{7}\\&{}+2^{6}\cdot3\cdot7\pi ^{4} \zeta _{3}^2+\frac{17^{2}\cdot15583\pi ^{10}}{2\cdot3^{4}\cdot5^{2}\cdot7\cdot11}&{}+\frac{937\cdot10567\pi ^{10}}{2^{4}\cdot3^{4}\cdot5^{2}\cdot7^{2}\cdot11}\\[3pt]11&-3^{3}\cdot7\cdot13\cdot79\cdot653\zeta _{11}+2^{9}\cdot3^{3}\cdot7^{2}\zeta _{5,3,3}&-\frac{7\cdot1567\cdot1693\zeta _{11}}{2^{3}}+2^{7}\cdot7\cdot11\zeta _{5,3,3}\\&{}+2^{3}\cdot5^{2}\cdot7\cdot11^{2}\cdot53\pi ^{2} \zeta _{9}+\frac{2\cdot3^{2}\cdot7\cdot8291\pi ^{4} \zeta _{7}}{5}&{}+\frac{2\cdot5\cdot7\cdot56813\pi ^{2} \zeta _{9}}{3^{3}}+\frac{7\cdot37\cdot653\pi ^{4} \zeta _{7}}{2^{2}\cdot3\cdot5}\\&{}-2^{10}\cdot3^{4}\cdot5\cdot7\zeta _{5} \zeta _{3}^2+2^{3}\cdot1277\pi ^{6} \zeta _{5}&{}+\frac{2\cdot11\cdot47\pi ^{6} \zeta _{5}}{3^{3}}\\&{}+2^{9}\cdot3\cdot5\cdot7\pi ^{2} \zeta _{3}^3+\frac{2^{2}\cdot2339\pi ^{8} \zeta _{3}}{3\cdot5}\\[3pt]12&2^{6}\cdot3^{3}\cdot5^{2}\cdot7\cdot23\zeta _{9,3}+2^{9}\cdot3^{3}\cdot5^{2}\cdot7\zeta _{4,4,2,2}&2^{5}\cdot3\cdot5\cdot7\cdot79\zeta _{9,3}-2^{6}\cdot5\cdot7\cdot23\zeta _{4,4,2,2}\\&{}-2^{4}\cdot3^{4}\cdot5^{2}\cdot37\pi ^{2} \zeta _{7,3}-2^{8}\cdot3\cdot5^{2}\cdot7\cdot1987\zeta _{9} \zeta _{3}&{}-2^{3}\cdot5\cdot229\pi ^{2} \zeta _{7,3}-\frac{2^{5}\cdot5\cdot7\cdot23\cdot53\zeta _{9} \zeta _{3}}{3^{2}}\\&{}+2^{6}\cdot3\cdot7^{2}\cdot11\pi ^{4} \zeta _{5,3}-2^{7}\cdot3^{3}\cdot5^{2}\cdot7\cdot479\zeta_{7} \zeta_{5}&{}-\frac{2^{5}\cdot7\cdot17\cdot59\pi ^{4} \zeta _{5,3}}{3^{2}\cdot5}-2^{6}\cdot5\cdot7\cdot863\zeta_{7} \zeta_{5}\\&{}+2^{11}\cdot3^{2}\cdot5^{2}\cdot7\pi ^{2} \zeta _{7} \zeta _{3}+2^{4}\cdot3^{2}\cdot5^{2}\cdot29\cdot37\pi ^{2} \zeta _{5}^2&{}+\frac{2^{7}\cdot5\cdot7^{2}\cdot23\pi ^{2} \zeta _{7} \zeta _{3}}{3}+\frac{2^{3}\cdot5^{3}\cdot17\cdot19\pi ^{2} \zeta _{5}^2}{3}\\&{}+2^{9}\cdot3\cdot5\cdot7\cdot31\pi ^{4} \zeta _{5} \zeta _{3}-2^{9}\cdot3^{3}\cdot5^{2}\cdot7\zeta _{3}^4&{}-\frac{2^{5}\cdot5\cdot7^{2}\cdot23\pi ^{4} \zeta _{5} \zeta _{3}}{3^{2}}+\frac{2^{6}\cdot5\cdot7\cdot23\zeta _{3}^4}{3}\\&{}+\frac{2^{6}\cdot5\cdot7\cdot43\pi ^{6} \zeta _{3}^2}{3}+\frac{1002151729\pi ^{12}}{2^{2}\cdot3^{3}\cdot5^{2}\cdot7\cdot11\cdot13}&{}-\frac{2^{9}\cdot23\pi ^{6} \zeta _{3}^2}{3^{4}}+\frac{16476625657\pi ^{12}}{2^{3}\cdot3^{7}\cdot5^{3}\cdot7\cdot11\cdot13}\\\hline\hline\end{array}\end{align*}}\end{table}\begin{remark}Some entries of  Tables   \ref{tab:A1n}--\ref{tab:A2nA4n} were reported in the existing literature. While  $ \mathscr A_2=\frac{\zeta_2}{3}$ was found in Ap\'ery's work  \cite{Apery1978,Poorten1978}, we  point out that both $ \mathscr A_1=\frac{\pi}{3\sqrt{3}}$ and $ \mathscr A_2=\frac{\pi^2}{18}$ are special cases of the following classical hypergeometric summations for $ x\in[0,4)$:\begin{align}
\sum_{k=1}^\infty\frac{x^k}{k\binom{2k}{k}}={}&\frac{2 \sqrt{x} \arcsin\frac{\sqrt{x}}{2}}{\sqrt{4-x}},& \sum_{k=1}^\infty\frac{x^k}{k^{2}\binom{2k}{k}}={}&2\left(\arcsin\frac{\sqrt{x}}{2}\right)^{2}.
\end{align}
As noted by Zucker \cite{Zucker1985}, the formula $\mathscr A_4=\frac{17\zeta_4}{36}=\frac{17\pi^{4}}{3240} $ was derived by Cohen \cite[Corollaire 5.3]{Cohen1981} using a  method that did not depend on    \eqref{eq:AsLs}. The computation of $ \mathscr A_4$ via     \eqref{eq:AsLs} was highlighted in van der Poorten's foreword for Lewin's book \cite{Lewin1981}.      Sun \cite{Sun2015} had conjectured (equivalent forms of) $ \mathscr A_{2,1}=-\frac{4\zeta_{3}}{3}$ and  $ \mathscr A_{4,1}=-\frac{22\zeta_5}{9}$ before they  were proved by Ablinger \cite{Ablinger2017}. In the statement of  \cite[Theorem 1.3]{HouSun2026}, one may also find explicit MZV representations\  for $ \mathscr A_{2,0},\dots,\mathscr A_{2,8}$. \eor\end{remark}\begin{remark}Our algorithmic approach to Table  \ref{tab:Asn}  also enables us to make partial progress towards some open questions surrounding MCVs. For example, K. C. Au \cite[Conjecture 1.5(b)]{Au2022a} proposed an empirical sum rule\begin{align}
\mathsf A_{w,n}\colonequals \sum_{\substack{a_1,\dots,a_n\in\mathbb Z_{>0}\\a_1+\dots+a_n=w}}\RE\mu_{a_1,\dots,a_n}\overset?\in\mathfrak Z_w(1)\label{eq:Au1.5b}
\end{align} for each fixed pair of weight $ w\in\mathbb Z_{>0}$ and depth $ n\in\mathbb Z\cap[1,w]$. While Au was able to verify particular cases of his conjecture with provable algebraic relations for $ \mathfrak Z_w(6),w\in\{1,2,3,4,5\}$ \cite{Au2022a}, our analysis in  \S\S\ref{subsec:fibLyn}--\ref{subsec:algLyndonMCV} will push this up to weight  $8$. We summarize  the corresponding results in Table \ref{tab:Au1.5b}.     
\eor\end{remark}

\begin{table}[h]\caption{Provable cases of Au's sum rule for MCVs\label{tab:Au1.5b}}
\begin{minipage}{.3\textwidth}{\footnotesize \begin{align*}\begin{array}{cc|l}\hline\hline w&n& \mathsf A_{w,n}  \\\hline1&1&0\\\hline2&1&\frac{\pi ^{2}}{2^{2}\cdot3^{2}}\\&2&-\frac{\pi ^{2}}{2\cdot3^{2}}\\[2pt]\hline3&1&\frac{\zeta _{3}}{3}\\&2&-\frac{2\zeta _{3}}{3}\\&3&0\\\hline 4&1&\frac{7\cdot13\pi ^{4}}{2^{4}\cdot3^{5}\cdot5}\\&2&-\frac{59\pi ^{4}}{2^{4}\cdot3^{4}\cdot5}\\&3&-\frac{\pi ^{4}}{2^{4}\cdot3^{4}}\\&4&\frac{\pi ^{4}}{2^{3}\cdot3^{5}}\\[2pt]\hline5&1&\frac{5^{2} \zeta _{5}}{2\cdot3^{3}}\\&2&-\frac{43\zeta _{5}}{2^{2}\cdot3^{2}}+\frac{\pi ^{2} \zeta \
_{3}}{2^{2}\cdot3^{2}}\\&3&\frac{29\zeta _{5}}{2^{2}\cdot3^{2}}-\frac{\pi ^{2} \zeta _{3}}{2^{2}\
\cdot3}\\&4&-\frac{29\zeta _{5}}{2\cdot3^{3}}+\frac{\pi ^{2} \zeta 
_{3}}{2\cdot3^{2}}\\&5&0\\\hline6&1&\frac{11^{2}\cdot31\pi ^{6}}{2^{5}\cdot3^{8}\cdot5\cdot7}\\&2&\frac{\zeta _{3}^2}{2\cdot3}-\frac{2203\pi 
^{6}}{2^{3}\cdot3^{8}\cdot5\cdot7}\\&3&-\frac{\zeta _{3}^2}{2}+\frac{31\cdot127\pi \
^{6}}{2^{5}\cdot3^{8}\cdot5\cdot7}\\&4&\frac{\zeta _{3}^2}{3}-\frac{331\pi ^{6}}{2^{2}\cdot3^{8}\cdot5\cdot7}\\&5&\frac{\pi ^{6}}{2^{5}\cdot3^{8}}\\&6&-\frac{\pi ^{6}}{2^{4}\cdot3^{8}\cdot5}\\[2pt]\hline\hline\end{array}\end{align*}}\end{minipage}\begin{minipage}{.5\textwidth}{\footnotesize \begin{align*}\begin{array}{cc|l}\hline\hline w&n& \mathsf A_{w,n}  \\\hline 7&1&\frac{7^{2}\cdot13\zeta _{7}}{2^{4}\cdot3^{4}}\\&2&-\frac{4481\zeta _{7}}{2^{5}\cdot3^{4}}+\frac{\pi ^{2} \zeta 
_{5}}{2^{2}\cdot3^{2}}+\frac{7\cdot13\pi ^{4} \zeta 
_{3}}{2^{4}\cdot3^{5}\cdot5}\\&3&\frac{3229\zeta _{7}}{2^{4}\cdot3^{4}}-\frac{\pi ^{2} \zeta 
_{5}}{3^{2}}-\frac{67\pi ^{4} \zeta _{3}}{2^{2}\cdot3^{5}\cdot5}\\&4&-\frac{3251\zeta _{7}}{2^{4}\cdot3^{4}}+\frac{\pi ^{2} \zeta 
_{5}}{2\cdot3}+\frac{\pi ^{4} \zeta _{3}}{2^{3}\cdot3\cdot5}\\&5&\frac{5\cdot659\zeta _{7}}{2^{5}\cdot3^{4}}-\frac{5\pi ^{2} \zeta 
_{5}}{2^{2}\cdot3^{2}}+\frac{5\pi ^{4} \zeta _{3}}{2^{4}\cdot3^{5}}\\&6&-\frac{659\zeta _{7}}{2^{4}\cdot3^{4}}+\frac{\pi ^{2} \zeta 
_{5}}{2\cdot3^{2}}-\frac{\pi ^{4} \zeta _{3}}{2^{3}\cdot3^{5}}\\&7&0\\\hline8&1&\frac{127\cdot1093\pi ^{8}}{2^{8}\cdot3^{10}\cdot5^{2}\cdot7}\\&2&\frac{7\zeta _{5,3}}{2\cdot3^{3}\cdot5}+\frac{5^{2} \zeta _{5} \zeta 
_{3}}{2\cdot3^{3}}-\frac{13\cdot43\cdot3709\pi ^{8}}{2^{8}\cdot3^{10}\
\cdot5^{3}\cdot7}\\&3&-\frac{7\zeta _{5,3}}{2^{2}\cdot3^{3}}-\frac{179\zeta _{5} \zeta 
_{3}}{2^{2}\cdot3^{3}}+\frac{ \pi ^{2} \zeta 
_{3}^2}{2^{3}\cdot3^{2}}+\frac{37501\pi 
^{8}}{2^{6}\cdot3^{9}\cdot5^{2}\cdot7}\\&4&2\zeta _{5} \zeta _{3}-\frac{ \pi ^{2} \zeta 
_{3}^2}{2\cdot3^{2}}-\frac{353\cdot1217\pi 
^{8}}{2^{8}\cdot3^{10}\cdot5^{2}\cdot7}\\&5&\frac{7\zeta _{5,3}}{2^{2}\cdot3^{3}}-\frac{5\cdot29\zeta _{5} \zeta 
_{3}}{2^{2}\cdot3^{3}}+\frac{5\pi ^{2} \zeta 
_{3}^2}{2^{3}\cdot3^{2}}+\frac{48593\pi 
^{8}}{2^{6}\cdot3^{10}\cdot5^{2}\cdot7}\\&6&-\frac{7\zeta _{5,3}}{2\cdot3^{3}\cdot5}+\frac{29\zeta _{5} \zeta 
_{3}}{2\cdot3^{3}}-\frac{ \pi ^{2} \zeta 
_{3}^2}{2^{2}\cdot3^{2}}-\frac{17\cdot19\cdot401\pi 
^{8}}{2^{8}\cdot3^{9}\cdot5^{3}\cdot7}\\&7&-\frac{\pi ^{8}}{2^{8}\cdot3^{10}\cdot5}\\&8&\frac{\pi ^{8}}{2^{7}\cdot3^{10}\cdot5\cdot7}\\[2pt]\hline\hline\\[68pt]\end{array}\end{align*}}\end{minipage}
\end{table}
 
  In \S\ref{sec:discussion}, we will discuss some integrals and series related to Theorems \ref{thm:A1n}--\ref{thm:A2nA4n}. Among other things, we will offer a new derivation of   \eqref{eq:AsLs}, verify a few special cases of Broadhurst's conjectures \cite[\S3]{Broadhurst2014MDV}, and  produce closed-form evaluations of  \begin{align}
\sum_{k=0}^\infty\left.\!\frac{\partial^n}{\partial x^n}\frac{10x-1}{\binom{4x}{2x}}\right|_{x=k}\label{eq:4k2k}
\end{align}for $ n\in\{0,1,2,3,4\}$, extending the $ n=0$ case treated by Sun  \cite[(1.28)]{Sun2024}.  \section{CMZV structures of deformed Ap\'ery-like series\label{sec:CMZVstruct}}
In \S\ref{subsec:GPLrecap}, we recapitulate some fundamental properties of generalized polylogarithms (GPLs), to pave the way for a constructive approach to (slightly weaker forms of)   Theorems \ref{thm:A1n}--\ref{thm:Asn} in \S\ref{subsec:Z(6)}.  Here, for ``weaker forms'', we mean to relax the right-hand sides of \eqref{eq:A1nCMZV3} and \eqref{eq:AsnCMZV3} to larger $\mathbb Q $-vector spaces $ i\sqrt{3}\mathfrak Z_{n+1}(6)$ and $\mathfrak Z_{n+s}(6)$, respectively. \subsection{Recursions and shuffles of GPLs\label{subsec:GPLrecap}}The GPLs are   defined through the recursion \cite[(2.1)]{Frellesvig2016} \begin{align}\notag\\[-12pt]
G(\alpha_{1},\dots,\alpha_n;z)\colonequals{\displaystyle\int_0^z\frac{G(\alpha_2,\dots,\alpha_n;x)\D x}{x-\alpha_1}},\quad\text{for }\displaystyle\smash[t]{\sum_{k=1}^n|}\alpha_k|\neq0,\label{eq:GPL_rec}
\end{align}equipped with the initial condition \cite[(2.2)]{Frellesvig2016}\begin{align}
{G(\boldsymbol0_n;z)\equiv G(\underset{n }{\underbrace{0,\dots,0 }};z)}\colonequals{\dfrac{\log^nz}{n!}},\quad G(-\!\!-;z)\colonequals1.\label{eq:GPL0}
\end{align}From the recursive definition in  \eqref{eq:GPL_rec}, one may verify the following scaling property \cite[(2.3)]{Frellesvig2016}
\begin{align}
G(\alpha_{1},\dots,\alpha_n;z)=G(\sigma\alpha_{1},\dots,\sigma\alpha_n;\sigma z)\label{eq:GPLscaling}
\end{align}where $ \alpha_n\neq0$ and $\sigma\neq0 $.

 The MPLs, being the analytic continuations of the convergent series\begin{align}
\Li_{a_1,\dots,a_n}(z_1,\dots,z_n)\colonequals \sum_{\ell_{1}>\dots>\ell_{n}>0}\prod_{j=1}^n\frac{z_{j}^{\ell_{j}}}{\ell_j^{a_j}},
\label{eq:Mpl_defn}\end{align}
are related to GPLs by (see \cite[(2.5)]{Frellesvig2016} or \cite[(1.3)]{Panzer2015}){\begin{align} \begin{split}\Li_{a_1,\dots,a_n}(z_1,\dots,z_n)={}&(-1)^{n}G\left(\smash[b]{\underset{a_1-1 }{\underbrace{0,\dots,0 }}},\frac{1}{z_{1}},\smash[b]{\underset{a_2-1 }{\underbrace{0,\dots,0 }}},\frac{1}{z_{1}z_2},\dots,\smash[b]{\underset{a_n-1 }{\underbrace{0,\dots,0 }}},\frac{1}{\prod_{j=1}^nz_j};1\right)\\[12pt]\equiv{}&(-1)^nG\left(  \boldsymbol0_{a_{1}-1},\frac{1}{z_{1}},\boldsymbol0_{a_{2}-1},\frac{1}{z_{1}z_2},\dots,\boldsymbol0_{a_{n}-1},\frac{1}{\prod_{j=1}^nz_j};1\right),
\end{split}\label{eq:MPL_GPL}\\[-6pt]\notag\end{align}}so long as  $ \prod_{j=1}^nz_j\neq0$.  In particular, we have\begin{align}
G(\alpha;t)=-\Li_1\left( \frac{t}{\alpha} \right)=\log\left( 1-\frac{t}{\alpha} \right)\label{eq:G(a;t)}
\end{align}for $ \alpha\neq 0$. By convention, an MPL $ \Li_{a_1,\dots,a_n}(z_1,\dots,z_n)$ is said to have weight $ a_1+\cdots+a_n$ and depth $n$, while a GPL $G(\alpha_1,\dots,\alpha_w;z)$ is said to have weight $w$.
In view of the conversions between GPLs and MPLs, one can check that (cf.\ \cite[(2.24)]{Zhou2022mkMpl}) \begin{align}
\mathfrak Z_{w}(N)\colonequals\Span_{\mathbb Q}\left\{G(\alpha_1,\dots,\alpha_w;1)\middle|\begin{smallmatrix}\alpha_{1}^{N},\dots,\alpha_{w}^N\in\{0,1\}\\\alpha_1\neq1,\alpha_{w}\neq0\end{smallmatrix} \right\}\label{eq:Zk(N)'}\tag{\ref{eq:Zk(N)}$'$}
\end{align} is equivalent to \eqref{eq:Zk(N)}. 

The GPLs satisfy  \cite[(2.4)]{Frellesvig2016} \begin{align}\label{eq:GPL_shuffle}G(\alpha_{1},\dots,\alpha_{j};z)G(\beta_{1},\dots,\beta_{k};z)=\sum_{\bm\gamma\in\bm \alpha\Shf\bm \beta }G(\gamma_{1},\dots,\gamma_{j+k}; z)\end{align}where the set $ \bm \alpha\Shf\bm \beta$ exhausts the shuffles of the vector components in  $ \bm \alpha$ and $\bm \beta$   that preserve the internal orders within $ \bm \alpha$ and $\bm \beta$. In particular, the following formula (cf.\ \cite[(3.4)]{Zhou2023SunCMZV})\begin{align}\begin{split}
G(\alpha;t)G(\beta_1,\dots,\beta_k;t)={}&G(\alpha,\beta_1,\dots,\beta_k;t)+\sum_{j=1}^{k-1}G(\beta_{1},\dots,\beta_j,\alpha,\beta_{j+1},\dots ,\beta_k;t)\\{}&+G(\beta_1,\dots,\beta_k,\alpha;t)\label{eq:ShGPL}
\end{split} \end{align}can be abbreviated into a statement about formal words:\begin{align}
\begin{split}\alpha\shf\beta_1\cdots\beta_k={}&\alpha\beta_1\cdots\beta_k+\sum_{j=1}^{k-1}\beta_1\cdots\beta_j\alpha\beta_{j+1}\cdots\beta_k\\&{}+\beta_1\cdots\beta_k\alpha.
\end{split}\tag{\ref{eq:ShGPL}$'$}
\end{align}  Applying the GPL shuffles to \eqref{eq:Zk(N)'}, one can prove Goncharov's filtration $ \mathfrak Z_j(N)\mathfrak Z_k(N)\subseteq\mathfrak Z_{j+k}(N)$ \cite[\S1.2]{Goncharov1998}, namely, $ z_j\in\mathfrak Z_j(N)$ and $ z_k\in\mathfrak Z_k(N)$ imply that $z_jz_k\in\mathfrak Z_{j+k}(N) $. 
By unshuffling (cf.\ \cite[(4.7)]{DuhrDulat2019}), one can always convert a GPL $ G(\alpha_{1},\dots,\alpha_{n};z)$ (which potentially contain trailing zeros in the parameters $ \alpha_{1},\dots,\alpha_{n}$) to a polynomial of $ \log z$ and GPLs whose parameters do not contain trailing zeros. Combining the GPL (un)shuffles with  the well-known fact that  $ \pi i\in \mathfrak Z_1(N)$ for  $ N\in\mathbb Z_{\geq3}$ \cite[Remark 1.3]{SingerZhao2020}, we have \begin{align}
\mathfrak Z_{w}(N)\colonequals\Span_{\mathbb Q}\left\{G(\alpha_1,\dots,\alpha_w;z)\middle|\begin{smallmatrix}\alpha_{1}^{N},\dots,\alpha_{w}^N,z^{N}\in\{0,1\}\\\alpha_1\neq z\end{smallmatrix} \right\}\tag{\ref{eq:Zk(N)}$''$}\label{eq:Zk(N)''}
\end{align}for all  $ N\in\mathbb Z_{\geq3}$.  \subsection{CMZV characterizations\label{subsec:Z(6)}}Starting from the beta integral\begin{align}
\frac12\int_0^1[t(1-t)]^{x-1}\D t=\frac{1}{x\binom{2x}{x}}\equiv\frac{[\Gamma(x)]^{2}}{2\Gamma(2x)}
\end{align}for $ x>0$, we may establish the following  representations:
\begin{align}
\mathscr A_{1,n}={}&\frac{1}{2}\int_0^1\frac{\log^n[t(1-t)]}{1-t(1-t)}\D t,\label{eq:A1n_int}\\\mathscr A_{s,n}={}&\frac{1}{2}\int_0^1\sum_{k=1}^\infty\left.\!\frac{\partial^{n}}{\partial x^{n}}\frac{[t(1-t)]^{x-1}}{x^{s-1}}\right|_{x=k}\D t,\label{eq:Asn_int}
\end{align}where $ s\in\mathbb Z_{>1}$ and $ n\in\mathbb Z_{\geq0}$.

The recursions and shuffles of GPLs  lead us to the next two propositions, which in turn, refine (the weaker forms of) Theorems \ref{thm:A1n}--\ref{thm:Asn} with quantitative details.\begin{proposition}For all $ n\in\mathbb Z_{>0}$, we have \begin{align}
&\log^n(t(1-t))=[\log t+\log(1-t)]^{n}=n!\sum_{\alpha_i\in\{0,1\}}G(\alpha_1,\dots,\alpha_n;t)
\label{eq:log_pow}\end{align}for $ t\in(0,1)$, and\begin{align}\begin{split}
\mathscr A_{1,n}={}&\frac{n!}{\sqrt{3}}\I\sum_{\alpha_i\in\{0,1\}}G(\varrho,\alpha_1,\dots,\alpha_n;1)\in  i\sqrt{3}\mathfrak Z_{n+1}(6),
\end{split}\label{eq:A1n_Z(6)}
\end{align}where $ \varrho\colonequals e^{\pi i/3}$.
The last equation applies retroactively to \begin{align}
\mathscr A_1=\frac{0!}{2\sqrt{3}i}\left[G(\varrho;1)-G\left(\frac1\varrho;1\right)\right]=\frac{\I G(\varrho;1)}{\sqrt{3}}=\frac{\pi}{3\sqrt{3}},\label{eq:A10}
\end{align}whereas the formal word $ \alpha_1\cdots\alpha_0$ becomes the empty string. \end{proposition}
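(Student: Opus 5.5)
For \eqref{eq:log_pow}, I would start from \eqref{eq:G(a;t)} and \eqref{eq:GPL0}, which give $G(0;t)=\log t$ and $G(1;t)=\log(1-t)$, so $\log t+\log(1-t)=G(0;t)+G(1;t)$. Raising this to the $n$-th power and expanding, each monomial is a product of single-letter GPLs. By the shuffle product \eqref{eq:GPL_shuffle}, the $n$-fold shuffle of single letters $\beta_1\shf\cdots\shf\beta_n$ equals the sum over all $n!$ orderings of the multiset $\{\beta_1,\dots,\beta_n\}$. Grouping by ordered words, each word $\alpha_1\cdots\alpha_n\in\{0,1\}^n$ then appears with coefficient exactly $n!$. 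A quick sanity check is to compare both sides via $\frac{d}{dt}$ using \eqref{eq:GPL_rec}. The derivative of the right side is $\frac{1}{t}+\frac{1}{t-1}$ times $n!\sum G(\alpha_2,\dots,\alpha_n;t)$, which by induction is $n\log^{n-1}(t(1-t))\cdot\frac{d}{dt}\log(t(1-t))$. Both sides vanish to the appropriate order as $t\to0^+$ (each word contains $\log t$ only polynomially, and any word starting with $1$ vanishes at $0$). The cleanest route, though, is the induction on $n$ itself: differentiate, and fix the constant by the behavior at $t\to0^+$ after regularization.

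For \eqref{eq:A1n_Z(6)}, I would insert \eqref{eq:log_pow} into \eqref{eq:A1n_int} and partial-fraction the kernel. Since $1-t+t^2=(t-\varrho)(t-\bar\varrho)$ with $\varrho-\bar\varrho=i\sqrt3$, we get
\[
\frac{1}{1-t(1-t)}=\frac{1}{i\sqrt3}\left(\frac{1}{t-\varrho}-\frac{1}{t-\bar\varrho}\right).
\]
By the recursion \eqref{eq:GPL_rec},
\[
\mathscr A_{1,n}=\frac{n!}{2i\sqrt3}\sum_{\alpha_i\in\{0,1\}}\bigl[G(\varrho,\boldsymbol\alpha;1)-G(\bar\varrho,\boldsymbol\alpha;1)\bigr].
\]
Because all $\alpha_i$ are real and $1$ is real, complex conjugation of the defining iterated integrals gives $G(\bar\varrho,\boldsymbol\alpha;1)=\overline{G(\varrho,\boldsymbol\alpha;1)}$. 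The bracket is therefore $2i\I G(\varrho,\boldsymbol\alpha;1)$, which yields the stated formula. For $n=0$ the same computation gives \eqref{eq:A10}, and $\I\log(1-1/\varrho)=\I\log\varrho=\pi/3$ since $1-\bar\varrho=\varrho$.

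For membership, I would write
\[
\mathscr A_{1,n}=\frac{n!}{2i\sqrt3}\sum\bigl[G(\varrho,\boldsymbol\alpha;1)-G(\bar\varrho,\boldsymbol\alpha;1)\bigr],
\]
and multiply by $i\sqrt3$: the quantity $-\tfrac{3n!}{2}\sum[\dots]$ is a $\mathbb Q$-combination of $G(\alpha_1,\dots,\alpha_{n+1};1)$ with letters in $\{0,1,\varrho,\bar\varrho\}$, all sixth roots of unity or $0$. Hence $\mathscr A_{1,n}\in i\sqrt3\,\mathfrak Z_{n+1}(6)$, since $\frac{1}{i\sqrt3}=\frac{-i\sqrt3}{3}$.

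The main obstacle is that \eqref{eq:Zk(N)'} requires $\alpha_w\neq0$, whereas words ending in $0$ produce trailing zeros; the first letter $\varrho\neq1$ is fine. I would invoke \eqref{eq:Zk(N)''}, valid for $N=6\ge3$, which allows arbitrary letters with only $\alpha_1\neq z=1$. This condition holds because $\alpha_1=\varrho$ or $\bar\varrho$. Convergence at $t=1$ is clear, since the first letter is not $1$, so all the GPLs involved are finite and the interchange of sum and integral is legitimate for finite sums.
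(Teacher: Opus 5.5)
Your proposal is correct and is essentially the paper's argument: the shuffle expansion of $[G(0;t)+G(1;t)]^n$ (you shuffle all $n$ single letters at once, while the paper inserts one letter at a time by induction), then the partial fraction of $\frac{1}{1-t(1-t)}$ with \eqref{eq:GPL_rec}, the conjugation $G(\bar\varrho,\boldsymbol\alpha;1)=\overline{G(\varrho,\boldsymbol\alpha;1)}$ (which the paper leaves implicit), and \eqref{eq:Zk(N)''} for the membership. There are only two harmless slips: the correct rescaling is $\mathscr A_{1,n}/(i\sqrt3)=-\frac{n!}{6}\sum[\cdots]$ rather than $-\frac{3n!}{2}\sum[\cdots]$, and in your optional differentiation check it is the difference of the two sides of \eqref{eq:log_pow}, not each side, that tends to $0$ as $t\to0^+$.
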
\begin{proof}We prove \eqref{eq:log_pow} by induction on $n$. For $n=1$, we have\begin{align}
\log t+\log(1-t)=G(0;t)+G(1;t),
\end{align}according to \eqref{eq:GPL0} and \eqref{eq:G(a;t)}. Suppose that \eqref{eq:log_pow} is true for a certain positive integer  $n$, then we have\begin{align}
[\log t+\log(1-t)]^{n+1}=n!\sum_{\alpha_i\in\{0,1\}}[G(0;t)+G(1;t)]G(\alpha_1,\dots,\alpha_n;t),
\end{align}where the products  $ G(0;t)G(\alpha_1,\dots,\alpha_n;t)$ and $ G(1;t)G(\alpha_1,\dots,\alpha_n;t)$ can be expanded through shuffles of GPLs, as in \eqref{eq:ShGPL}. Here, the new GPL parameter  (either $0$ or $1$)  can be inserted into $n+1$ different slots, so that  \begin{align}
\sum_{\alpha_i\in\{0,1\}}[G(0;t)+G(1;t)]G(\alpha_1,\dots,\alpha_n;t)
\end{align}  runs over\begin{align}
\sum_{\alpha_i\in\{0,1\}}G(\alpha_1,\dots,\alpha_{n+1};t)
\end{align}for a total of $n+1=\frac{(n+1)!}{n!}$ times. Thus, we see that \eqref{eq:log_pow} is also  true for $n+1$.
  
With the partial fraction expansion\begin{align}
\frac{1}{1-t(1-t)}=\frac{1}{\sqrt{3}i}\left( \frac{1}{t-\varrho}-\frac{1}{t-\frac{1}{\varrho}} \right)
\end{align}and the GPL recursion in \eqref{eq:GPL_rec}, we may turn \eqref{eq:A1n_int} and \eqref{eq:log_pow} into\begin{align}
\mathscr A_{1,n}={}&\frac{n!}{2\sqrt{3}i}\sum_{\alpha_i\in\{0,1\}}\left[G(\varrho,\alpha_1,\dots,\alpha_n;1)-G\left(\frac1\varrho,\alpha_1,\dots,\alpha_n;1\right)\right],\tag{\ref{eq:A1n_Z(6)}$'$}
\end{align} which is equivalent to the  ``$=$''  part of    \eqref{eq:A1n_Z(6)}. For the ``$\in$'' part of \eqref{eq:A1n_Z(6)}, invoke \eqref{eq:Zk(N)''}.

To verify \eqref{eq:A10}, simply evaluate an elementary integral\begin{align}
\mathscr A_1\equiv\mathscr A_{1,0}\xlongequal{\text{\eqref{eq:A1n_int}}}\frac{1}{2}\int_0^1\frac{\D t}{1-t(1-t)}=\frac{\pi}{3\sqrt{3}},
\end{align} and check that it is compatible with special values of weight-$1$ GPLs [see \eqref{eq:G(a;t)}].  \end{proof}

\begin{proposition}\label{prop:Asn}With $ \varrho\colonequals e^{\pi i/3} $, we have the following   sandwich identities \begin{align}
\begin{split}&\sum_{k=1}^\infty\left.\!\frac{\partial^{n}}{\partial x^{n}}\frac{[t(1-t)]^{x}}{x^{s-1}}\right|_{x=k}\\={}&-n!\sum_{\alpha_i,\beta_j\in\{0,1\}}\left[G(\alpha_{1},\dots,\alpha_{s-2},\varrho,\beta_1,\dots,\beta_n;t)+G\left(\alpha_{1},\dots,\alpha_{s-2},\frac{1}{\varrho},\beta_1,\dots,\beta_n;t\right)\right]
\end{split}\label{eq:sandwich}
\end{align}and \begin{align}
\mathscr A_{s,n}={}&-2n!\RE\sum_{\alpha_i,\beta_j\in\{0,1\}}G(0,\alpha_{1},\dots,\alpha_{s-2},\varrho,\beta_1,\dots,\beta_n;1)\in  \mathfrak Z_{n+s}(6)
\label{eq:Asn_Z(6)}
\end{align}for all $s\in\mathbb Z_{>1}, n\in\mathbb Z_{\geq0}$. \end{proposition}\begin{proof}First, we study the cases where $n=0$, so that the left-hand side of \eqref{eq:sandwich} becomes   the classical polylogarithm\begin{align}
\sum_{k=1}^\infty\frac{[t(1-t)]^{k}}{k^{s-1}}=\Li_{s-1}(t(1-t))
\end{align}for $s\in\mathbb Z_{>1} $,
 satisfying the  recursion\begin{align}
\frac{\D }{\D t}\Li_{s}(t(1-t))= \left( \frac{1}{t} +\frac{1}{t-1}\right)\Li_{s-1}(t(1-t)),
\end{align}with an initial condition [cf.\ \eqref{eq:G(a;t)}]\begin{align}
\Li_1(t(1-t))=-\log(1-t(1-t))=-G(\varrho;t)-G\left( \frac{1}{\varrho} ;t\right).
\end{align}Invoking the GPL recursion \eqref{eq:GPL_rec} for\begin{align}
\Li_{s}(t(1-t))=\int_0^t\left( \frac{1}{\tau} +\frac{1}{\tau-1}\right)\Li_{s-1}(\tau(1-\tau))\D\tau,
\end{align} we may verify  \eqref{eq:sandwich}$_{n=0}$\ by induction on $s$.
(This is a variation on the proof of \cite[Theorem 1.3(a)]{Zhou2023SunCMZV}.)

Next, we move on to the scenarios involving positive integers $n$.
Like what we encountered in the last paragraph, the function \begin{align}
a_{s,n}(t)\colonequals 
\sum_{k=1}^\infty\left.\!\frac{\partial^{n}}{\partial x^{n}}\frac{[t(1-t)]^{x}}{x^{s-1}}\right|_{x=k}
\end{align}also satisfies a recursion\begin{align}
a_{s,n}(t)=\int_0^t\left( \frac{1}{\tau} +\frac{1}{\tau-1}\right)a_{s-1,n}(\tau)\D\tau\label{eq:asn_rec}
\end{align}
for all $ s\in\mathbb Z_{>1}$, thanks to the following elementary integral formula: \begin{align} \int_0^t\left( \frac{1}{\tau} +\frac{1}{\tau-1}\right)[\tau(1-\tau)]^{x}\D \tau=\frac{[t(1-t)]^{x}}{x},\end{align} which holds for all $x\in(0,\infty)$. Therefore, for each fixed positive integer $n$, the validity of  \eqref{eq:sandwich}$ _{s\in\mathbb Z_{>1}}$ builds inductively on that of the $s=2$ case, by virtue of \eqref{eq:GPL_rec} and \eqref{eq:asn_rec}. As for $ s=2$, we may compute directly from \eqref{eq:log_pow} and  \eqref{eq:asn_rec}    that \begin{align}
\begin{split}a_{2,n}(t)={}&\int_0^t\left( \frac{1}{\tau} +\frac{1}{\tau-1}\right)\frac{\tau(1-\tau)[\log \tau+\log(1-\tau)]^{n}}{1-\tau(1-\tau)}\D\tau\\={}&-\int_0^t\left( \frac{1}{\tau-\varrho} +\frac{1}{\tau-\frac{1}{\varrho}}\right)[\log \tau+\log(1-\tau)]^{n}\D\tau\\={}&-n!\int_0^t\left( \frac{1}{\tau-\varrho} +\frac{1}{\tau-\frac{1}{\varrho}}\right)\sum_{\alpha_i\in\{0,1\}}G(\alpha_1,\dots,\alpha_n;\tau)\D\tau,
\end{split}\label{eq:a2n}
\end{align}so   \eqref{eq:sandwich}$_{s=2}$ is true by the GPL recursion in \eqref{eq:GPL_rec}.  

Finally, with the partial fraction expansion\begin{align}
\frac{1}{t(1-t)}=\frac{1}{t}+\frac{1}{1-t},
\end{align}the symmetry $ a_{s,n}(t)=a_{s,n}(1-t)$, and the GPL recursion in \eqref{eq:GPL_rec}, we may turn \eqref{eq:Asn_int} and \eqref{eq:sandwich} into \begin{align}
\begin{split}\mathscr A_{s,n}={}&\frac{1}{2}\int_0^1\frac{a_{s,n}(t)\D t}{t(1-t)}=\int_0^1\frac{a_{s,n}(t)\D t}{t}\\={}&-2n!\RE\int_0^1\sum_{\alpha_i,\beta_j\in\{0,1\}}G(\alpha_{1},\dots,\alpha_{s-2},\varrho,\beta_1,\dots,\beta_n;t)\frac{\D t}{t}\\={}&-2n!\RE\sum_{\alpha_i,\beta_j\in\{0,1\}}G(0,\alpha_{1},\dots,\alpha_{s-2},\varrho,\beta_1,\dots,\beta_n;1),\label{eq:Asn_int_sum}
\end{split}
\end{align}as claimed in the   ``$=$''  part of   \eqref{eq:Asn_Z(6)}. Again,  for the ``$\in$'' part of \eqref{eq:Asn_Z(6)}, invoke \eqref{eq:Zk(N)''}.   
\end{proof}\begin{remark}We may rewrite \eqref{eq:a2n} as follows:\begin{align}
\begin{split}a_{2,n}(t)={}&\int_0^t\frac{(1-2\tau)[\log \tau+\log(1-\tau)]^{n}}{1-\tau(1-\tau)}\D\tau=\int_0^{t(1-t)}\frac{\log^ny}{1-y}\D y\\={}&n!\int_0^{t(1-t)}{G(\boldsymbol0_{n};y)}\frac{\D y}{1-y}=-n!G(1,\boldsymbol0_{n};t(1-t)),
\end{split}\tag{\ref{eq:a2n}$'$}
\end{align}which reveals a close connection to the Bloch--Wigner--Ramakrishnan polylogarithm function (cf.\  \cite[A.2.7(8)]{Lewin1981} and \cite[\S1]{Zagier1990}). As a consequence, the recursion \eqref{eq:asn_rec} brings us \begin{align}
  a_{s,n}(t)=-n!G(\boldsymbol0_{s-2},1,\boldsymbol0_{n};t(1-t)).
\end{align}This formulation will become useful  during our proof of Theorem \ref{thm:A2nA4n} in \S\ref{subsubsec:desc1}.
\eor\end{remark}

\begin{remark}We note that our   \eqref{eq:A1n_Z(6)} and      \eqref{eq:Asn_Z(6)} are similar to the explicit formulae of Wang--Xu \cite{WangXu2021}, which expressed certain {E}uler--{A}p\'{e}ry-type series in terms of alternating multiple zeta values (AMZVs), namely, CMZVs of level $2$ that are extensively tabulated \cite{MZVdatamine2010}. Like Wang--Xu \cite{WangXu2021}, we may evaluate $ \mathscr A_{1,n}$ via \eqref{eq:A1n_Z(6)}    [resp.\ $ \mathscr A_{s,n}$  via \eqref{eq:Asn_Z(6)}] for $ 1\leq n+1\leq 5$ (resp.\ $ 2\leq n+s\leq 5$)
by consulting  Au's  \texttt{MultipleZetaValues} package \cite{Au2025a,Au2022a}, which represents $ G(\alpha_1,\dots,\alpha_n;1)$ by \texttt{IterInt[\{$ \alpha_1,\dots,\alpha_n$\}]}, and supports reductions of CMZVs through the function \texttt{MZExpand}.
For want of  provable  look-up tables for $ \mathfrak Z_w(6)$ with  $w>5$, we need some workarounds to be developed in \S\ref{sec:evalClGl}.
\eor\end{remark}\begin{remark}In both \eqref{eq:A1n_Z(6)} and      \eqref{eq:Asn_Z(6)}, the GPLs assume the forms of \begin{align}
G(\alpha_1,\dots,\alpha_n;1),\quad\text{where }\alpha_1,\dots,\alpha_n\in\{0,1,\varrho\},\alpha_1\neq1,\alpha_n\neq0,
\end{align} which qualify them as the complex conjugates of multiple Deligne values (MDVs)  \cite[\S2]{Broadhurst2014MDV}. According to the fibration procedures (to be described in \S\ref{subsec:fibLyn}), the complex conjugate of every MDV (which allows multiple occurrences of $ \varrho$ among the GPL parameters $ \alpha_1,\dots,\alpha_n$) can be rewritten as  a polynomial of MCVs. Therefore, we will not use the terminology MDV later in this article.  
\eor\end{remark}
\section{MCV and MZV evaluations of deformed Ap\'ery-like series\label{sec:evalClGl}}
In \S\ref{subsec:fibLyn}, we describe partial reductions of \eqref{eq:A1n_Z(6)} and
     \eqref{eq:Asn_Z(6)} via automated manipulations of GPLs. In \S\ref{subsec:algLyndonMCV}, we establish algebraic relations among MCVs, so as to fully reduce \eqref{eq:A1n_Z(6)} and
     \eqref{eq:Asn_Z(6)} for the entries of Tables \ref{tab:A1n} and \ref{tab:Asn}. In \S\ref{subsec:descClausen}, we verify Theorems \ref{thm:A1n}--\ref{thm:A2nA4n} in their entirety, and explain the algorithmic construction of  Table \ref{tab:A2nA4n}. \subsection{GPL fibrations and Lyndon word decompositions\label{subsec:fibLyn}}We have  the following differential relation \cite[(8.8)]{Weinzierl2022Book} \begin{align}
\begin{split}{}&\D G(\alpha_1,\dots,\alpha_n;z)\\={}&\sum_{j=1}^n G(\alpha_1,\dots,\widehat{\alpha_j},\dots,\alpha_n;z)[\D\log(\alpha_{j-1}-\alpha_j)-\D\log(\alpha_{j+1}-\alpha_j)]\label{eq:GPL_diff_form}\end{split}
\end{align}for $\alpha_0\colonequals z $, $ \alpha_{n+1}\colonequals0$,  
dropping each element under the caret, while treating ``$ \D\log 0$'' as $0$ (a convention that is compatible with logarithmic regularization  \cite[\S2.3]{Panzer2015}). Through repeated invocations of this differential relation and GPL recursions, one can establish the following fibration of GPLs:{\allowdisplaybreaks
\begin{align}
\sum_{\alpha_i\in\{0,1\}}G(\varrho,\alpha_1,\dots,\alpha_n;1)\in{}&\mathfrak F_{n+1},
\label{eq:A1n_fib}\\\sum_{\alpha_i,\beta_j\in\{0,1\}} G(0,\alpha_{1},\dots,\alpha_{s-2},\varrho,\beta_1,\dots,\beta_n;1)\in{}&\mathfrak F_{n+s},\label{eq:Asn_fib}
\end{align}
}where \begin{align}
\mathfrak F_m\colonequals \Span_{\mathbb Q}\left\{ Z_{k}(\pi i)^{\ell}G(\alpha_{1},\ldots,\alpha_{m-k-\ell};\varrho)\middle | \begin{smallmatrix}k,\ell,m-k-\ell\in\mathbb Z_{\geq0}\\Z_k\in\mathfrak Z_k(1)\\\alpha_1,\ldots,\alpha_{m-k-\ell}\in\{0,1\}\end{smallmatrix}\right\},
\end{align}and $ \mathfrak Z_0(1)\colonequals \mathbb Q$. In both \eqref{eq:A1n_fib} and \eqref{eq:Asn_fib}, explicit fibrations are computable from the function \texttt{fibrationBasis} in Panzer's \texttt{HyperInt}  package \cite{Panzer2015}.  After these fibrations, we may reduce the computations of  $ \mathscr A_{1,n}$ via \eqref{eq:A1n_Z(6)}    [resp.\ $ \mathscr A_{s,n}$  via \eqref{eq:Asn_Z(6)}]
to the imaginary (resp.\ real) part  of the GPLs appearing on the right-hand side of \eqref{eq:A1n_fib} [resp.\ \eqref{eq:Asn_fib}].
 
The shuffling procedure in \eqref{eq:GPL_shuffle} expresses the product of two low-weight GPLs  as a finite sum of high-weight GPLs. It is sometimes desirable to operate in the reverse direction, namely, to decompose a high-weight GPL into a polynomial of GPLs that come in  equal or smaller weights, such as\footnote{One can check the relation \eqref{eq:Lwd} in two ways:  (1) in Panzer's \texttt{HyperInt}  package \cite{Panzer2015} for \texttt{Maple}, type\begin{quote} \texttt{fibrationBasis(Hlog(z, [A, B])*Hlog(z, [A, A, B, B]) - 9*Hlog(z, [A, A, A, B, B, B]) - 4*Hlog(z, [A, A, B, A, B, B]) - Hlog(z, [A, A, B, B, A, B]), [z])}\end{quote} and receive \texttt{Hlog(z, [A, B, A, A, B, B])}; (2) in Ma\^itre's \texttt{HPL} package \cite{Maitre2005,Maitre2012} for \texttt{Mathematica}, type \begin{quote}\texttt{HPLReduceToMinimalSet[HPL[\{0, -1, 0, 0, -1, -1\}, z]] //.HPL[a\_\_] :> G[a] //.G[a\_, z] :> G[HPLMtoA[a], z]}\end{quote} and receive \texttt{G[\{0, -1\}, z] G[\{0, 0, -1, -1\}, z] - G[\{0, 0, -1, -1, 0, -1\}, z] - 4 G[\{0, 0, -1, 0, -1, -1\}, z] - 9 G[\{0, 0, 0, -1, -1, -1\},z]}. \label{fn:Lyn1} } \begin{align}
\begin{split}&G(A,B,A,A,B,B;z)\\={}&G(A,B;z)G(A,A,B,B;z)-9G(A,A,A,B,B,B;z)-4G(A,A,B,A,B,B;z)\\{}&-G(A,A,B,B,A,B;z),
\end{split}\label{eq:Lwd}
\end{align}  or in terms of formal words\begin{align}
ABAABB=AB\shf AABB-9AAABBB-4AABABB-AABBAB.
\end{align}The relation above decomposes the GPL associated with a  non-Lyndon word $ ABAABB$ into a  polynomial of several GPLs associated with Lyndon words $ AB$, $ AABB$, $ AAABBB$, $ AABABB$,  and $ AABBAB$. Here, a binary Lyndon word of length $w$ is an aperiodic sequence of $w$ letters sorted from the alphabet $ \{A,B\}$ that\ precedes (lexicographically) every cyclic shift of itself. In principle, each GPL associated with a binary word admits a Lyndon word decomposition, as guaranteed by Radford's theorem \cite{Radford1979} for the ring basis of shuffle algebra. In practice,  if a binary word is spelt with $8$ letters or fewer, then its Lyndon word decomposition can be generated by  Ma\^itre's  \texttt{HPL} package \cite{Maitre2005,Maitre2012}.\footnote{Thanks to the built-in substitution rules for GPLs up to weight $8$, the function \texttt{HPLReduceToMinimalSet} (following the definition of ``minimal set'' by Remiddi--Vermaseren \cite{RemiddiVermaseren2000}) executes binary  Lyndon word decomposition, so long as one maps the letter  $A$ to $0$ and the letter $B$ to $-1$. Ma\^itre's function \texttt{HPL[\{$ \alpha_1,\dots,\alpha_n$\}, z]} is the same as  $G(\alpha_1,\dots,\alpha_n;z)$ if $ \alpha_1,\dots,\alpha_n\in\{0,-1\}$. Note however that these statements are no longer true if one trades $-1$ for $1$. \label{fn:Lyn2}} 

When $\min\{a,b\}>0$, a monotone word 
\begin{align} \underset{a}{\underbrace{A\cdots A}}\underset{b}{\underbrace{B\cdots B}}\end{align} qualifies (trivially) as a Lyndon word, and its corresponding multiple Clausen value (MCV)\begin{align}
(-1)^{b}G(\boldsymbol0_{a},\boldsymbol1_{b};e^{\pi i/3})\equiv(-1)^{b}G(\underset{a}{\underbrace{0,\dots ,0}},\underset{b}{\underbrace{1,\dots, 1}};e^{\pi i/3})=\begin{cases} \mu_{a+1}\equiv\mu_{a+1,\mathbf 1_0}& \text{if }b=1 \\
\mu_{a+1,\smash[b]{\underset{b-1}{\smallunderbrace{\scriptstyle 1,\dots,1}}}}\equiv\mu_{a+1,\boldsymbol1_{b-1}} & \text{if }b>1 \\
\end{cases}
\end{align}is either a classical polylogarithm (if $b=1$) or a polylogarithm of Nielsen's type (if $ b>1$).
 Some MCVs in the form of $ \mu_{a,\boldsymbol1_n}$  can be further simplified by the function \texttt{LiReduce} in the  \texttt{logsine} package of Borwein--Straub \cite{BorweinStraub2011ISSAC}.\footnote{In  \cite[Theorems 3.2, 3.8, 4.4, and 4.6]{BorweinStraub2015Snp}, Borwein--Straub presented provable sum rules that simplified certain MCVs of Nielsen's type. Meanwhile, in   \cite[Example 4.8]{BorweinStraub2015Snp},  Borwein--Straub listed the empirical reductions of $ \I \mu_{4,1,1}$ and $ \RE\mu_{5,1,1}$ without proving them. In \S\ref{subsec:algLyndonMCV} of this work, we will not require any input from the   \texttt{logsine} package as the prior knowledge for our constructions of algebraic relations among MCVs.} The MCV corresponding to a non-monotone binary Lyndon word\begin{align}
(-1)^{n}G\big(\boldsymbol0_{a_{1}-1},1,\boldsymbol0_{a_{2}-1},1,\dots,\boldsymbol0_{a_{n}-1},1;e^{\pi i/3}\big)
\end{align}takes the form of $ \mu_{a_1,\dots,a_n}$ (where $ a_j>1$ for a certain $ j\in\{2,\dots,n\}$), whose real and imaginary parts may or may not be  reducible to classical/Nielsen polylogarithms  (see Propositions \ref{prop:Lyn56}--\ref{prop:Lyn78} below).

Thus far,  fibrations and Lyndon word decompositions together offer (partial) simplifications for  the right-hand sides of \eqref{eq:A1n_Z(6)} and \eqref{eq:Asn_Z(6)}, narrowing them down to polynomials of certain MCVs. In the next subsection, we will work on Lyndon MCVs $ \mu_{a_1,\dots,a_n}$ (where $ \boldsymbol0_{a_{1}-1}1\boldsymbol0_{a_{2}-1}1\cdots\boldsymbol0_{a_{n}-1}1$ constitute binary Lyndon words) and their automated reductions.

\subsection{Provable reductions for  certain Lyndon MCVs\label{subsec:algLyndonMCV}}We say that a Lyndon MCV $ \mu_{a_1,\dots,a_n}$ (of weight $ a_1+\dots+a_n$ and depth $n$) is ``reducible'' if it can be written as a polynomial (with rational coefficients) of $ \pi i$, Lyndon MCVs of lower weights/depths, and CMZVs of levels $ \leq2$.\footnote{Being ``reducible'' in our context is almost synonymous to being ``non-primitive'' in Broadhurst's terminology \cite[\S3]{Broadhurst2014MDV}, except that Broadhurst did not count the reducibility to  CMZVs of levels $ \leq2$ as a criterion for non-primitivity. Therefore, the MCVs $ \mu_3$, $ \mu_5$, $\mu_7$, and $ \mu_{7,1}$ (cf.\ Tables \ref{tab:MCV2345}, \ref{tab:MCV7}, and \ref{tab:MCV8}) are ``reducible''  but still ``primitive''. In Table \ref{tab:MCV2345}, we also consider the representation of $ \mu_{3,2}$ in terms of $ \mu_{4,1}$ as a ``reduction'', even though both MCVs have the same weight/depth. We pick  $ \mu_{4,1}$ as an  ``irreducible'' MCV, because its corresponding Lyndon word $ 00011$ precedes (lexicographically) that of  $ \mu_{3,2}$, which is $ 00101$. In Tables \ref{tab:MCV2345}--\ref{tab:MCV8}, we always prioritize our choices of  ``irreducible'' MCVs by their lexicographic orders. Presumably, the entries of Tables \ref{tab:MCV2345}--\ref{tab:MCV8} exhaust all the possible algebraic relations among MCVs and MZVs up to weight $8$, since our choices of ``irreducible'' MCVs saturate Broadhurst's conjectural bounds (cf.\ Table  \ref{tab:primMCVs}  in \S\ref{subsec:conjMDV}).\label{fn:irred_prim}} Naturally, this notion of ``reducibility'' can be extended to the real and imaginary parts of  $ \mu_{a_1,\dots,a_n}$, as well.

  \begin{proposition}\label{prop:Lyn56}\begin{enumerate}[leftmargin=*, 
label=\emph{(\alph*)},ref=(\alph*),
widest=d, align=left] \item Certain Lyndon MCVs of weights $2 $, $3$, $4$, and $5$ satisfy the algebraic relations listed in Table \ref{tab:MCV2345}. 
 
 \begin{table}[h]\caption{Provable reductions for  certain Lyndon MCVs of weights 2, 3, 4, and 5\label{tab:MCV2345}}{\scriptsize\begin{tabular}{r@{{\,}={\;}}l}\hline\hline
$\RE\mu _2$&$\frac{\pi ^2}{2^{2}\cdot3^{2}}$\\\hline$\mu _3$&$\frac{\zeta _{3}}{3}+\frac{5 \pi ^3 i }{2\cdot3^{4}}$\\
$\mu _{2,1}$&$\frac{2 \zeta _{3}}{3}+\frac{ \pi  i  \mu _{2}}{3}-\frac{\pi ^3 i }{2\cdot3^{4}}$\\\hline$\mu _{3,1}$&$\mu _{4}-\frac{2 \pi  i  \zeta _{3}}{3^{2}}-\frac{19 \pi ^4}{2^{3}\cdot3^{4}\cdot5}$\\
$\mu _{2,1,1}$&$\mu _{4}-\frac{ \pi  i  \zeta _{3}}{3^{2}}-\frac{\pi ^2 \mu _{2}}{2\cdot3^{2}}-\frac{11 \pi ^4}{2^{3}\cdot3^{4}\cdot5}$ \\ $\RE\mu _4$&$\frac{7\cdot13 \pi ^4}{2^{4}\cdot3^{5}\cdot5}$\\\hline $\mu _5$&$\frac{5^{2} \zeta _{5}}{2\cdot3^{3}}+\frac{17 \pi ^5 i }{2^{3}\cdot3^{6}}$\\
$\mu _{3,2}$&$-\frac{5 \zeta _{5}}{2\cdot3}-\frac{7 \mu _{4,1}}{3}-\frac{2 \pi  i  \mu _{4}}{3^{2}}+\frac{73 \pi ^5 i }{2^{2}\cdot3^{7}\cdot5}$\\
$\mu _{3,1,1}$&$-\frac{7 \zeta _{5}}{2\cdot3^{2}}+\mu _{4,1}+\frac{\pi ^2 \zeta _{3}}{3^{3}}-\frac{\pi ^5 i }{2^{2}\cdot3^{5}\cdot5}$\\
$\mu _{2,2,1}$&$\frac{7^{2} \zeta _{5}}{2\cdot3^{3}}-\frac{7 \mu _{4,1}}{3}+\frac{ \pi  i  \mu _{4}}{3^{2}}+\frac{2 \zeta _{3} \mu _{2}}{3}-\frac{2 \pi ^2 \zeta _{3}}{3^{3}}+\frac{ \pi  i  \mu _{2}^2}{2\cdot3}-\frac{ \pi ^3 i  \mu _{2}}{2\cdot3^{4}}-\frac{97 \pi ^5 i }{2^{3}\cdot3^{7}\cdot5}$\\
$\mu _{2,1,1,1}$&$\frac{29 \zeta _{5}}{2\cdot3^{3}}+\frac{ \pi  i  \mu _{4}}{3}+\frac{\pi ^2 \zeta _{3}}{2\cdot3^{3}}-\frac{ \pi ^3 i  \mu _{2}}{2\cdot3^{4}}-\frac{41 \pi ^5 i }{2^{3}\cdot3^{6}\cdot5}$\\$\RE\mu _{4,1}$&$\frac{43 \zeta _{5}}{2^{2}\cdot3^{2}}-\frac{ \pi  \I\mu _{4}}{3}-\frac{\pi ^2 \zeta _{3}}{2^{2}\cdot3^{2}}$\\\hline\hline
\end{tabular}}\end{table}

\item Certain Lyndon MCVs of weight  $6$ satisfy the algebraic relations listed in Table \ref{tab:MCV6}.\begin{table}[h]\caption{Provable reductions for  certain  Lyndon MCVs of weight 6\label{tab:MCV6}}{\scriptsize\begin{tabular}{r@{{\,}={\;}}l}\hline\hline$\mu _{5,1}$&$2 \mu _{6}-\frac{29 \pi  i  \zeta _{5}}{2\cdot3^{4}}-\frac{\zeta _{3}^2}{2\cdot3}-\frac{5 \pi ^3 i  \zeta _{3}}{2\cdot3^{4}}-\frac{11\cdot53 \pi ^6}{2^{4}\cdot3^{8}\cdot7}$\\
$\mu _{4,1,1}$&$3 \mu _{6}-\frac{2 \pi  i  \zeta _{5}}{3}+\frac{ \pi  i  \mu _{4,1}}{3}+\frac{\pi ^2 \mu _{4}}{2\cdot3^{2}}-\frac{\zeta _{3}^2}{2}-\frac{2 \pi ^3 i  \zeta _{3}}{3^{4}}-\frac{31\cdot59 \pi ^6}{2^{3}\cdot3^{8}\cdot5\cdot7}$\\
$\mu _{3,2,1}$&$-\frac{7\cdot13 \mu _{6}}{11}+\frac{2\cdot7 \pi  i  \zeta _{5}}{3^{2}}-\frac{7 \pi  i  \mu _{4,1}}{3^{2}}-\frac{5 \pi ^2 \mu _{4}}{2\cdot3^{3}}+\frac{23 \zeta _{3}^2}{2\cdot3^{2}}+\frac{19 \pi ^3 i  \zeta _{3}}{3^{5}}+\frac{137 \pi ^6}{2^{3}\cdot3^{8}}$\\
$\mu _{3,1,2}$&$\frac{2\cdot47 \mu _{6}}{11}+\mu _{4,2}-\frac{2\cdot11 \pi  i  \zeta _{5}}{3^{4}}-\frac{2 \pi  i  \mu _{4,1}}{3^{2}}-\frac{2 \pi ^2 \mu _{4}}{3^{3}}-\frac{7 \zeta _{3}^2}{2\cdot3^{2}}-\frac{79 \pi ^3 i  \zeta _{3}}{2\cdot3^{5}}-\frac{11\cdot191 \pi ^6}{2^{3}\cdot3^{7}\cdot5\cdot7}$\\
$\mu _{3,1,1,1}$&$2 \mu _{6}-\frac{43 \pi  i  \zeta _{5}}{2\cdot3^{3}}+\frac{ \pi  i  \mu _{4,1}}{3}+\frac{\pi ^2 \mu _{4}}{2\cdot3^{2}}-\frac{\zeta _{3}^2}{3}+\frac{5 \pi ^3 i  \zeta _{3}}{2\cdot3^{5}}-\frac{167 \pi ^6}{3^{8}\cdot5\cdot7}$\\
$\mu _{2,2,1,1}$&$-11 \mu _{6}-\mu _{4,2}+\frac{7\cdot11 \pi  i  \zeta _{5}}{3^{3}}-\frac{7 \pi  i  \mu _{4,1}}{3^{2}}+\mu _{2} \mu _{4}-\frac{2^{2} \pi ^2 \mu _{4}}{3^{3}}+\zeta _{3}^2-\frac{ \pi  i  \zeta _{3} \mu _{2}}{3^{2}}+\frac{2\cdot5 \pi ^3 i  \zeta _{3}}{3^{5}}-\frac{ \pi ^2 \mu _{2}^2}{2^{2}\cdot3^{2}}-\frac{11 \pi ^4 \mu _{2}}{2^{3}\cdot3^{4}\cdot5}+\frac{17851 \pi ^6}{2^{4}\cdot3^{8}\cdot5\cdot7}$\\
$\mu _{2,1,1,1,1}$&$\mu _{6}-\frac{5^{2} \pi  i  \zeta _{5}}{2\cdot3^{4}}-\frac{\pi ^2 \mu _{4}}{2\cdot3^{2}}+\frac{ \pi ^3 i  \zeta _{3}}{2\cdot3^{5}}+\frac{\pi ^4 \mu _{2}}{2^{3}\cdot3^{5}}-\frac{17\cdot19 \pi ^6}{2^{4}\cdot3^{7}\cdot5\cdot7}$\\$\RE\mu _6$&$\frac{11^{2}\cdot31 \pi ^6}{2^{5}\cdot3^{8}\cdot5\cdot7}$\\$\I\mu _{4,2}$&$-\frac{11 \I\mu _{6}}{2}+\frac{2\cdot29 \pi  \zeta _{5}}{3^{4}}+\frac{ \pi ^2 \I\mu _{4}}{2^{2}\cdot3^{2}}+\frac{5 \pi ^3 \zeta _{3}}{3^{4}}$\\\hline\hline\end{tabular}}\end{table}
\end{enumerate}\end{proposition}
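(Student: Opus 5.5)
We need to prove the relations in Tables \ref{tab:MCV2345} and \ref{tab:MCV6} among Lyndon MCVs of weights $2$--$6$. Every MCV here is, by the discussion in \S\ref{subsec:fibLyn}, a GPL $(-1)^nG(\boldsymbol0_{a_1-1},1,\dots,\boldsymbol0_{a_n-1},1;\varrho)$ with letters in $\{0,1\}$ and argument $\varrho=e^{\pi i/3}$. The plan is to generate enough provable $\mathbb Q$-linear relations among these objects, together with their complex conjugates, and then solve the resulting linear system.

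\textbf{Sources of relations.} We use three families, all rigorous.
\begin{enumerate}
\item[(i)] \emph{Reflection $z\mapsto 1-z$.} Since $1-\varrho=\overline{\varrho}=1/\varrho$, the identity $G(\alpha_1,\dots,\alpha_n;1-z)$, obtained by substituting $x\mapsto 1-x$ in \eqref{eq:GPL_rec} and regularizing, rewrites each GPL at $\varrho$ as a shuffle-polynomial of GPLs at $\overline\varrho$ with letters swapped $0\leftrightarrow1$, times MZVs $G(\cdots;1)\in\mathfrak Z(1)$. Conjugating then gives relations between $\mu$ and $\overline\mu$.
\item[(ii)] \emph{Inversion $z\mapsto 1/z$,} or equivalently $z\mapsto z/(z-1)$. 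We have $1/\varrho=\overline\varrho$ and $\varrho/(\varrho-1)=\varrho/(-\overline\varrho)=-\varrho^2\cdot$\dots. Here I would use the M\"obius transformations permuting $\{0,1,\infty\}$ that fix the set $\{\varrho,\overline\varrho\}$. This group of order $6$ acts transitively on the two points, and each element gives a functional equation for hyperlogarithms in letters $\{0,1\}$, again with MZV and $\pi i$ coefficients (the $\pi i$ enters via the branch of $\log$).
\item[(iii)] \emph{Distribution relations.} These come from viewing the values as level-$6$ CMZVs, with $\varrho^3=-1$ and $\varrho^2=\varrho-1$. The GPL scaling \eqref{eq:GPLscaling} and the substitution $x\mapsto x^2$ or $x^3$ relate sums over $6$th roots of unity to lower-level (MZV/AMZV) sums, which gives relations such as $\RE\mu_2=\pi^2/36$ and the $\RE\mu_{2k}$ formulas.
\end{enumerate}

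\textbf{Assembling the system.} I would first reduce every product via shuffle \eqref{eq:GPL_shuffle} and a Lyndon basis (Radford), so that the unknowns are the real and imaginary parts of Lyndon MCVs of a given weight. Lower weights are assumed already reduced, proceeding by induction on weight $2,3,4,5,6$. For each weight, I apply (i)--(iii) to every binary word, using \texttt{HyperInt}'s \texttt{fibrationBasis} to carry out the transformations symbolically. I then take real and imaginary parts and row-reduce the system over $\mathbb Q$. The MZV constants up to weight $6$ are all known to be polynomials in $\zeta_3,\zeta_5,\pi^2$, and the $\pi i$ factors are explicit. The table entries are then read off as the solutions, with the ``irreducible'' ones chosen lexicographically: $\mu_2,\mu_4,\mu_{4,1},\mu_6,\mu_{4,2}$, their imaginary parts, and so on. Each entry can finally be cross-checked numerically to high precision, though the proof rests only on the linear algebra.

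\textbf{Expected obstacle.} The main risk is that (i)--(ii) alone may not yield enough rank at weights $5$ and $6$; for example, $\I\mu_{4,2}$ and $\RE\mu_{4,1}$ require relations that are not mere symmetries. If that happens, I would add the level-$6$ distribution relations (iii) and the stuffle (series-product) relations for the $\Li$-representation \eqref{eq:MCV_defn}, combined with shuffle as in double-shuffle. I expect this enlarged system to reach full rank up to weight $6$, matching Broadhurst's dimension counts.
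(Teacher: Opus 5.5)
\textbf{Gap: your symmetry relations do not have enough rank at weights $5$ and $6$, and the proposed remedy is unverified.}

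Up to weight $4$ your plan is the paper's. Your sources (i) and (ii) amount to the relations $\Lyn\fib_{z=\varrho}g_{\bm\alpha}(z)=0$ with $g_{\bm\alpha}(z)=G(\bm\alpha;\frac{z}{z-1})-G(\bm\alpha;1-z)$. This function vanishes at $\varrho$ because $\frac{\varrho}{\varrho-1}=1-\varrho$, so it is the fixed-point relation of an order-$3$ anharmonic map. To these you add the reflection-plus-conjugation relations coming from $g^R_{\bm\alpha}$ and $g^I_{\bm\alpha}$. Using all six anharmonic maps adds nothing essentially new, since they are generated by two involutions.

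The trouble is at weights $5$ and $6$. In the paper's computation, the holomorphic system has rank $4$ where $5$ is needed (to express $\mu_5,\mu_{3,2},\mu_{3,1,1},\mu_{2,2,1},\mu_{2,1,1,1}$ via $\mu_{4,1}$). At weight $6$ it has rank $6$ where $7$ is needed (modulo $\mu_6,\mu_{4,2}$). The extra input is needed for these complex relations. Only after they are completed does the paper read off $\RE\mu_{4,1}$ and $\I\mu_{4,2}$ from the $g^R,g^I$ relations.

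You foresee a deficit, but your remedy is only an expectation, placed where it is least safe. Adding level-$6$ distribution and regularized double-shuffle relations is not known to reach full rank. Level $6$ is not one of the levels ($1$, $2$, $3$, or powers of primes $p\geq5$) at which relations are expected to be ``standard''. The paper also notes that some relations in $\mathfrak Z_6(6)$ still lack an analytic proof. Moreover, stuffle products of $\Li_{a_1,\dots,a_n}(\varrho,1,\dots,1)$ leave the MCV family at once, so you would be row-reducing in all of $\mathfrak Z_w(6)$ with no guarantee of success.

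The ingredient you are missing is an identity of a different kind, in which the variable enters the GPL \emph{letters} rather than the argument. Since $1-\frac{1}{\varrho}=\varrho$, one has $\vfib_{z=\varrho}\bigl[G(0,0,z,0,z;1)-G(0,0,z,0,1-\frac1z;1)\bigr]=0$. To compute it, fibrate in $z$ over $\mathbb Q(i,\sqrt3)$; the letters $\frac{1\pm i\sqrt3}{2}$ appear where $z$ collides with $1-\frac1z$. Then replace those letters by $z$ and $\frac{z}{z-1}$, fibrate again, and set $z=\varrho$. The analogue built on $G(0,0,z,0,0,z;1)$ serves at weight $6$, and each identity supplies exactly the one missing rank. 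Without such an identity, or an actually executed computation showing that your enlarged system has full rank, your argument does not establish the complete weight-$5$ part of Table~\ref{tab:MCV2345}, nor Table~\ref{tab:MCV6}.
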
\begin{proof}\begin{enumerate}[leftmargin=*,  label=(\alph*),ref=(\alph*),
widest=d, align=left] \item
At weights $\leq5$, all the entries of Table \ref{tab:MCV2345} can be verified in  Au's   \texttt{MultipleZetaValues} package \cite{Au2025a,Au2022a}. Nevertheless, we will describe an alternative approach that generalizes well to higher weights.

Let $ \bm\alpha$ be a Lyndon word on the alphabet $ \{0,1\}$.  Consider the following functions:{\allowdisplaybreaks
\begin{align}
g_{\bm \alpha}(z)\colonequals {}&G\left(\bm \alpha;\frac{z}{z-1}\right)-G(\bm \alpha;1-z),\\g^{R}_{\bm\alpha}(z)\colonequals {}&\RE\left[G(\bm \alpha;z)-G\left(\bm \alpha;1-z\right)\right],\\g^{I}_{\bm\alpha}(z)\colonequals {}&\I\left[G(\bm \alpha;z)+G\left(\bm \alpha;1-z\right)\right].
\end{align}
}It is clear that $ g_{\bm \alpha}(\varrho)=g_{\bm \alpha}^{R}(\varrho)=g_{\bm \alpha}^{I}(\varrho)=0$ for $ \varrho=e^{\pi i/3}$.  We generate provable relations for MCVs from a collection of functions $\{g_{\bm \alpha}\}_{\bm\alpha\in \mathfrak L}$ (where each  $ \bm\alpha\in \mathfrak L$ has the same length) as follows:\begin{itemize}
\item 
Fibrate $g_{\bm \alpha}(z)$ with respect to the variable $z$  for each $ \bm\alpha\in \mathfrak L$,  so that $ g_{\bm \alpha}(z)$ is reexpressed through MZVs and GPLs $ G(\beta_1,\dots,\beta_n;z)$, where $ \beta_j\in\{0,1\}$ and $n$ does not exceed the length of $ \bm \alpha$.\footnote{This can be done by the function \texttt{HPLConvertToSimplerArgument} in  Ma\^itre's \texttt{HPL} package \cite{Maitre2005,Maitre2012}. However, note that Ma\^itre's \texttt{HPL([$  \alpha_1,\dots,\alpha_n$], z)} (where $ \alpha_1,\dots,\alpha_n\in\{0,1\}$) is equal to $ (-1)^{\alpha_1+\dots+\alpha_n}G( \alpha_1,\dots,\alpha_n;z)$. } Collect  these expressions  as $ \{\fib g_{\bm \alpha}(z)\}_{\bm \alpha\in\mathfrak L}$.\item Perform Lyndon word decomposition on the output from the last step,  resulting in a new collection    $ \{\Lyn\fib g_{\bm \alpha}(z)\}_{\bm \alpha\in\mathfrak L}$.\footnote{When the length of $ \bm \alpha$ does not exceed $8$, this can be done in  Ma\^itre's \texttt{HPL} package \cite{Maitre2005,Maitre2012}, so long as one converts back and forth between the alphabets $ \{0,1\}$ and $ \{0,-1\}$. (See Footnotes \ref{fn:Lyn1}--\ref{fn:Lyn2}.)} 
\item
Build a collection of simultaneous equations    $ \{\Lyn\fib_{z=\varrho} g_{\bm \alpha}(z)=0\}_{\bm \alpha\in\mathfrak L}$.  In each equation thus obtained, reduce GPLs to known expressions as far as possible.\footnote{This means that we need to build on the knowledge of $ G(0;\varrho)=\frac{\pi i}{3}$, $ G(1;\varrho)=-\frac{\pi i}{3}$, as well as all the  reductions of MCVs (along with their respective real/imaginary parts) available at lower weights. } Solve the remaining items  from the simultaneous equations. 
\end{itemize}
 When certain  $ g^{R}_{\bm\alpha}$ and $g^{I}_{\bm\alpha}$ join the collection, one can modify the procedures above by separating real and imaginary parts of each GPL expression, while treating $ \RE G(\bm \alpha;\varrho)$ and  $ \I G(\bm \alpha;\varrho)$
 as independent objects in the final step.

The procedures laid out in the last paragraph (which are fully programmable in Ma\^itre's \texttt{HPL} package \cite{Maitre2005,Maitre2012}  for \texttt{Mathematica}) account for  the entries of Table \ref{tab:MCV2345} at weights $\leq4$. An extra tweak will be required at weight $5$.
In the next few passages, we will reveal what exactly happens for the totality of Lyndon words $ \mathfrak L_n$ with lengths $ n\in\{2,3,4,5\}$.

At weight   $2$, we have $ \mathfrak L_{2}=\{01\}$ and $\Lyn\fib_{z=\varrho} g_{01}(z)=\frac{\pi^{2}}{2\cdot3}-G(0;\varrho)G(1;\varrho)+\frac{1}{2}[G(1;\varrho)]^2$ simplifies to $0$, so we cannot produce  non-trivial algebraic relations for MCVs when  not reading off their real/imaginary parts.
In the meantime,
we have $ \Lyn\fib_{z=\varrho} g^R_{01}(z)=\frac{\pi^{2}}{2\cdot3^{2}}-2\RE \mu_2$, which explains the tabulated entry for $ \RE \mu_2$.

At weight $3$, we have the simplifications \begin{align}
\Lyn\fib_{z=\varrho} g_{001}(z)={}&\mu _{3}+\zeta _{3}-2 \mu _{2,1}+\frac{2 \pi  i  \mu _{2}}{3}-\frac{7 \pi ^3 i }{2\cdot3^{4}},\\\Lyn\fib_{z=\varrho} g_{011}(z)={}&\mu _{3}-\zeta _{3}+\mu _{2,1}-\frac{ \pi  i  \mu _{2}}{3}-\frac{2 \pi ^3 i }{3^{4}},
\end{align}while taking into account the explicit evaluations of  $ G(0;\varrho)$ and $ G(1;\varrho)$. Thus, one may solve the two equations in     $ \{\Lyn\fib_{z=\varrho} g_{\bm \alpha}(z)=0\}_{\bm \alpha\in\mathfrak L_{3}}$  to verify the tabulated entries for $ \mu_3$ and $\mu_{2,1}$.

At weight $4$, the three members in      $ \{\Lyn\fib_{z=\varrho} g_{\bm \alpha}(z)=0\}_{\bm \alpha\in\mathfrak L_{4}}$  form a set of inhomogeneous linear equations in the formal variables $ \mu_4$, $\mu_{3,1}$, and $ \mu_{2,1,1}$. This is a  linear system of rank $2$, from which one may express  $\mu_{3,1}$, and $ \mu_{2,1,1}$ in terms of $\mu_4$ and MCVs of lower weights.
After this, one may solve $\RE\mu_4$ from       $ \{\Lyn\fib_{z=\varrho} g_{\bm \alpha}^{R}(z)=0\}_{\bm \alpha\in\mathfrak L_{4}}$.

At weight $5$, we have a linear system       $ \{\Lyn\fib_{z=\varrho} g_{\bm \alpha}(z)=0\}_{\bm \alpha\in\mathfrak L_{5}}$ of rank $4$, from which  one cannot deduce all the tabulated entries for $ \mu_5$, $ \mu_{3,2}$, $ \mu_{3,1,1}$, $ \mu_{2,2,1}$, and $ \mu_{2,1,1,1}$. Fortunately, we can make up for the rank deficit by throwing in an extra relation:\begin{align}
\vfib_{z=\varrho}\left[ G(0,0,z,0,z;1)-G\left( 0,0, z,0,1-\frac{1}{z};1\right) \right]=0.\label{eq:MCV5adj}
\end{align}     Here, when the operator  $ \vfib_{z=\varrho}$ hits a suitably designed expression $ \widetilde f(z)$, the workflow\footnote{This workflow can be completed in Panzer's  \texttt{HyperInt}  package \cite{Panzer2015} for \texttt{Maple}.} runs as follows:\begin{itemize}
\item 
Over the splitting field $ \mathbb Q\big(i,\sqrt{3}\big)$, fibrate $\widetilde f(z)$ with respect to the variable $z$, so that\begin{align}
\widetilde f(z)\in\Span_{\mathbb Q}\left\{ \left(\frac{\pi i\I z}{|\I z|}\right)^\ell Z_mG(\beta_{1},\dots,\beta_n;z)\middle|\begin{smallmatrix}\ell,m\in\mathbb Z_{\geq0},Z_m\in\mathfrak Z_m(1)\\\beta_{1},\dots,\beta_n\in\left\{ 0,1,\frac{1+i\sqrt{3}}{2} ,\frac{1-i\sqrt{3}}{2}\right\}\end{smallmatrix}\right\}.
\end{align}\item In the GPL parameters, replace $ \frac{1+i\sqrt{3}}{2}$ \Big(resp.\ $ \frac{1-i\sqrt{3}}{2}$\Big)
by $ z$ \big(resp.\ $ \frac{z}{z-1}$\big).\item Fibrate again with respect to the variable $z$,  before substituting $ z=\varrho$. \end{itemize}With the bonus relation \eqref{eq:MCV5adj} in place, we get a linear system of rank $5$, which meets our goal.  Consequently,    $ \RE\mu_{4,1}$ can be solved from        $ \{\Lyn\fib_{z=\varrho} g_{\bm \alpha}^{R}(z)=0\}_{\bm \alpha\in\mathfrak L_{5}}$.

\item At weight $6$, the linear system       $ \{\Lyn\fib_{z=\varrho} g_{\bm \alpha}(z)=0\}_{\bm \alpha\in\mathfrak L_{6}}$ has rank $6$. An extra relation
\begin{align}
\vfib_{z=\varrho}\left[ G(0,0,z,0,0,z;1)-G\left( 0,0, z,0,0,1-\frac{1}{z};1\right) \right]=0
\end{align}boosts the rank to $7$, which in turn, gives rise to the top $7$ entries of Table \ref{tab:MCV6}.
The rest of Table \ref{tab:MCV6} descend from      $ \{\Lyn\fib_{z=\varrho} g_{\bm \alpha}^{R}(z)=0\}_{\bm \alpha\in\mathfrak L_6}$    and      $ \{\Lyn\fib_{z=\varrho} g_{\bm \alpha}^{I}(z)=0\}_{\bm \alpha\in\mathfrak L_6}$.   
\qedhere\end{enumerate}\end{proof}\begin{remark}Henn--Smirnov--Smirnov \cite{HennSmirnovSmirnov2017} built an empirical database for the reductions of MPL/GPL expressions in $ \mathfrak Z_w(6)$, where $ 1\leq w\leq 6$. Regrettably,  the design for the empirical basis at weight $6$ in this database  was not optimal enough\footnote{Perhaps this defect was due to some erstwhile insurmountable challenges.  As of 2026 \cite[\S5.2]{Au2022a}, it appears that some algebraic relations in $ \mathfrak Z_6(6)$ are still not understood analytically. } 
to recover all the entries of Table \ref{tab:MCV6}. 
\eor\end{remark}

We may extend the service in the last proposition to  Lyndon words of lengths $7$ and $8$,  as shown below. \begin{proposition}\label{prop:Lyn78}\begin{enumerate}[leftmargin=*,  label=\emph{(\alph*)},ref=(\alph*),
widest=d, align=left] \item Certain Lyndon MCVs of weight  $7$ satisfy the algebraic relations listed in Table \ref{tab:MCV7}.\begin{table}[h]\caption{Provable reductions for  certain  Lyndon MCVs of weight 7\label{tab:MCV7}}{\scriptsize\begin{tabular}{r@{{\,}={\;}}p{.8\textwidth}}\hline\hline$\mu _7$&$\frac{7^{2}\cdot13 \zeta _{7}}{2^{4}\cdot3^{4}}+\frac{7\cdot13 \pi ^7 i }{2^{4}\cdot3^{9}}$\\
$\mu _{5,1,1}$&$-\frac{313 \zeta _{7}}{2^{2}\cdot3^{4}}+2 \mu _{6,1}+\frac{29 \pi ^2 \zeta _{5}}{2^{2}\cdot3^{5}}+\frac{ \pi  i  \zeta _{3}^2}{3^{2}}+\frac{19 \pi ^4 \zeta _{3}}{2^{3}\cdot3^{4}\cdot5}-\frac{97 \pi ^7 i }{2^{2}\cdot3^{9}\cdot7}$\\
$\mu _{4,3}$&$\frac{7\cdot137 \zeta _{7}}{2^{2}\cdot3^{5}}-\frac{7\cdot47 \mu _{6,1}}{3\cdot19}-\frac{2\cdot29 \mu _{5,2}}{19}+\frac{29 \pi  i  \mu _{6}}{3^{2}\cdot19}+\frac{\zeta _{3} \mu _{4}}{3}+\frac{5 \pi ^3 i  \mu _{4}}{2\cdot3^{4}}-\frac{19927 \pi ^7 i }{2^{4}\cdot3^{8}\cdot5\cdot7\cdot19}$\\
$\mu _{4,2,1}$&$\frac{9733 \zeta _{7}}{2^{4}\cdot3^{5}}-\frac{193 \mu _{6,1}}{3\cdot19}+\frac{3^{2} \mu _{5,2}}{19}+\frac{2^{3}\cdot41 \pi  i  \mu _{6}}{3^{2}\cdot19}+\frac{ \pi  i  \mu _{4,2}}{3}+\frac{29 \pi ^2 \zeta _{5}}{3^{5}}+\frac{2 \zeta _{3} \mu _{4}}{3}-\frac{ \pi ^3 i  \mu _{4}}{2\cdot3^{4}}-\frac{ \pi  i  \zeta _{3}^2}{3}+\frac{43 \pi ^4 \zeta _{3}}{2^{2}\cdot3^{5}\cdot5}-\frac{7\cdot1423 \pi ^7 i }{2^{3}\cdot3^{9}\cdot5\cdot19}$\\
$\mu _{4,1,1,1}$&$-\frac{5\cdot41 \zeta _{7}}{2^{3}\cdot3^{3}}+2 \mu _{6,1}+\frac{ \pi  i  \mu _{6}}{3}+\frac{\pi ^2 \zeta _{5}}{3^{2}}-\frac{ \pi ^2 \mu _{4,1}}{2\cdot3^{2}}+\frac{ \pi ^3 i  \mu _{4}}{3^{4}}+\frac{ \pi  i  \zeta _{3}^2}{2\cdot3}+\frac{\pi ^4 \zeta _{3}}{2^{2}\cdot3^{3}}-\frac{5279 \pi ^7 i }{2^{4}\cdot3^{9}\cdot5\cdot7}$\\
$\mu _{3,3,1}$&$-\frac{17\cdot311 \zeta _{7}}{2^{3}\cdot3^{5}}-\frac{2\cdot173 \mu _{6,1}}{3\cdot19}-\frac{3^{2}\cdot5 \mu _{5,2}}{19}-\frac{1013 \pi  i  \mu _{6}}{3^{2}\cdot19}-\pi  i  \mu _{4,2}-\frac{29 \pi ^2 \zeta _{5}}{2\cdot3^{3}}-2 \zeta _{3} \mu _{4}+\frac{ \pi ^3 i  \mu _{4}}{2\cdot3^{3}}+\frac{11 \pi  i  \zeta _{3}^2}{3^{3}}-\frac{281 \pi ^4 \zeta _{3}}{2^{2}\cdot3^{5}\cdot5}+\frac{159697 \pi ^7 i }{2^{2}\cdot3^{9}\cdot5\cdot7\cdot19}$\\
$\mu _{3,2,2}$&$\frac{97\cdot103 \zeta _{7}}{2^{4}\cdot3^{6}}+\frac{2213 \mu _{6,1}}{3^{2}\cdot19}+\frac{2^{4}\cdot3 \mu _{5,2}}{19}-\frac{7 \mu _{4,1,2}}{3}+\frac{859 \pi  i  \mu _{6}}{3^{3}\cdot19}+\frac{2 \pi  i  \mu _{4,2}}{3^{2}}+\frac{2^{2}\cdot29 \pi ^2 \zeta _{5}}{3^{6}}+\frac{2^{3} \zeta _{3} \mu _{4}}{3^{2}}-\frac{2 \pi ^3 i  \mu _{4}}{3^{5}}-\frac{2 \pi  i  \zeta _{3}^2}{3^{3}}+\frac{2\cdot5 \pi ^4 \zeta _{3}}{3^{6}}-\frac{103\cdot373 \pi ^7 i }{2^{3}\cdot3^{8}\cdot5\cdot7\cdot19}$\\
$\mu _{3,2,1,1}$&$\frac{5\cdot1523 \zeta _{7}}{2^{4}\cdot3^{5}}-\frac{193 \mu _{6,1}}{3\cdot19}+\frac{3^{2} \mu _{5,2}}{19}-\frac{1579 \pi  i  \mu _{6}}{3^{2}\cdot11\cdot19}-\frac{7 \pi ^2 \zeta _{5}}{3^{3}}+\frac{7 \pi ^2 \mu _{4,1}}{2\cdot3^{3}}-\frac{2 \pi ^3 i  \mu _{4}}{3^{4}}-\frac{19 \pi  i  \zeta _{3}^2}{2\cdot3^{3}}-\frac{191 \pi ^4 \zeta _{3}}{2^{3}\cdot3^{5}\cdot5}+\frac{137\cdot271 \pi ^7 i }{2^{3}\cdot3^{8}\cdot5\cdot7\cdot19}$\\
$\mu _{3,1,2,1}$&$-\frac{37\cdot79 \zeta _{7}}{2^{3}\cdot3^{5}}-\frac{2\cdot173 \mu _{6,1}}{3\cdot19}-\frac{3^{2}\cdot5 \mu _{5,2}}{19}+\frac{37\cdot47 \pi  i  \mu _{6}}{3^{2}\cdot11\cdot19}+\frac{ \pi  i  \mu _{4,2}}{3}+\frac{2^{3}\cdot5 \pi ^2 \zeta _{5}}{3^{5}}+\frac{ \pi ^2 \mu _{4,1}}{3^{3}}+\frac{2 \zeta _{3} \mu _{4}}{3}-\frac{ \pi ^3 i  \mu _{4}}{2\cdot3^{3}}-\frac{2^{3} \pi  i  \zeta _{3}^2}{3^{3}}+\frac{181 \pi ^4 \zeta _{3}}{2^{2}\cdot3^{6}\cdot5}-\frac{661 \pi ^7 i }{2\cdot3^{9}\cdot7\cdot19}$\\
$\mu _{3,1,1,2}$&$-\frac{5\cdot263 \zeta _{7}}{2^{4}\cdot3^{4}}+\frac{67 \mu _{6,1}}{19}+\frac{2^{3}\cdot3 \mu _{5,2}}{19}+\mu _{4,1,2}+\frac{317 \pi  i  \mu _{6}}{3\cdot11\cdot19}+\frac{13 \pi  i  \zeta _{3}^2}{3^{3}}+\frac{31 \pi ^4 \zeta _{3}}{2\cdot3^{6}}-\frac{163\cdot647 \pi ^7 i }{2^{3}\cdot3^{9}\cdot5\cdot7\cdot19}$\\
$\mu _{3,1,1,1,1}$&$-\frac{593 \zeta _{7}}{2^{4}\cdot3^{4}}+\mu _{6,1}+\frac{ \pi  i  \mu _{6}}{3}+\frac{43 \pi ^2 \zeta _{5}}{2^{2}\cdot3^{4}}-\frac{ \pi ^2 \mu _{4,1}}{2\cdot3^{2}}+\frac{ \pi ^3 i  \mu _{4}}{3^{4}}+\frac{ \pi  i  \zeta _{3}^2}{2\cdot3^{2}}+\frac{89 \pi ^4 \zeta _{3}}{2^{3}\cdot3^{6}\cdot5}-\frac{1129 \pi ^7 i }{2^{4}\cdot3^{8}\cdot5\cdot7}$\\
$\mu _{2,2,2,1}$&$-\frac{5\cdot28961 \zeta _{7}}{2^{4}\cdot3^{6}}+\frac{1597 \mu _{6,1}}{3^{2}\cdot19}-\frac{\mu _{5,2}}{19}+\frac{7 \mu _{4,1,2}}{3}+\frac{2\cdot11261 \pi  i  \mu _{6}}{3^{3}\cdot11\cdot19}+\frac{7 \pi  i  \mu _{4,2}}{3^{2}}+\frac{7^{2} \zeta _{5} \mu _{2}}{2\cdot3^{3}}+\frac{2\cdot311 \pi ^2 \zeta _{5}}{3^{6}}-\frac{7 \mu _{2} \mu _{4,1}}{3}-\frac{2^{2} \pi ^2 \mu _{4,1}}{3^{3}}+\frac{2^{4} \zeta _{3} \mu _{4}}{3^{2}}+\frac{ \pi  i  \mu _{4} \mu _{2}}{3^{2}}+\frac{2^{3} \pi ^3 i  \mu _{4}}{3^{5}}+\frac{ \zeta _{3} \mu _{2}^2}{3}+\frac{5\cdot7 \pi  i  \zeta _{3}^2}{3^{3}}-\frac{2 \pi ^2 \zeta _{3} \mu _{2}}{3^{3}}+\frac{5\cdot37 \pi ^4 \zeta _{3}}{2\cdot3^{6}}+\frac{ \pi  i  \mu _{2}^3}{2\cdot3^{2}}-\frac{ \pi ^3 i  \mu _{2}^2}{2^{2}\cdot3^{4}}-\frac{97 \pi ^5 i  \mu _{2}}{2^{3}\cdot3^{7}\cdot5}-\frac{61\cdot1811 \pi ^7 i }{2^{4}\cdot3^{7}\cdot5\cdot7\cdot19}$\\
$\mu _{2,2,1,1,1}$&$\frac{2^{3}\cdot29 \zeta _{7}}{3^{4}}+\mu _{5,2}-2 \pi  i  \mu _{6}-\frac{ \pi  i  \mu _{4,2}}{3}+\frac{29 \zeta _{5} \mu _{2}}{2\cdot3^{3}}-\frac{17^{2} \pi ^2 \zeta _{5}}{2\cdot3^{5}}+\frac{7 \pi ^2 \mu _{4,1}}{2\cdot3^{3}}+\frac{ \pi  i  \mu _{4} \mu _{2}}{3}-\frac{5 \pi ^3 i  \mu _{4}}{2\cdot3^{4}}+\frac{ \pi ^2 \zeta _{3} \mu _{2}}{2\cdot3^{3}}-\frac{389 \pi ^4 \zeta _{3}}{2^{2}\cdot3^{6}\cdot5}-\frac{ \pi ^3 i  \mu _{2}^2}{2^{2}\cdot3^{4}}-\frac{41 \pi ^5 i  \mu _{2}}{2^{3}\cdot3^{6}\cdot5}+\frac{12653 \pi ^7 i }{2^{4}\cdot3^{9}\cdot5\cdot7}$\\
$\mu _{2,1,2,1,1}$&$-\frac{24697 \zeta _{7}}{2^{4}\cdot3^{5}}-\frac{7\cdot47 \mu _{6,1}}{3\cdot19}-\frac{2\cdot29 \mu _{5,2}}{19}+\frac{2^{4}\cdot41 \pi  i  \mu _{6}}{3^{2}\cdot19}+\pi  i  \mu _{4,2}-\frac{2\cdot29 \zeta _{5} \mu _{2}}{3^{3}}+\frac{2^{4}\cdot5 \pi ^2 \zeta _{5}}{3^{4}}-\frac{ \pi ^2 \mu _{4,1}}{2\cdot3^{2}}-\pi  i  \mu _{2} \mu _{4}+\frac{ \pi ^3 i  \mu _{4}}{2\cdot3^{3}}-\frac{17 \pi  i  \zeta _{3}^2}{2\cdot3^{3}}-\frac{ \pi ^2 \zeta _{3} \mu _{2}}{3^{3}}+\frac{13\cdot47 \pi ^4 \zeta _{3}}{2^{3}\cdot3^{5}\cdot5}+\frac{ \pi ^3 i  \mu _{2}^2}{2^{2}\cdot3^{4}}+\frac{131 \pi ^5 i  \mu _{2}}{2^{3}\cdot3^{6}\cdot5}-\frac{11\cdot317 \pi ^7 i }{2^{3}\cdot3^{7}\cdot7\cdot19}$\\
$\mu _{2,1,1,1,1,1}$&$\frac{659 \zeta _{7}}{2^{4}\cdot3^{4}}+\frac{ \pi  i  \mu _{6}}{3}+\frac{5^{2} \pi ^2 \zeta _{5}}{2^{2}\cdot3^{5}}-\frac{ \pi ^3 i  \mu _{4}}{2\cdot3^{4}}-\frac{\pi ^4 \zeta _{3}}{2^{3}\cdot3^{6}}+\frac{\pi ^5 i  \mu _{2}}{2^{3}\cdot3^{6}\cdot5}-\frac{1567 \pi ^7 i }{2^{4}\cdot3^{9}\cdot5\cdot7}$\\$\RE\mu _{6,1}$&$\frac{4481 \zeta _{7}}{2^{5}\cdot3^{4}}-\frac{ \pi  \I\mu _{6}}{3}-\frac{\pi ^2 \zeta _{5}}{2^{2}\cdot3^{2}}-\frac{7\cdot13 \pi ^4 \zeta _{3}}{2^{4}\cdot3^{5}\cdot5}$\\
$\RE\mu _{5,2}$&$-\frac{1123 \zeta _{7}}{2\cdot3^{4}}+\frac{5 \pi  \I\mu _{6}}{3}+\frac{241 \pi ^2 \zeta _{5}}{2^{3}\cdot3^{5}}+\frac{7\cdot13 \pi ^4 \zeta _{3}}{2^{3}\cdot3^{5}\cdot5}$ \\$\I\mu _{4,1,2}$&$\frac{2^{2}\cdot11 \I\mu _{6,1}}{3\cdot19}+\frac{3\cdot7 \I\mu _{5,2}}{2\cdot19}-\frac{ \pi  \RE\mu _{4,2}}{3}+\frac{ \pi ^2 \I\mu _{4,1}}{2^{2}\cdot3^{2}}-\frac{2^{2} \zeta _{3} \I\mu _{4}}{3}-\frac{ \pi  \zeta _{3}^2}{3^{2}}+\frac{7\cdot29^{2} \pi ^7}{2^{5}\cdot3^{10}\cdot5}$\\\hline\hline\end{tabular}}\end{table}
\item Certain Lyndon MCVs of weight  $8$ satisfy the algebraic relations listed in Table \ref{tab:MCV8}.\begin{table}[h]\caption{Provable reductions for  certain  Lyndon MCVs of weight 8\label{tab:MCV8}}{\scriptsize\begin{tabular}{r@{{\,}={\;}}p{.8\textwidth}}\hline\hline $\mu _{7,1}$&$3\mu _{8}-\frac{7\zeta _{5,3}}{2\cdot3^{3}\cdot5}-\frac{659\pi  i  \zeta _{7}}{2^{4}\cdot3^{5}}-\frac{5^{2} \zeta _{5} \zeta _{3}}{2\cdot3^{3}}-\frac{5\pi ^{3} i  \zeta _{5}}{2\cdot3^{4}}-\frac{17\pi ^{5} i  \zeta _{3}}{2^{3}\cdot3^{6}}-\frac{91631\pi ^{8}}{2^{7}\cdot3^{10}\cdot5^{3}}$\\
$\mu _{5,3}$&$-2\cdot19\mu _{8}-5\mu _{6,2}-\frac{31\zeta _{5,3}}{2\cdot3^{4}\cdot5}+\frac{5\cdot659\pi  i  \zeta _{7}}{2^{4}\cdot3^{4}}+\frac{5^{3} \zeta _{5} \zeta _{3}}{3^{3}}+\frac{5\cdot11\cdot71\pi ^{3} i  \zeta _{5}}{2^{2}\cdot3^{7}}+\frac{5\cdot17\pi ^{5} i  \zeta _{3}}{2^{2}\cdot3^{6}}+\frac{103\cdot44263\pi ^{8}}{2^{6}\cdot3^{10}\cdot5^{3}\cdot7}$\\
$\mu _{5,2,1}$&$-\frac{2\cdot5\cdot29\mu _{8}}{43}-\frac{2^{3}\cdot17\zeta _{5,3}}{3^{4}\cdot5}-2\mu _{6,1,1}+\frac{5\pi  i  \zeta _{7}}{3}+\frac{2\pi  i  \mu _{6,1}}{3}+\frac{ \pi  i  \mu _{5,2}}{3}-\frac{\pi ^{2} \mu _{6}}{2\cdot3}+\frac{5^{2} \zeta _{5} \zeta _{3}}{2\cdot3^{3}}+\frac{2^{2}\cdot17\pi ^{3} i  \zeta _{5}}{3^{7}}+\frac{17\pi ^{5} i  \zeta _{3}}{2^{3}\cdot3^{6}}+\frac{397\cdot275729\pi ^{8}}{2^{7}\cdot3^{10}\cdot5^{3}\cdot7\cdot43}$\\
$\mu _{5,1,2}$&$\frac{2\cdot7\cdot109\mu _{8}}{43}+2\mu _{6,2}+\frac{2\cdot19\zeta _{5,3}}{3^{3}\cdot5}-\frac{7\mu _{6,1,1}}{3}-\frac{983\pi  i  \zeta _{7}}{2^{2}\cdot3^{4}}-\frac{2^{3} \pi  i  \mu _{6,1}}{3^{2}}-\frac{ \pi  i  \mu _{5,2}}{3}+\frac{7\pi ^{2} \mu _{6}}{2\cdot3^{3}}-\frac{5\cdot113\zeta _{5} \zeta _{3}}{2\cdot3^{4}}-\frac{3217\pi ^{3} i  \zeta _{5}}{2^{2}\cdot3^{7}}-\frac{421\pi ^{5} i  \zeta _{3}}{2^{2}\cdot3^{6}\cdot5}-\frac{8087\cdot12979\pi ^{8}}{2^{5}\cdot3^{10}\cdot5^{3}\cdot7\cdot43}$\\
$\mu _{5,1,1,1}$&$-5\mu _{8}+\frac{7\zeta _{5,3}}{2\cdot3^{3}}+2\mu _{6,1,1}+\frac{11\pi  i  \zeta _{7}}{2^{2}\cdot3^{5}}+\frac{71\zeta _{5} \zeta _{3}}{2\cdot3^{3}}+\frac{569\pi ^{3} i  \zeta _{5}}{2^{2}\cdot3^{7}}-\frac{ \pi ^{2} \zeta _{3}^2}{2\cdot3^{3}}+\frac{2\cdot11\pi ^{5} i  \zeta _{3}}{3^{6}\cdot5}+\frac{38729\pi ^{8}}{2^{5}\cdot3^{10}\cdot5^{2}\cdot7}$\\
$\mu _{4,3,1}$&$\frac{509\mu _{8}}{43}+\frac{5\cdot19\zeta _{5,3}}{3^{4}}+\mu _{6,1,1}-\frac{7\cdot13\cdot139\pi  i  \zeta _{7}}{2^{2}\cdot3^{6}}-\frac{2\cdot193\pi  i  \mu _{6,1}}{3^{2}\cdot19}-\frac{2\cdot29\pi  i  \mu _{5,2}}{3\cdot19}+\frac{5\cdot7\cdot13\pi ^{2} \mu _{6}}{2\cdot3^{3}\cdot19}+\frac{5^{2} \zeta _{3} \zeta _{5}}{2\cdot3^{3}}-\frac{5\cdot61\pi ^{3} i  \zeta _{5}}{2^{2}\cdot3^{7}}+\frac{\mu _{4}^2}{2}-\frac{2\pi  i  \zeta _{3} \mu _{4}}{3^{2}}-\frac{19\pi ^{4} \mu _{4}}{2^{3}\cdot3^{4}\cdot5}+\frac{17\pi ^{5} i  \zeta _{3}}{2^{3}\cdot3^{6}}-\frac{271\cdot9371\pi ^{8}}{2^{7}\cdot3^{6}\cdot5\cdot7\cdot19\cdot43}$\\
$\mu _{4,2,1,1}$&$\frac{2^{2}\cdot53\mu _{8}}{43}-\frac{151\zeta _{5,3}}{2\cdot3^{3}\cdot5}-2^{2}\mu _{6,1,1}+\frac{5\cdot839\pi  i  \zeta _{7}}{2^{2}\cdot3^{6}}+\frac{5\cdot7\pi  i  \mu _{6,1}}{3^{2}\cdot19}+\frac{3\pi  i  \mu _{5,2}}{19}-\frac{2\cdot107\pi ^{2} \mu _{6}}{3^{3}\cdot19}-\frac{ \pi ^{2} \mu _{4,2}}{2\cdot3^{2}}-\frac{71\zeta _{5} \zeta _{3}}{3^{3}}-\frac{131\pi ^{3} i  \zeta _{5}}{3^{7}}+\frac{\mu _{4}^2}{2}-\frac{ \pi  i  \zeta _{3} \mu _{4}}{3^{2}}-\frac{11\pi ^{4} \mu _{4}}{2^{3}\cdot3^{4}\cdot5}+\frac{ \pi ^{2} \zeta _{3}^2}{2\cdot3^{2}}-\frac{7\pi ^{5} i  \zeta _{3}}{2\cdot3^{4}\cdot5}+\frac{103\cdot11868151\pi ^{8}}{2^{7}\cdot3^{10}\cdot5^{3}\cdot7\cdot19\cdot43}$\\
$\mu _{4,1,3}$&$-\frac{2^{5}\cdot3\cdot47\mu _{8}}{43}-2^{3}\mu _{6,2}-\frac{29\cdot47\zeta _{5,3}}{2\cdot3^{4}\cdot5}+5\mu _{6,1,1}-\mu _{4,2,2}+\frac{24709\pi  i  \zeta _{7}}{2^{2}\cdot3^{6}}+\frac{2\cdot307\pi  i  \mu _{6,1}}{3^{2}\cdot19}+\frac{2\cdot29\pi  i  \mu _{5,2}}{3\cdot19}-\frac{227\pi ^{2} \mu _{6}}{2\cdot3^{3}\cdot19}+\frac{2\cdot5^{2} \zeta _{3} \zeta _{5}}{3^{2}}+\frac{17\cdot641\pi ^{3} i  \zeta _{5}}{2^{2}\cdot3^{7}}+\frac{ \zeta _{3} \mu _{4,1}}{3}+\frac{5\pi ^{3} i  \mu _{4,1}}{2\cdot3^{4}}-\mu _{4}^2+\frac{2^{2} \pi  i  \zeta _{3} \mu _{4}}{3^{2}}+\frac{19\pi ^{4} \mu _{4}}{2^{2}\cdot3^{4}\cdot5}+\frac{1217\pi ^{5} i  \zeta _{3}}{2^{3}\cdot3^{7}}+\frac{47\cdot487\cdot167221\pi ^{8}}{2^{7}\cdot3^{10}\cdot5^{3}\cdot19\cdot43}$\\
$\mu _{4,1,2,1}$&$\frac{2\cdot311\mu _{8}}{43}+\frac{277\zeta _{5,3}}{2\cdot3^{4}\cdot5}-\frac{5\mu _{6,1,1}}{3}-\frac{3\cdot5\pi  i  \zeta _{7}}{2^{3}}+\frac{5\pi  i  \mu _{6,1}}{3^{2}}+\frac{ \pi  i  \mu _{4,1,2}}{3}-\frac{19\pi ^{2} \mu _{6}}{2\cdot3^{3}}-\frac{ \pi ^{2} \mu _{4,2}}{2\cdot3^{2}}-\frac{2^{3}\cdot17\zeta _{5} \zeta _{3}}{3^{4}}-\frac{2\cdot71\pi ^{3} i  \zeta _{5}}{3^{6}}+\frac{2\zeta _{3} \mu _{4,1}}{3}-\frac{ \pi ^{3} i  \mu _{4,1}}{2\cdot3^{4}}-\frac{3\mu _{4}^2}{2}+\frac{7\pi  i  \zeta _{3} \mu _{4}}{3^{2}}+\frac{139\pi ^{4} \mu _{4}}{2^{3}\cdot3^{5}\cdot5}-\frac{ \pi ^{2} \zeta _{3}^2}{2\cdot3^{3}}+\frac{157\pi ^{5} i  \zeta _{3}}{2^{2}\cdot3^{7}\cdot5}-\frac{31\cdot488797\pi ^{8}}{2^{5}\cdot3^{10}\cdot5^{3}\cdot7\cdot43}$\\
$\mu _{4,1,1,2}$&$\frac{1259\mu _{8}}{43}+3\mu _{6,2}-\frac{2^{2}\cdot13\zeta _{5,3}}{3^{4}\cdot5}-\frac{2\mu _{6,1,1}}{3}-\frac{2^{2}\cdot181\pi  i  \zeta _{7}}{3^{6}}-\frac{5^{2} \pi  i  \mu _{6,1}}{19}-\frac{3\pi  i  \mu _{5,2}}{19}-\frac{ \pi  i  \mu _{4,1,2}}{3}+\frac{347\pi ^{2} \mu _{6}}{3^{3}\cdot19}+\frac{ \pi ^{2} \mu _{4,2}}{3^{2}}-\frac{151\zeta _{5} \zeta _{3}}{3^{4}}-\frac{13\cdot47\pi ^{3} i  \zeta _{5}}{2\cdot3^{6}}-\frac{2^{2} \zeta _{3} \mu _{4,1}}{3}+\frac{ \pi ^{3} i  \mu _{4,1}}{3^{4}}+\frac{3\mu _{4}^2}{2}-\frac{7\pi  i  \zeta _{3} \mu _{4}}{3^{2}}-\frac{139\pi ^{4} \mu _{4}}{2^{3}\cdot3^{5}\cdot5}+\frac{ \pi ^{2} \zeta _{3}^2}{3^{3}}-\frac{3187\pi ^{5} i  \zeta _{3}}{2^{3}\cdot3^{7}\cdot5}-\frac{157\cdot7801231\pi ^{8}}{2^{4}\cdot3^{10}\cdot5^{3}\cdot7\cdot19\cdot43}$\\
$\mu _{4,1,1,1,1}$&$\frac{7\zeta _{5,3}}{2\cdot3^{3}}+\mu _{6,1,1}-\frac{7^{2}\cdot13\pi  i  \zeta _{7}}{2^{3}\cdot3^{4}}+\frac{ \pi  i  \mu _{6,1}}{3}+\frac{17\zeta _{5} \zeta _{3}}{2\cdot3^{3}}+\frac{2^{2} \pi ^{3} i  \zeta _{5}}{3^{4}}-\frac{ \pi ^{3} i  \mu _{4,1}}{2\cdot3^{4}}-\frac{\pi ^{4} \mu _{4}}{2^{3}\cdot3^{4}}-\frac{ \pi ^{2} \zeta _{3}^2}{2^{2}\cdot3^{2}}+\frac{11\pi ^{5} i  \zeta _{3}}{2\cdot3^{5}\cdot5}+\frac{509\pi ^{8}}{2^{5}\cdot3^{10}\cdot5\cdot7}$\\
$\mu _{3,3,2}$&$-\frac{2\cdot13099\mu _{8}}{3\cdot43}-2^{2}\cdot3\mu _{6,2}-\frac{2^{3}\cdot7\cdot83\zeta _{5,3}}{3^{5}\cdot5}+\frac{5^{2}\cdot7\mu _{6,1,1}}{3^{2}}-3\mu _{4,2,2}+\frac{17\cdot1237\pi  i  \zeta _{7}}{2^{2}\cdot3^{5}}+\frac{2\cdot1987\pi  i  \mu _{6,1}}{3^{3}\cdot19}+\frac{2^{3}\cdot7\pi  i  \mu _{5,2}}{19}-\frac{3847\pi ^{2} \mu _{6}}{2\cdot3^{4}\cdot19}+\frac{2^{7}\cdot5^{2} \zeta _{5} \zeta _{3}}{3^{5}}+\frac{7\cdot439\pi ^{3} i  \zeta _{5}}{2\cdot3^{6}}+\frac{2459\pi ^{5} i  \zeta _{3}}{2^{3}\cdot3^{7}}+\frac{11\cdot13\cdot2593\cdot420809\pi ^{8}}{2^{7}\cdot3^{11}\cdot5^{3}\cdot7\cdot19\cdot43}$\\
$\mu _{3,3,1,1}$&$\frac{2\cdot311\mu _{8}}{43}+\frac{7\cdot109\zeta _{5,3}}{2\cdot3^{4}\cdot5}-\frac{5\mu _{6,1,1}}{3}-\frac{5\cdot19\cdot157\pi  i  \zeta _{7}}{2^{2}\cdot3^{6}}-\frac{251\pi  i  \mu _{6,1}}{3^{2}\cdot19}-\frac{3\cdot5\pi  i  \mu _{5,2}}{19}+\frac{479\pi ^{2} \mu _{6}}{2\cdot3^{2}\cdot19}+\frac{ \pi ^{2} \mu _{4,2}}{2\cdot3}+\frac{2\cdot13\zeta _{5} \zeta _{3}}{3^{4}}-\frac{127\pi ^{3} i  \zeta _{5}}{2\cdot3^{6}}-\frac{3\mu _{4}^2}{2}+\frac{ \pi  i  \zeta _{3} \mu _{4}}{3}+\frac{11\pi ^{4} \mu _{4}}{2^{3}\cdot3^{3}\cdot5}-\frac{11\pi ^{2} \zeta _{3}^2}{2\cdot3^{4}}-\frac{1459\pi ^{5} i  \zeta _{3}}{2^{3}\cdot3^{7}\cdot5}-\frac{1831\cdot152681\pi ^{8}}{2^{5}\cdot3^{10}\cdot5^{3}\cdot19\cdot43}$\\
$\mu _{3,2,2,1}$&$-\frac{2^{4}\cdot373\mu _{8}}{3\cdot43}-\frac{2659\zeta _{5,3}}{2\cdot3^{5}\cdot5}+\frac{2\cdot7^{2} \mu _{6,1,1}}{3^{2}}+\frac{11\cdot13\cdot967\pi  i  \zeta _{7}}{2^{3}\cdot3^{7}}+\frac{13\pi  i  \mu _{6,1}}{19}+\frac{2^{4} \pi  i  \mu _{5,2}}{19}-\frac{7\pi  i  \mu _{4,1,2}}{3^{2}}+\frac{2\cdot131\pi ^{2} \mu _{6}}{3^{4}\cdot19}+\frac{5\pi ^{2} \mu _{4,2}}{2\cdot3^{3}}+\frac{2^{2}\cdot283\zeta _{5} \zeta _{3}}{3^{5}}+\frac{2\cdot5\cdot71\pi ^{3} i  \zeta _{5}}{3^{7}}-\frac{2\cdot7\zeta _{3} \mu _{4,1}}{3^{2}}+\frac{7\pi ^{3} i  \mu _{4,1}}{2\cdot3^{5}}+\frac{3^{2} \mu _{4}^2}{2}-\frac{59\pi  i  \zeta _{3} \mu _{4}}{3^{3}}-\frac{7\cdot173\pi ^{4} \mu _{4}}{2^{3}\cdot3^{6}\cdot5}+\frac{ \pi ^{2} \zeta _{3}^2}{2\cdot3^{2}}+\frac{11\cdot461\pi ^{5} i  \zeta _{3}}{2^{3}\cdot3^{8}\cdot5}+\frac{59\cdot71\cdot1748407\pi ^{8}}{2^{7}\cdot3^{10}\cdot5^{3}\cdot7\cdot19\cdot43}$\\
$\mu _{3,2,1,2}$&$-\frac{73\cdot149\mu _{8}}{3\cdot43}-\frac{7\cdot13\mu _{6,2}}{11}+\frac{2^{2}\cdot7\cdot11\zeta _{5,3}}{3^{5}\cdot5}+\frac{7\mu _{6,1,1}}{3^{2}}+\frac{5\cdot17\cdot3121\pi  i  \zeta _{7}}{2^{2}\cdot3^{7}\cdot11}+\frac{163\pi  i  \mu _{6,1}}{3\cdot19}+\frac{5\pi  i  \mu _{5,2}}{19}+\frac{7\pi  i  \mu _{4,1,2}}{3^{2}}-\frac{13\cdot359\pi ^{2} \mu _{6}}{2\cdot3^{4}\cdot19}-\frac{7\pi ^{2} \mu _{4,2}}{3^{3}}+\frac{2^{2}\cdot7^{2}\cdot97\zeta _{5} \zeta _{3}}{3^{5}\cdot11}+\frac{7\cdot89\cdot101\pi ^{3} i  \zeta _{5}}{2\cdot3^{7}\cdot11}+\frac{2^{2}\cdot7\zeta _{3} \mu _{4,1}}{3^{2}}-\frac{7\pi ^{3} i  \mu _{4,1}}{3^{5}}-\frac{3^{2} \mu _{4}^2}{2}+\frac{7\pi  i  \zeta _{3} \mu _{4}}{3}+\frac{139\pi ^{4} \mu _{4}}{2^{3}\cdot3^{4}\cdot5}-\frac{7\pi ^{2} \zeta _{3}^2}{3^{4}}+\frac{7\cdot4447\pi ^{5} i  \zeta _{3}}{3^{8}\cdot5\cdot11}+\frac{7691\cdot110154269\pi ^{8}}{2^{7}\cdot3^{11}\cdot5^{3}\cdot7\cdot11\cdot19\cdot43}$\\
$\mu _{3,2,1,1,1}$&$-\frac{2\cdot5\cdot29\mu _{8}}{43}-\frac{101\zeta _{5,3}}{2\cdot3^{4}}-2\mu _{6,1,1}+\frac{5\cdot29\cdot131\pi  i  \zeta _{7}}{2^{3}\cdot3^{6}}-\frac{79\pi  i  \mu _{6,1}}{3^{2}\cdot19}+\frac{3\pi  i  \mu _{5,2}}{19}-\frac{5^{2}\cdot31\pi ^{2} \mu _{6}}{2\cdot3^{3}\cdot11\cdot19}-\frac{73\zeta _{5} \zeta _{3}}{2\cdot3^{4}}-\frac{19\pi ^{3} i  \zeta _{5}}{2^{2}\cdot3^{5}}+\frac{7\pi ^{3} i  \mu _{4,1}}{2\cdot3^{5}}+\frac{19\pi ^{4} \mu _{4}}{2^{3}\cdot3^{6}}+\frac{19\pi ^{2} \zeta _{3}^2}{2^{2}\cdot3^{4}}-\frac{7^{2}\cdot17\pi ^{5} i  \zeta _{3}}{2^{3}\cdot3^{7}\cdot5}+\frac{2251\cdot100411\pi ^{8}}{2^{7}\cdot3^{11}\cdot5\cdot7\cdot19\cdot43}$\\\hline\hline\end{tabular}}\end{table}\addtocounter{table}{-1}\begin{table}[t]\caption{ \textit{(Continued)}}{\scriptsize\begin{tabular}{r@{{\,}={\;}}p{.8\textwidth}}\hline\hline
$\mu _{3,1,2,2}$&$\frac{7^{2}\cdot103\mu _{8}}{43}+\frac{2\cdot47\mu _{6,2}}{11}+\frac{1931\zeta _{5,3}}{2\cdot3^{4}\cdot5}-3^{2}\mu _{6,1,1}+\mu _{4,2,2}-\frac{173\cdot3253\pi  i  \zeta _{7}}{2\cdot3^{7}\cdot11}-\frac{2^{9}\cdot5\pi  i  \mu _{6,1}}{3^{3}\cdot19}-\frac{2^{2}\cdot5^{2} \pi  i  \mu _{5,2}}{3\cdot19}+\frac{2\pi  i  \mu _{4,1,2}}{3^{2}}+\frac{11\cdot47\pi ^{2} \mu _{6}}{2\cdot3^{4}\cdot19}-\frac{2\pi ^{2} \mu _{4,2}}{3^{3}}-\frac{2\cdot3217\zeta _{5} \zeta _{3}}{3^{4}\cdot11}-\frac{17\cdot6899\pi ^{3} i  \zeta _{5}}{2^{2}\cdot3^{7}\cdot11}+\frac{2^{3} \zeta _{3} \mu _{4,1}}{3^{2}}-\frac{2\pi ^{3} i  \mu _{4,1}}{3^{5}}-\frac{2\pi ^{2} \zeta _{3}^2}{3^{4}}-\frac{217157\pi ^{5} i  \zeta _{3}}{2^{3}\cdot3^{8}\cdot5\cdot11}-\frac{4073\cdot115921373\pi ^{8}}{2^{6}\cdot3^{11}\cdot5^{3}\cdot7\cdot11\cdot19\cdot43}$\\
$\mu _{3,1,2,1,1}$&$\frac{509\mu _{8}}{43}+\frac{2\cdot59\zeta _{5,3}}{3^{4}}+\mu _{6,1,1}-\frac{7\cdot23\cdot37\pi  i  \zeta _{7}}{2^{2}\cdot3^{6}}-\frac{13\cdot31\pi  i  \mu _{6,1}}{3^{2}\cdot19}-\frac{3\cdot5\pi  i  \mu _{5,2}}{19}+\frac{7\cdot179\pi ^{2} \mu _{6}}{2\cdot3^{3}\cdot11\cdot19}-\frac{ \pi ^{2} \mu _{4,2}}{2\cdot3^{2}}+\frac{5\cdot37\zeta _{5} \zeta _{3}}{3^{4}}-\frac{5\cdot113\pi ^{3} i  \zeta _{5}}{2^{2}\cdot3^{6}}+\frac{ \pi ^{3} i  \mu _{4,1}}{3^{5}}+\frac{\mu _{4}^2}{2}-\frac{ \pi  i  \zeta _{3} \mu _{4}}{3^{2}}-\frac{59\pi ^{4} \mu _{4}}{2^{3}\cdot3^{6}\cdot5}+\frac{ \pi ^{2} \zeta _{3}^2}{3^{4}}+\frac{311\pi ^{5} i  \zeta _{3}}{2^{3}\cdot3^{7}\cdot5}-\frac{59\cdot139\cdot402763\pi ^{8}}{2^{7}\cdot3^{11}\cdot5^{2}\cdot7\cdot19\cdot43}$\\
$\mu _{3,1,1,2,1}$&$-\frac{7\cdot11\mu _{8}}{43}-\frac{2\cdot251\zeta _{5,3}}{3^{4}\cdot5}-\frac{2^{3} \mu _{6,1,1}}{3}-\frac{1427\pi  i  \zeta _{7}}{2^{3}\cdot3^{5}}+\frac{353\pi  i  \mu _{6,1}}{3^{2}\cdot19}+\frac{2^{3} \pi  i  \mu _{5,2}}{19}+\frac{ \pi  i  \mu _{4,1,2}}{3}-\frac{2\cdot1511\pi ^{2} \mu _{6}}{3^{3}\cdot11\cdot19}-\frac{ \pi ^{2} \mu _{4,2}}{2\cdot3^{2}}-\frac{379\zeta _{5} \zeta _{3}}{3^{4}}+\frac{179\pi ^{3} i  \zeta _{5}}{2^{2}\cdot3^{5}}+\frac{2\zeta _{3} \mu _{4,1}}{3}-\frac{ \pi ^{3} i  \mu _{4,1}}{2\cdot3^{4}}-\frac{3\mu _{4}^2}{2}+\frac{7\pi  i  \zeta _{3} \mu _{4}}{3^{2}}+\frac{139\pi ^{4} \mu _{4}}{2^{3}\cdot3^{5}\cdot5}+\frac{ \pi ^{2} \zeta _{3}^2}{3^{4}}+\frac{7\pi ^{5} i  \zeta _{3}}{2^{3}\cdot3^{5}}+\frac{70603121\pi ^{8}}{2^{5}\cdot3^{9}\cdot5^{3}\cdot19\cdot43}$\\
$\mu _{3,1,1,1,2}$&$\frac{733\mu _{8}}{43}+2\mu _{6,2}+\frac{2^{2} \zeta _{5,3}}{3^{2}\cdot5}+\frac{5\mu _{6,1,1}}{3}+\frac{8369\pi  i  \zeta _{7}}{2^{3}\cdot3^{6}}-\frac{11\pi  i  \mu _{6,1}}{19}+\frac{2^{2} \pi  i  \mu _{5,2}}{19}-\frac{ \pi  i  \mu _{4,1,2}}{3}+\frac{449\pi ^{2} \mu _{6}}{2\cdot3^{3}\cdot19}+\frac{ \pi ^{2} \mu _{4,2}}{3^{2}}+\frac{41\zeta _{5} \zeta _{3}}{3^{3}}-\frac{7\cdot29\pi ^{3} i  \zeta _{5}}{2\cdot3^{5}}-\frac{2^{2} \zeta _{3} \mu _{4,1}}{3}+\frac{ \pi ^{3} i  \mu _{4,1}}{3^{4}}+\frac{3\mu _{4}^2}{2}-\frac{7\pi  i  \zeta _{3} \mu _{4}}{3^{2}}-\frac{139\pi ^{4} \mu _{4}}{2^{3}\cdot3^{5}\cdot5}-\frac{ \pi ^{2} \zeta _{3}^2}{3^{3}}-\frac{199\pi ^{5} i  \zeta _{3}}{2^{2}\cdot3^{7}}-\frac{29\cdot85587521\pi ^{8}}{2^{7}\cdot3^{9}\cdot5^{3}\cdot7\cdot19\cdot43}$\\
$\mu _{3,1,1,1,1,1}$&$3\mu _{8}+\frac{7\zeta _{5,3}}{2\cdot3^{3}\cdot5}-\frac{4481\pi  i  \zeta _{7}}{2^{4}\cdot3^{5}}+\frac{ \pi  i  \mu _{6,1}}{3}-\frac{29\zeta _{5} \zeta _{3}}{2\cdot3^{3}}+\frac{61\pi ^{3} i  \zeta _{5}}{2^{2}\cdot3^{6}}-\frac{ \pi ^{3} i  \mu _{4,1}}{2\cdot3^{4}}-\frac{\pi ^{4} \mu _{4}}{2^{3}\cdot3^{4}}-\frac{ \pi ^{2} \zeta _{3}^2}{2^{2}\cdot3^{3}}+\frac{11^{2} \pi ^{5} i  \zeta _{3}}{2^{3}\cdot3^{7}\cdot5}-\frac{9001\pi ^{8}}{2^{3}\cdot3^{9}\cdot5^{3}\cdot7}$\\
$\mu _{2,2,2,1,1}$&$11^{2}\mu _{8}+11\mu _{6,2}+\frac{47\zeta _{5,3}}{3^{4}}+\mu _{4,2,2}-\frac{127\cdot503\pi  i  \zeta _{7}}{2\cdot3^{7}}-\frac{13\cdot29\pi  i  \mu _{6,1}}{3^{3}\cdot19}-\frac{41\pi  i  \mu _{5,2}}{3\cdot19}+\frac{7\pi  i  \mu _{4,1,2}}{3^{2}}-11\mu _{6} \mu _{2}-\frac{2^{2}\cdot5^{2}\cdot7\cdot31\pi ^{2} \mu _{6}}{3^{4}\cdot11\cdot19}-\mu _{2} \mu _{4,2}-\frac{7\pi ^{2} \mu _{4,2}}{3^{3}}-\frac{5^{2}\cdot31\zeta _{3} \zeta _{5}}{3^{4}}+\frac{7\cdot11\pi  i  \zeta _{5} \mu _{2}}{3^{3}}-\frac{2^{2}\cdot199\pi ^{3} i  \zeta _{5}}{3^{6}}-\frac{7\pi  i  \mu _{2} \mu _{4,1}}{3^{2}}-\frac{2^{2} \pi ^{3} i  \mu _{4,1}}{3^{5}}+\frac{ \mu _{4} \mu _{2}^2}{2}+\frac{2\cdot11\pi  i  \zeta _{3} \mu _{4}}{3^{3}}-\frac{2^{2} \pi ^{2} \mu _{4} \mu _{2}}{3^{3}}-\frac{\pi ^{4} \mu _{4}}{3^{5}\cdot5}+\zeta _{3}^2\mu _{2}-\frac{ \pi  i  \zeta _{3} \mu _{2}^2}{2\cdot3^{2}}-\frac{5\pi ^{2} \zeta _{3}^2}{3^{3}}+\frac{2\cdot5\pi ^{3} i  \zeta _{3} \mu _{2}}{3^{5}}-\frac{7\cdot241\pi ^{5} i  \zeta _{3}}{2^{2}\cdot3^{7}\cdot5}-\frac{ \pi ^{2} \mu _{2}^3}{2^{2}\cdot3^{3}}-\frac{11\pi ^{4} \mu _{2}^2}{2^{4}\cdot3^{4}\cdot5}+\frac{17851\pi ^{6} \mu _{2}}{2^{4}\cdot3^{8}\cdot5\cdot7}-\frac{11\cdot31\cdot857957\pi ^{8}}{2^{7}\cdot3^{11}\cdot5^{2}\cdot7\cdot19}$\\
$\mu _{2,2,1,2,1}$&$-\frac{7\cdot19\cdot293\mu _{8}}{3\cdot43}-3\cdot7\mu _{6,2}-\frac{509\zeta _{5,3}}{3^{5}}+\frac{5^{2}\cdot7\mu _{6,1,1}}{3^{2}}-3\mu _{4,2,2}+\frac{11\cdot67\cdot691\pi  i  \zeta _{7}}{2^{3}\cdot3^{7}}+\frac{2251\pi  i  \mu _{6,1}}{3^{3}\cdot19}+\frac{2^{2}\cdot31\pi  i  \mu _{5,2}}{3\cdot19}-\frac{2\cdot5\pi  i  \mu _{4,1,2}}{3^{2}}+3\cdot7\mu _{6} \mu _{2}+\frac{23\cdot139\pi ^{2} \mu _{6}}{2\cdot3^{2}\cdot11\cdot19}+3\mu _{2} \mu _{4,2}+\frac{7\pi ^{2} \mu _{4,2}}{3^{3}}+\frac{11\cdot673\zeta _{3} \zeta _{5}}{3^{5}}-\frac{2\cdot17\pi  i  \zeta _{5} \mu _{2}}{3^{2}}+\frac{2^{2}\cdot1861\pi ^{3} i  \zeta _{5}}{3^{7}}-\frac{2\cdot7\zeta _{3} \mu _{4,1}}{3^{2}}+\frac{ \pi  i  \mu _{2} \mu _{4,1}}{3}+\frac{7\pi ^{3} i  \mu _{4,1}}{2\cdot3^{5}}-\frac{3\mu _{4} \mu _{2}^2}{2}-\frac{2^{2}\cdot5\pi  i  \zeta _{3} \mu _{4}}{3^{3}}+\frac{ \pi ^{2} \mu _{4} \mu _{2}}{3^{2}}+\frac{13\pi ^{4} \mu _{4}}{2\cdot3^{6}\cdot5}-\frac{7\zeta _{3}^2\mu _{2}}{3^{2}}+\frac{5\pi  i  \zeta _{3} \mu _{2}^2}{2\cdot3^{2}}+\frac{2\pi ^{2} \zeta _{3}^2}{3^{4}}-\frac{2\cdot23\pi ^{3} i  \zeta _{3} \mu _{2}}{3^{5}}+\frac{4937\pi ^{5} i  \zeta _{3}}{3^{8}\cdot5}+\frac{ \pi ^{2} \mu _{2}^3}{2^{2}\cdot3^{3}}+\frac{7\cdot17\pi ^{4} \mu _{2}^2}{2^{4}\cdot3^{5}\cdot5}-\frac{37591\pi ^{6} \mu _{2}}{2^{4}\cdot3^{8}\cdot5\cdot7}+\frac{4561\cdot1980073\pi ^{8}}{2^{5}\cdot3^{11}\cdot5^{2}\cdot7\cdot19\cdot43}$\\
$\mu _{2,2,1,1,1,1}$&$-2\cdot11\mu _{8}-\mu _{6,2}-\frac{2\zeta _{5,3}}{5}+\frac{97\cdot113\pi  i  \zeta _{7}}{2^{3}\cdot3^{5}}+\frac{ \pi  i  \mu _{5,2}}{3}+\mu _{6} \mu _{2}+\frac{\pi ^{2} \mu _{6}}{2\cdot3^{2}}+\frac{ \pi ^{2} \mu _{4,2}}{2\cdot3^{2}}+2\zeta _{3} \zeta _{5}-\frac{5^{2} \pi  i  \zeta _{5} \mu _{2}}{2\cdot3^{4}}+\frac{5\cdot17\pi ^{3} i  \zeta _{5}}{2\cdot3^{7}}+\frac{7\pi ^{3} i  \mu _{4,1}}{2\cdot3^{5}}-\frac{ \pi ^{2} \mu _{4} \mu _{2}}{2\cdot3^{2}}+\frac{11\pi ^{4} \mu _{4}}{2^{2}\cdot3^{6}}+\frac{ \pi ^{3} i  \zeta _{3} \mu _{2}}{2\cdot3^{5}}-\frac{2^{2} \pi ^{5} i  \zeta _{3}}{3^{7}\cdot5}+\frac{\pi ^{4} \mu _{2}^2}{2^{4}\cdot3^{5}}-\frac{17\cdot19\pi ^{6} \mu _{2}}{2^{4}\cdot3^{7}\cdot5\cdot7}+\frac{101\cdot173\cdot967\pi ^{8}}{2^{7}\cdot3^{11}\cdot5^{3}\cdot7}$\\
$\mu _{2,1,2,1,1,1}$&$2^{3}\cdot3^{2}\mu _{8}+5\mu _{6,2}+\frac{29^{2} \zeta _{5,3}}{2\cdot3^{4}\cdot5}-\frac{367\cdot409\pi  i  \zeta _{7}}{2^{4}\cdot3^{6}}-\frac{7\cdot47\pi  i  \mu _{6,1}}{3^{2}\cdot19}-\frac{2\cdot29\pi  i  \mu _{5,2}}{3\cdot19}-5\mu _{2} \mu _{6}-\frac{5\cdot23\pi ^{2} \mu _{6}}{2\cdot3^{3}\cdot19}-\frac{ \pi ^{2} \mu _{4,2}}{2\cdot3}-\frac{19\cdot41\zeta _{5} \zeta _{3}}{2\cdot3^{4}}+\frac{7\cdot11\pi  i  \zeta _{5} \mu _{2}}{3^{4}}-\frac{7^{2}\cdot13\pi ^{3} i  \zeta _{5}}{2\cdot3^{6}}-\frac{ \pi ^{3} i  \mu _{4,1}}{2\cdot3^{4}}+\frac{ \pi ^{2} \mu _{2} \mu _{4}}{2\cdot3}-\frac{\pi ^{4} \mu _{4}}{2\cdot3^{5}}+\frac{17\pi ^{2} \zeta _{3}^2}{2^{2}\cdot3^{4}}-\frac{ \pi ^{3} i  \zeta _{3} \mu _{2}}{3^{5}}-\frac{1609\pi ^{5} i  \zeta _{3}}{2^{3}\cdot3^{7}\cdot5}-\frac{\pi ^{4} \mu _{2}^2}{2^{4}\cdot3^{5}}+\frac{11\cdot199\pi ^{6} \mu _{2}}{2^{4}\cdot3^{7}\cdot5\cdot7}-\frac{587\cdot680881\pi ^{8}}{2^{7}\cdot3^{10}\cdot5^{3}\cdot7\cdot19}$\\
$\mu _{2,1,1,1,1,1,1}$&$\mu _{8}-\frac{7^{2}\cdot13\pi  i  \zeta _{7}}{2^{4}\cdot3^{5}}-\frac{\pi ^{2} \mu _{6}}{2\cdot3^{2}}+\frac{5^{2} \pi ^{3} i  \zeta _{5}}{2^{2}\cdot3^{7}}+\frac{\pi ^{4} \mu _{4}}{2^{3}\cdot3^{5}}-\frac{\pi ^{5} i  \zeta _{3}}{2^{3}\cdot3^{7}\cdot5}-\frac{\pi ^{6} \mu _{2}}{2^{4}\cdot3^{8}\cdot5}-\frac{11\cdot1061\pi ^{8}}{2^{7}\cdot3^{9}\cdot5^{2}\cdot7}$\\$\RE\mu _8$&$\frac{127\cdot1093\pi ^8}{2^{8}\cdot3^{10}\cdot5^{2}\cdot7}$\\
$\RE\mu _{6,1,1}$&$-\frac{7\zeta _{5,3}}{2^{2}\cdot3^{3}}-\frac{ \pi  \I\mu _{6,1}}{3}-\frac{179\zeta _{5} \zeta _{3}}{2^{2}\cdot3^{3}}+\frac{ \pi ^2\zeta _{3}^2}{2^{3}\cdot3^{2}}+\frac{65629\pi ^8}{2^{5}\cdot3^{10}\cdot5^{2}\cdot7}$\\
$\RE\mu _{4,2,2}$&$-\frac{11\RE\mu _{6,2}}{2}-\frac{2\cdot241\zeta _{5,3}}{3^{4}\cdot5}-\frac{2\cdot7\cdot47\pi  \I\mu _{6,1}}{3^{2}\cdot19}-\frac{2^{3}\cdot5\pi  \I\mu _{5,2}}{3\cdot19}+\frac{ \pi ^2\RE\mu _{4,2}}{2^{2}\cdot3^{2}}-\frac{2\cdot29\zeta _{3} \zeta _{5}}{3^{3}}+\I\mu _{4}{}^2-\frac{2^{2} \pi  \zeta _{3} \I\mu _{4}}{3^{2}}+\frac{ \pi ^2\zeta _{3}^2}{2\cdot3^{2}}+\frac{1471\cdot15919\pi ^8}{2^{9}\cdot3^{11}\cdot5^{3}\cdot7}$\\$\I\mu _{6,2}$&$-11\I\mu _{8}+\frac{659\pi  \zeta _{7}}{2^{3}\cdot3^{4}}+\frac{ \pi ^2\I\mu _{6}}{2^{2}\cdot3^{2}}+\frac{2\cdot5\pi ^3\zeta _{5}}{3^{4}}+\frac{17\pi ^5\zeta _{3}}{2^{2}\cdot3^{6}}$ \\\hline\hline \end{tabular}}\end{table}
\end{enumerate}
\end{proposition}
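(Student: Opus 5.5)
I would prove Proposition \ref{prop:Lyn78} with the same machinery as Proposition \ref{prop:Lyn56}, run at lengths $7$ and $8$. The weight-$\leq6$ reductions of Tables \ref{tab:MCV2345}--\ref{tab:MCV6} are taken as already established input. For each Lyndon word $\bm\alpha\in\mathfrak L_7$ (resp.\ $\mathfrak L_8$) on the alphabet $\{0,1\}$, I form the functions $g_{\bm\alpha}$, $g^R_{\bm\alpha}$ and $g^I_{\bm\alpha}$. Each of these vanishes identically at $z=\varrho$, because the involutions $z\mapsto 1-z$ and $z\mapsto z/(z-1)$ send $\varrho$ to $\bar\varrho=1/\varrho$, and $G(\bm\alpha;\bar z)=\overline{G(\bm\alpha;z)}$ for real letters. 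Each relation is processed in three steps:
\begin{itemize}
\item fibrate in $z$ with \texttt{HPLConvertToSimplerArgument};
\item apply Lyndon word decomposition with \texttt{HPLReduceToMinimalSet}, which is still available at weight $8$;
\item substitute $z=\varrho$, using $G(0;\varrho)=\frac{\pi i}{3}$ and $G(1;\varrho)=-\frac{\pi i}{3}$ together with all lower-weight table entries.
\end{itemize}
The result is an inhomogeneous $\mathbb Q$-linear system whose unknowns are the weight-$w$ Lyndon MCVs $\mu_{a_1,\dots,a_n}$. Its coefficients involve $\pi i$, MZVs, and products of lower-weight MCVs.

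The key steps are as follows. First, I compute the rank of the complex system $\{\Lyn\fib_{z=\varrho}g_{\bm\alpha}(z)=0\}_{\bm\alpha\in\mathfrak L_w}$. The unknowns at $w=7$ are the $18$ Lyndon words of length $7$, and at $w=8$ the $30$ Lyndon words of length $8$. For $w=7$ I aim for $\mu_7$ to be expressible while $\mu_{6,1}$, $\mu_{5,2}$ and $\mu_{4,1,2}$ remain free. For $w=8$ the free unknowns should be $\mu_8,\mu_{6,2},\mu_{6,1,1},\mu_{4,2,2}$; by contrast, $\mu_{7,1}$ and $\mu_{5,3}$ must be eliminated, and both appear in the left column of Table \ref{tab:MCV8}. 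In keeping with footnote \ref{fn:irred_prim}, the free unknowns are chosen lexicographically. Second, where the rank falls short of these targets, as it did at weights $5$ and $6$, I add bonus relations of the form
\[
\vfib_{z=\varrho}\Big[G(\boldsymbol 0_{a},z,\boldsymbol0_{b},z;1)-G\big(\boldsymbol0_a,z,\boldsymbol0_b,1-\tfrac1z;1\big)\Big]=0,
\]
with $a+b+2=w$, together with variants that place the letter $z$ at other positions. Each is evaluated in \texttt{HyperInt} over $\mathbb Q(i,\sqrt3)$ by the three-step workflow described before. I keep adding such relations until the rank saturates at (number of unknowns) minus (number of irreducibles). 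Third, I solve for the reducible MCVs. Fourth, I separate real and imaginary parts using the systems from $g^R_{\bm\alpha}$ and $g^I_{\bm\alpha}$, treating $\RE G$ and $\I G$ as independent unknowns. This yields the last rows of the tables: $\RE\mu_{6,1}$, $\RE\mu_{5,2}$ and $\I\mu_{4,1,2}$ at weight $7$, and $\RE\mu_8$, $\RE\mu_{6,1,1}$, $\RE\mu_{4,2,2}$ and $\I\mu_{6,2}$ at weight $8$. The denominators $19$ and $43$ in the tables are the determinants this elimination should produce.

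The main obstacle is the rank deficit in the second step. There is no a priori guarantee that the $g_{\bm\alpha}$ relations, even supplemented by a handful of $\vfib$ relations, generate every true relation. Choosing which bonus integrands to add is heuristic, and at weight $8$ the \texttt{HyperInt} fibrations over the splitting field become computationally heavy. I would first try the natural analogues $G(\boldsymbol0_{w-3},z,0,z;1)$-type differences. If those are insufficient, I would try the other two-$z$ placements, and possibly depth-$3$ insertions $G(\dots,z,\dots,z,\dots,z;1)$. Every relation produced this way is a proven identity, since it comes from fibration of an identically vanishing function. It therefore suffices to reach full rank, which can be confirmed by checking that the count of remaining free unknowns matches Broadhurst's bounds. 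A final numerical check of each table entry to high precision guards against transcription errors.
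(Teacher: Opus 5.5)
Your proposal matches the paper's proof: it solves the $\{\Lyn\fib_{z=\varrho}g_{\bm\alpha}(z)=0\}$ systems (the paper reports ranks $12$ and $20$ at weights $7$ and $8$), adds $\vfib$ bonus relations to reach ranks $15$ and $26$, and then uses the $g^R_{\bm\alpha}$ and $g^I_{\bm\alpha}$ systems to obtain the real and imaginary entries. The bonus relations that suffice in the paper are your first guesses, $G(\boldsymbol0_a,z,\boldsymbol0_{w-2-a},z;1)-G(\boldsymbol0_a,z,\boldsymbol0_{w-2-a},1-\frac1z;1)$ with $a\in\{2,3,4\}$ at $w=7$ and $a\in\{2,3,4,5\}$ at $w=8$; at weight $8$ the paper adds two three-$z$ relations, comparing $G(0,0,z,0,z,0,0,z;1)$ with the versions in which the last one or the last two letters $z$ become $1-\frac1z$.
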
\begin{proof}\begin{enumerate}[leftmargin=*,  label=(\alph*),ref=(\alph*),
widest=d, align=left] \item
At weight $7$, the linear system       $ \{\Lyn\fib_{z=\varrho} g_{\bm \alpha}(z)=0\}_{\bm \alpha\in\mathfrak L_{7}}$ has rank $12$. To deduce the top 15 entries in Table \ref{tab:MCV7}, we need the assistance from \begin{align}
\vfib_{z=\varrho}\left[ G(\boldsymbol0_{a},z,\boldsymbol0_{5-a},z;1)-G\left( \boldsymbol0_{a}, z,\boldsymbol0_{5-a},1-\frac{1}{z};1\right) \right]=0,
\end{align}  where $a\in\{2,3,4\}$. The remaining entries  are natural consequences of       $ \{\Lyn\fib_{z=\varrho} g_{\bm \alpha}^{R}(z)=0\}_{\bm \alpha\in\mathfrak L_7}$    and      $ \{\Lyn\fib_{z=\varrho} g_{\bm \alpha}^{I}(z)=0\}_{\bm \alpha\in\mathfrak L_7}$. 
\item At weight $8$, the linear system       $ \{\Lyn\fib_{z=\varrho} g_{\bm \alpha}(z)=0\}_{\bm \alpha\in\mathfrak L_{8}}$ has rank $20$.  To arrive at the top $ 26$ entries of Table \ref{tab:MCV8}, we need  to enlist the help from \begin{align}
\vfib_{z=\varrho}\left[ G(\boldsymbol0_{a},z,\boldsymbol0_{6-a},z;1)-G\left( \boldsymbol0_{a}, z,\boldsymbol0_{6-a},1-\frac{1}{z};1\right) \right]=0,
\end{align} where $a\in\{2,3,4,5\}$, as well as \begin{align}
\vfib_{z=\varrho}\left[ G(0,0,z,0,z,0,0,z;1)-G\left( 0,0, z,0,z,0,0,1-\frac{1}{z};1\right) \right]=0
\end{align}and \begin{align}
\vfib_{z=\varrho}\left[ G(0,0,z,0,z,0,0,z;1)-G\left( 0,0, z,0,1-\frac{1}{z},0,0,1-\frac{1}{z};1\right) \right]=0.
\end{align}The rest follow from       $ \{\Lyn\fib_{z=\varrho} g_{\bm \alpha}^{R}(z)=0\}_{\bm \alpha\in\mathfrak L_8}$    and      $ \{\Lyn\fib_{z=\varrho} g_{\bm \alpha}^{I}(z)=0\}_{\bm \alpha\in\mathfrak L_8}$.\qedhere\end{enumerate}
\end{proof}\begin{remark}Thus far, we may verify all the entries of Tables \ref{tab:A1n} and \ref{tab:Asn}, completely reducing the right-hand sides of \eqref{eq:A1n_Z(6)} and \eqref{eq:Asn_Z(6)} via the algebraic relations proved in Propositions \ref{prop:Lyn56} and \ref{prop:Lyn78}. Via the same approach (cf.\  the more cost-effective method in \S\ref{subsubsec:desc1}), we may also confirm Table  \ref{tab:A2nA4n} up to weight $8$. As a by-product,  Table \ref{tab:Au1.5b} grows out of Lyndon decompositions for the MCV summands in  $ \mathsf A_{w,n}$ [see \eqref{eq:Au1.5b}] and the reductions of Lyndon MCVs in Tables \ref{tab:MCV2345}--\ref{tab:MCV8}.
\eor\end{remark}

\begin{remark}In \cite[Theorem 3.3]{BorweinBroadhurstKamnitzer2001}, Borwein--Broadhurst--Kamnitzer stated \begin{align}
\mathscr A_8=-\frac{2\cdot7\zeta _{5,3}}{3\cdot5}-2^{2}\pi  \I\mu _{6,1}-\frac{2\cdot19\zeta _{5} \zeta _{3}}{3}+\frac{ \pi ^2\zeta _{3}^2}{3^{2}}+\frac{691\cdot5011\pi ^8}{2^{7}\cdot3^{9}\cdot5^{3}\cdot7},\label{eq:A8BBK}
\end{align}without elaborating on its analytic proof. Evaluating the right-hand side of \eqref{eq:AsLs} by the  
\texttt{logsine} package of Borwein--Straub  \cite{BorweinStraub2011ISSAC}, we get\begin{align}
\mathscr A_8=2^{2}\cdot3^{2}\RE\mu _{7,1}-2^{2}\pi  \I\mu _{6,1}+2^{2}\zeta _{5} \zeta _{3}+\frac{ \pi ^2\zeta _{3}^2}{3^{2}}-\frac{266677\pi ^8}{2^{7}\cdot3^{9}\cdot5^{2}\cdot7}.\label{eq:A8BS}
\end{align} In view of  $\RE\mu _8=\frac{127\cdot1093\pi ^8}{2^{8}\cdot3^{10}\cdot5^{2}\cdot7} $, the real part of the top entry in  Table \ref{tab:MCV8} simplifies to \begin{align}
\RE\mu _{7,1}={}&-\frac{7 \zeta _{5,3}}{2\cdot3^{3}\cdot5}-\frac{5^{2} \zeta_{5} \zeta_{3}}{2\cdot3^{3}}+\frac{13\cdot61487 \pi ^8}{2^{8}\cdot3^{10}\cdot5^{3}\cdot7},\label{eq:mu71}
\end{align}which reveals the equivalence between \eqref{eq:A8BBK} and \eqref{eq:A8BS}.  We are unable to reconcile the right-hand sides  of \eqref{eq:A8BBK} and \eqref{eq:A8BS} solely on the basis  of \cite[Theorems 4.2 and 4.4]{BorweinBroadhurstKamnitzer2001}. Neither have we located a definitive proof of the reduction for $ \mu_{7,1}$  in previous literature.    \eor\end{remark}
\begin{remark}Broadhurst's  numerical experiments on   certain members in $ \mathfrak Z_w(6)$ (up to  $ w=36$) led him to conjecture that  $ \RE\mu_{w-1,1}\in\mathfrak Z_w(1)$, when $w$ is even \cite[Conjecture 5]{Broadhurst2014MDV}. As the MCVs $ \RE\mu_{w-1,1}\in\mathfrak Z_w(6)$ are (numerically)  MZVs in disguise, Broadhurst referred to them as ``honorary MZVs''. Note that Broadhurst's  (empirical) formula for $ \RE\mu _{7,1}$ (cf.\ \cite[(30)]{Broadhurst2014MDV} or \cite[p.~13]{Broadhurst2015MDV}) is equivalent to \eqref{eq:mu71}.  
\eor\end{remark}

\subsection{Clausen descents\label{subsec:descClausen}}The purpose of this subsection is two-fold. In \S\ref{subsubsec:desc3}, we will upgrade the weaker forms of Theorems \ref{thm:A1n}--\ref{thm:Asn} (proved in \S\ref{subsec:Z(6)}) to their full-fledged forms. In  \S\ref{subsubsec:desc1}, we  will prove Theorem \ref{thm:A2nA4n}, after taking a fresh look at    \eqref{eq:A2nMZV}---a result of Hou--Sun \cite[Theorem 1.3]{HouSun2026}. Both tasks share the same theme: reduction of the levels for ($ \mathbb Q$-linear combinations of) MCVs.\subsubsection{Descents to level 3\label{subsubsec:desc3}}At weight $1$, one has   $ \mu_1=\frac{\pi i}{3}\in \mathfrak Z_1(3)$.  For \ $ 2\leq a+n\leq5$, we have  $ \mu_{a,\mathbf 1_n}\in\mathfrak Z_{a+n}(3)$, in view of the following identities with a third root of unity $ \omega\colonequals  e^{2\pi i/3}$:{\allowdisplaybreaks
\begin{align}
&\left\{\begin{array}{@{}r@{{}={}}l}
\mu_{2,\mathbf 1_0} &\frac{3i\I\Li_2(\omega)}{2}+\frac{\pi^2}{2^{2}\cdot3^2},\\[2pt]
\mu_{1,1_1}&-\frac{\pi^2}{2\cdot3^{2}},
\end{array}\right.\label{eq:mu2}\\&\left\{\begin{array}{@{}r@{{}={}}l}
\mu_{3,\mathbf 1_0} &\frac{\zeta _{3}}{3}+\frac{5\pi ^3 i }{2\cdot3^{4}},\\[2pt]
\mu_{2,\mathbf 1_1}&\frac{2 \zeta _{3}}{3}-\frac{\pi\I\Li_2(\omega) }{2}+\frac{ \pi ^3 i}{2^{2}\cdot3^{4}},\\[2pt]\mu_{1,\mathbf 1_2}&-\frac{\pi ^3 i}{2\cdot3^{4}} ,
\end{array}\right.\\&\left\{\begin{array}{@{}r@{{}={}}l}
\mu_{4,\mathbf 1_0} &\frac{3^{2}i\I\Li_4(\omega)}{2^{3}}+\frac{7\cdot13 \pi ^4}{2^{4}\cdot3^{5}\cdot5},\\
[2pt]\mu_{3,\mathbf 1_1}&\frac{3^{2}i\I\Li_4(\omega)}{2^{3}}-\frac{2\pi i\zeta_{3}}{3^{2}}-\frac{23 \pi ^4}{2^{4}\cdot3^{5}\cdot5},\\[2pt]\mu_{2,\mathbf 1_2}&\frac{3^{2}i\I\Li_4(\omega)}{2^{3}}-\frac{\pi i\zeta_{3}}{3^{2}}-\frac{\pi ^2 i\I\Li_2(\omega)}{2^{2}\cdot3} -\frac{\pi ^4}{2^{4}\cdot3^{5}},\\[2pt]\mu_{1,\mathbf 1_3} &\frac{\pi ^4}{2^{3}\cdot3^{5}},
\end{array}\right.\\&\left\{\begin{array}{@{}r@{{}={}}l}
\mu_{5,\mathbf 1_0} &\frac{5^{2} \zeta _{5}}{2\cdot3^{3}}+\frac{17  \pi ^5i}{2^{3}\cdot3^{6}},\\
[2pt]\mu_{4,\mathbf 1_0}&\frac{43 \zeta_{5}}{2^{2}\cdot3^2}+\frac{3^{3}i\I[3\Li_{4,1}(\omega,1)+\Li_{3,2}(\omega,1)]}{2\cdot13}-\frac{3\pi\I\Li_4(\omega)}{2^{3}}-\frac{\pi ^2 \zeta _{3}}{2^{2}\cdot3^2}+\frac{151 \pi ^5i}{2^{4}\cdot3^{6}\cdot13},\\[2pt]\mu_{3,\mathbf 1_2}& \frac{29 \zeta_{5}}{2^{2}\cdot3^2}+\frac{3^{3}i\I[3\Li_{4,1}(\omega,1)+\Li_{3,2}(\omega,1)]}{2\cdot13}-\frac{3\pi\I\Li_4(\omega)}{2^{3}}+\frac{\pi ^2 \zeta _{3}}{2^{2}\cdot3^3}+\frac{599  \pi ^5i}{2^{4}\cdot3^{6}\cdot5\cdot13},\\[2pt]\mu_{2,\mathbf 1_3} &\frac{29 \zeta _{5}}{2\cdot3^{3}}-\frac{3\pi\I\Li_4(\omega)}{2^{3}}+\frac{\pi ^2 \zeta _{3}}{2\cdot3^{3}}+\frac{\pi^{3}\I\Li_2(\omega)}{2^{2}\cdot3^3}-\frac{ \pi ^5i}{2^{4}\cdot3^{6}\cdot5},\\[2pt]\mu_{1,\mathbf 1_4}&\frac{\pi ^5i}{2^{3}\cdot3^{6}\cdot5},
\end{array}\right.\label{eq:mu5}
\end{align}
}which are all verifiable by Au's \texttt{MultipleZetaValues} package \cite{Au2025a,Au2022a}. Here, for an arbitrary positive integer  $a$, one may sharpen $ \mu_a\in \mathfrak Z_{a}(6)$ into $ \mu_a\in \mathfrak Z_{a}(3)$  via \begin{align}
\I \mu_a={}&\left( 1+\frac{1}{2^{a-1}} \right)\I\Li_a(\omega)=\left( 1+\frac{1}{2^{a-1}} \right)\frac{\sqrt{3}L(\chi_{-3},a)}{2}\in i \mathfrak Z_a(3)\label{eq:Imu_a}
\intertext{and}
\RE \mu_a={}&-\left( 1-\frac{1}{2^{a-1}} \right)\RE\Li_a(\omega)=\left( 1-\frac{1}{2^{a-1}} \right)\left( 1-\frac{1}{3^{a-1}} \right)\frac{\zeta_a}{2}\in\mathfrak Z_a(1),\label{eq:Rmu_a}  
\end{align}where
\begin{align}
L(\chi_{-3},s)\colonequals \sum_{n=0}^\infty\left[\frac{1}{(3n+1)^s}-\frac{1}{(3n+2)^s}\right]
\end{align}
is a special Dirichlet $L$-function. In particular, when $a=1$, both  $ \RE\mu_a$ and $1-\frac{1}{2^{a-1}} $ vanish.

More generally, each MCV  descends  to a $ \mathbb Z$-linear combination of level-$3$ CMZVs,
as demonstrated in the proposition below.\begin{proposition}\label{prop:Cd}
For any positive  integers $ a_1,\dots,a_n$, we have \begin{align}
\begin{split}\mu_{a_1,\dots,a_n}={}&(-1)^{n}G\big(\boldsymbol0_{a_{1}-1},1,\boldsymbol0_{a_{2}-1},1,\dots,\boldsymbol0_{a_{n}-1},1;e^{\pi i/3}\big)\\\in{}&\Span_\mathbb Z\left\{ G(\alpha_{1},\dots,\alpha_{m};1)G(\beta_{1},\dots,\beta_{w-m};\omega)\middle|\begin{smallmatrix}w=a_1+\dots+a_n,m\in\mathbb Z\cap[0,w]\\\alpha_1,\dots,\alpha_m\in\left\{\omega,\omega^2\right\}\\\beta_{1},\dots,\beta_{w-m}\in\{0,1\}\end{smallmatrix}\right\}\\\subseteq{}&\mathfrak Z_{a_1+\dots+a_n}(3)
,
\end{split}\label{eq:MCVinCMZV3}\end{align} where $ \omega\colonequals  e^{2\pi i/3}$. 
\end{proposition}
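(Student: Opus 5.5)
The plan is to prove the first (``$=$'') line of \eqref{eq:MCVinCMZV3} directly from the definitions, and then to move the integration path from $[0,\varrho]$, with $\varrho=e^{\pi i/3}=1+\omega$, onto paths whose endpoints and singular letters are third roots of unity. I expect the second step to be the hard one.

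\textbf{The ``$=$'' part.} This is \eqref{eq:MPL_GPL} with $z_1=\varrho$ and $z_2=\dots=z_n=1$, read at the point $\varrho$ instead of $1$. Scaling via \eqref{eq:GPLscaling} with $\sigma=\varrho$ turns $G(\boldsymbol0_{a_1-1},1/\varrho,\dots,\boldsymbol0_{a_n-1},1/\varrho;1)$ into $G(\boldsymbol0_{a_1-1},1,\dots,\boldsymbol0_{a_n-1},1;\varrho)$, which is the GPL in \eqref{eq:MCVinCMZV3}. Nothing else is needed here.

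\textbf{Path splitting.} I will use path composition for iterated integrals, which follows from the recursion \eqref{eq:GPL_rec} by splitting the outermost integral:
\[
G(\gamma_1,\dots,\gamma_w;\varrho)=\sum_{k=0}^{w}I_{[\omega,\varrho]}(\gamma_1,\dots,\gamma_k)\,G(\gamma_{k+1},\dots,\gamma_w;\omega).
\]
The right factors $G(\beta_1,\dots,\beta_{w-k};\omega)$ with $\beta_j\in\{0,1\}$ already have the required shape. They lie in $\mathfrak Z_{w-k}(3)$ by \eqref{eq:Zk(N)''}, after the unshuffling of trailing zeros described there, which uses only integer coefficients.

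It remains to rewrite each short-path iterated integral $I_{[\omega,\varrho]}$ as a $\mathbb Z$-combination of products of the form $G(\alpha_1,\dots,\alpha_m;1)\,G(\beta;\omega)$ with $\alpha_i\in\{\omega,\omega^2\}$. On the segment, $\varrho-\omega=1$, so I parametrize $x=\omega+u$ with $u\in[0,1]$. The letters $0,1$ then become $-\omega=1+\omega^2$ and $1-\omega$, which are not cube roots of unity.

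\textbf{Main obstacle.} The core difficulty is to find a change of variables on this segment that sends the relevant singular points into $\{0,\omega,\omega^2,\infty\}$ and fixes the endpoints $0$ and $1$. My first attempt would be to pull the $\{0,1,\infty\}$ integrand back by a Möbius map $y\mapsto x$ built from the anharmonic symmetries $x\mapsto 1-x$, $x\mapsto 1/x$ and $x\mapsto x/(x-1)$, all of which permute $\{\varrho,\bar\varrho\}$. Concretely, I would consider $x=1-\omega^2 y$, which sends $y=0\mapsto1$ and $y=1\mapsto\varrho$. This turns the forms $\mathrm dx/x$ and $\mathrm dx/(x-1)$ into $\mathrm dy/(y-\omega)$ and $\mathrm dy/y$. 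The $\mathrm dy/y$ letters would then be unshuffled to the left using $G(0;1)=0$.

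If this works, combining it with the path $0\to1\to\varrho$ (instead of $0\to\omega\to\varrho$) would give GPLs $G(\dots;1)$ with letters in $\{0,\omega\}$, times values at $1$ of $\{0,1\}$-words. A symmetric treatment of $\bar\varrho$, followed by complex conjugation, brings in $\omega^2$. Residual letters $0$ inside the $\alpha$-block would be absorbed by rewriting those factors through \eqref{eq:Zk(N)''}, or equivalently through the $\omega$-scaled GPLs $G(\beta;\omega)$.

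\textbf{Integrality and the final inclusion.} Integrality of the coefficients should survive throughout, because path composition, shuffles and the regularization $G(\boldsymbol0_n;z)=\log^n z/n!$ (whose factorials cancel against the shuffle multiplicities) only ever produce integer coefficients. The final inclusion $\subseteq\mathfrak Z_{a_1+\dots+a_n}(3)$ then follows from \eqref{eq:Zk(N)''} together with Goncharov's filtration $\mathfrak Z_j(3)\mathfrak Z_k(3)\subseteq\mathfrak Z_{j+k}(3)$. As a sanity check on the bookkeeping, I would confirm the resulting identities numerically against \eqref{eq:mu2}--\eqref{eq:mu5} at low weight.
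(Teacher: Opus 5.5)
Your ``$=$'' part and the splitting of the path at $\omega$ are fine. In fact the splitting $0\to\omega\to\varrho$ is exactly what produces the factors $G(\beta;\omega)$ of the target span. The gap is in the step you flag as the main obstacle: the change of variables you propose does not do what you claim.

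The map $x=1-\omega^2y$ sends $y=1$ to $1-\omega^2=\frac32+\frac{\sqrt3}{2}i$, not to $\varrho$. The preimage of $\varrho$ is $y=(1-\varrho)\omega=\varrho$. So the piece from $x=1$ to $x=\varrho$ becomes a (regularized) iterated integral from $0$ to $\varrho$ in the letters $\{0,\omega\}$. That rescales to a word in $\{0,\varrho\}$ evaluated at $1$, which is level $6$ again, so nothing is gained.

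This is structural, not a slip. The triangle $0,1,\varrho$ is equilateral, so any affine map sending the endpoints $1,\varrho$ of that piece to $0,1$ sends the remaining singular point $x=0$ to $\varrho$ or $\bar\varrho$. The anharmonic maps preserve both $\{0,1,\infty\}$ and $\{\varrho,\bar\varrho\}$, so they cannot lower the level either. The detour through $x=1$ causes two further problems. The junction then sits on a singularity of $\mathrm dx/(x-1)$, so tangential base points are needed. It also turns the letter $1$ into a letter $0$ in $y$, which lies outside the alphabet $\{\omega,\omega^2\}$ required for the $\alpha$'s.

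The missing idea is a genuinely M\"obius reparametrization of the short piece. It should send the singular points $0,1,\infty$ of $x$ to $\omega^2,\infty,\omega$, and the endpoints $\omega,\varrho$ to $0,1$. The map $x=\frac{z-\omega^2}{z-\omega}$ does this, and it is what the paper uses. Under it, $\frac{\mathrm dx}{x}=\bigl(\frac{1}{z-\omega^2}-\frac{1}{z-\omega}\bigr)\mathrm dz$ and $\frac{\mathrm dx}{x-1}=-\frac{\mathrm dz}{z-\omega}$. As $z$ runs over $[0,1]$, $x$ traces the unit-circle arc from $\omega$ to $\varrho$, which is homotopic to your segment in $\mathbb C\setminus\{0,1\}$.

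Expanding multilinearly, $I_{[\omega,\varrho]}(\gamma_1,\dots,\gamma_k)$ is a $\mathbb Z$-linear combination of $G(\alpha_1,\dots,\alpha_k;1)$ with $\alpha_i\in\{\omega,\omega^2\}$ and no zero letters. Together with your splitting, this yields exactly the middle line of \eqref{eq:MCVinCMZV3}. The paper phrases the same computation as an induction on the GPL recursion in $z$, starting from $G(1;x(z))=G(1;\omega)-G(\omega;z)$.

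If you prefer to stay affine, the map that works is $y=1+(\omega-1)x$. It sends all three vertices $0,1,\varrho$ to $1,\omega,\omega^2$, and the letters $0,1$ to $1,\omega$. Splitting the path as $1\to0\to\omega^2$ then gives the inclusion in $\mathfrak Z_w(3)$ with integer coefficients, though through a different intermediate span than the one stated.
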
\begin{proof}
We write  an MCV as\begin{align}
{\mu_{a_1,\dots,a_n}}={}&\left.\!\!(-1)^{n}G\bigg(\boldsymbol0_{a_{1}-1},1,\boldsymbol0_{a_{2}-1},1,\dots,\boldsymbol0_{a_{n}-1},1;\frac{z-\omega^{2}}{z-\omega}\bigg)\right|_{z=1}.
\end{align} Noting that \begin{align}
-\frac{\partial}{\partial z}G\bigg( 0,\beta_2,\dots,\beta_w;\frac{z-\omega^{2}}{z-\omega} \bigg)={}&G\bigg( \beta_2,\dots,\beta_w;\frac{z-\omega^{2}}{z-\omega} \bigg)\left[\frac{1}{z-\omega}-\frac{1}{z-\omega^{2}}\right],\\-\frac{\partial}{\partial z}G\bigg( 1,\beta_2,\dots,\beta _{w};\frac{z-\omega^{2}}{z-\omega} \bigg)={}&G\bigg( \beta_2,\dots,\beta_w;\frac{z-\omega^{2}}{z-\omega} \bigg)\frac{1}{z-\omega} ,
\end{align} we may build  \begin{align}\begin{split}&
\left.\!\!G\bigg(\boldsymbol0_{a_{1}-1},1,\boldsymbol0_{a_{2}-1},1,\dots,\boldsymbol0_{a_{n}-1},1;\frac{z-\omega^{2}}{z-\omega}\bigg)\right|_{z=1}
\\\in {}&\Span_\mathbb Z\left\{ G(\alpha_{1},\dots,\alpha_{m};1)G(\beta_{1},\dots,\beta_{w-m};\omega)\middle|\begin{smallmatrix}w=a_1+\dots+a_n,m\in\mathbb Z\cap[0,w]\\\alpha_1,\dots,\alpha_m\in\left\{\omega,\omega^2\right\}\\\beta_{1},\dots,\beta_{w-m}\in\{0,1\}\end{smallmatrix}\right\}\end{split}\label{eq:MCVdescent_prep}\end{align}inductively on the GPL recursion in \eqref{eq:GPL_rec} and the initial condition\begin{align}
G\left( 1;\frac{z-\omega^{2}}{z-\omega} \right)=G(1;\omega)-G(\omega;z).
\end{align} Therefore, each MCV is characterized by \eqref{eq:MCVinCMZV3}.
\end{proof}

At this stage, we have embedded every MCV of weight $w$ into $ \mathfrak Z_w(3)$,\footnote{This explicit embedding also helps us understand  conceptually the (relatively painless) generations of algebraic relations for MCVs (see Tables \ref{tab:MCV2345}--\ref{tab:MCV8} and Propositions \ref{prop:Lyn56}--\ref{prop:Lyn78}) in an automated manner.  The algebraic relations for  CMZVs of level $N$ are ``standard'' (a notion introduced by Deligne in a letter to Goncharov and Racinet \cite[Footnote 1]{Zhao2010}), when  $N$ is equal to $ 1$, $2$, $3$, or an integer power of a prime $ p\geq5$. For other levels, ``non-standard'' $ \mathbb Q$-linear relations may lurk somewhere, which require improvised contour integrals (cf.\ \cite{Zhao2008} and \cite[\S5.1]{Au2022a}) to prove them. During our proofs of  Propositions \ref{prop:Lyn56} and \ref{prop:Lyn78}, we did not need to worry about ``non-standard'' relations of level $6$,  because all our targets actually resided at a ``standard'' level  $N=3$. } thereby establishing both Theorems \ref{thm:A1n} and \ref{thm:Asn} in full strength, for generic positive integers $s$. As we will  see very soon, the subtle symmetries for $s=2$ and $s=4$ provide us with further descents of the deformed Ap\'ery-like series into the $ \mathbb Q$-vector spaces of  MZVs.

\subsubsection{Descents to level 1\label{subsubsec:desc1}}We start by revisiting  \eqref{eq:A2nMZV} from the perspective of GPL fibrations, independent of the WZ-based approach of   Hou--Sun \cite[Theorem 1.3]{HouSun2026}.\begin{proposition}\label{prop:A2nMZV}For $ \I z\neq0$ and $ n\in\mathbb Z_{>0}$, we have \begin{align}
\begin{split}\mathscr G_{2,n}(z)\colonequals {}&G(z,0,\boldsymbol 1_n;1-z(1-z))-G(1,0,\boldsymbol 1_n;1-z(1-z))\\{}&{}+\sum_{\alpha_i\in\{0,1\}}\left[ 2G(0,\varrho,\alpha_1,\dots,\alpha_n ;z )+2G\bigg(0,\frac{1}{\varrho},\alpha_1,\dots,\alpha_n ;z \bigg)\right.\\&\left.{}-G(1,\varrho,\alpha_1,\dots,\alpha_n ;z )-G\bigg(1,\frac{1}{\varrho},\alpha_1,\dots,\alpha_n ;z \bigg) \right]\\\in{}&\Span_{\mathbb Q}\left\{ \left(\frac{\pi i \I z}{|\I z|}\right)^\ell Z_m G(\beta_{1},\dots,\beta_{n+2-\ell-m};z)\middle|\begin{smallmatrix}\ell,m,n+2-\ell-m\in\mathbb Z_{\geq0}\\Z_m\in\mathfrak Z_m(1)\\\beta_1,\dots,\beta_{n+2-\ell-m}\in\{0,1\}\end{smallmatrix}\right\}\equalscolon\mathfrak f_{n+2}^{(z)}.\label{eq:G2nF2n}
\end{split}
\end{align}   Consequently, we have $ \mathscr A_{2,n}\in\mathfrak Z_{n+2}(1)$, as declared in \eqref{eq:A2nMZV}.
\end{proposition}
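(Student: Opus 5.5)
The plan is to prove \eqref{eq:G2nF2n} by differentiating $\mathscr G_{2,n}(z)$ in $z$ and inducting on the weight. Then I specialize $z$ to a point where every piece of $\mathscr G_{2,n}$ is either an MZV or a multiple of $\mathscr A_{2,n}$.

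\emph{Step 1: the derivative.} Put $u\colonequals 1-z(1-z)$, so that $\D u=(2z-1)\D z$ and $\D\log u=\D\log(z-\varrho)+\D\log(z-\frac1\varrho)$. I apply the differential relation \eqref{eq:GPL_diff_form} to $G(z,0,\boldsymbol1_n;u)$ and to $G(1,0,\boldsymbol1_n;u)$; in the first term the $z$-dependence sits both in the argument and in the leading parameter. The differences that occur, namely $u-z=(1-z)^2$, $u-1=z(z-1)$, $u$ itself and $z-1$, all factor over $\{z,z-1,z-\varrho,z-\frac1\varrho\}$. Hence $\D\mathscr G_{2,n}$ is a combination of the four forms $\D\log z$, $\D\log(z-1)$, $\D\log(z-\varrho)$, $\D\log(z-\frac1\varrho)$, with coefficients of weight $n+1$.

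The key claim is that the bracketed sum over $\alpha_i\in\{0,1\}$ is designed so that the $\D\log(z-\varrho)$ and $\D\log(z-\frac1\varrho)$ coefficients cancel. The inner functions arising from the first line are $G(0,\boldsymbol1_n;u)$-type terms. By the remark after Proposition \ref{prop:Asn}, $-n!\,G(1,\boldsymbol0_n;t(1-t))=a_{2,n}(t)$, and by \eqref{eq:sandwich} this equals $-n!\sum_{\alpha_i}[G(\varrho,\boldsymbol\alpha;t)+G(\frac1\varrho,\boldsymbol\alpha;t)]$. After the duality $t\mapsto 1-t$, these should match exactly the $\D$-coefficients produced by the $G(0,\varrho,\dots)$, $G(0,\frac1\varrho,\dots)$, $G(1,\varrho,\dots)$, $G(1,\frac1\varrho,\dots)$ terms.

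What remains is $\D\mathscr G_{2,n}=\frac{\D z}{z}F_0(z)+\frac{\D z}{z-1}F_1(z)$. The coefficients $F_0,F_1$ are built from objects like $\mathscr G_{2,n-1}$, from $G(\boldsymbol\beta;u)$ with $\boldsymbol\beta\in\{0,1\}^*$, and from products of logarithms. Fibrating $G(\boldsymbol\beta;1-z(1-z))$ with respect to $z$ (the \texttt{fibrationBasis} step) places them in $\mathfrak f^{(z)}_{n+1}$. Integrating from a base point then gives $\mathscr G_{2,n}\in\mathfrak f^{(z)}_{n+2}$, once the constant of integration is known to be an MZV times a power of $\pi i\,\I z/|\I z|$. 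I would pin this constant down from the regularized limit $z\to0$ in the upper or lower half-plane, where only logarithmic singularities survive. I expect the cancellation bookkeeping of this step to be the main obstacle. If the direct computation gets unwieldy, I would first check it for $n=1,2$ by machine and then organise the general case by induction on $n$ using the shuffle \eqref{eq:log_pow}.

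\emph{Step 2: specialization.} I let $z\to1$ within $\I z>0$. Since $a_{2,n}(t)=a_{2,n}(1-t)$ and $a_{2,n}$ vanishes like $t\log^nt$ at both endpoints, $\int_0^1a_{2,n}(t)\frac{\D t}{t-1}=-\int_0^1a_{2,n}(t)\frac{\D t}{t}$ converges. This gives
\[\sum_{\alpha_i\in\{0,1\}}\Big[G(1,\varrho,\boldsymbol\alpha;1)+G\big(1,\tfrac1\varrho,\boldsymbol\alpha;1\big)\Big]=-\sum_{\alpha_i\in\{0,1\}}\Big[G(0,\varrho,\boldsymbol\alpha;1)+G\big(0,\tfrac1\varrho,\boldsymbol\alpha;1\big)\Big].\]
So the bracketed sum tends to $6\RE\sum_{\alpha_i}G(0,\varrho,\boldsymbol\alpha;1)=-3\mathscr A_{2,n}/n!$, by \eqref{eq:Asn_Z(6)} with $s=2$.

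In the first line of $\mathscr G_{2,n}$ we have $u\to1$ and the leading parameter $z\to1$. Rescaling by \eqref{eq:GPLscaling} and using shuffle regularization, the difference of its two terms tends to a polynomial in $\pi i$ and MZVs. Likewise every element of $\mathfrak f^{(z)}_{n+2}$ has a regularized limit in $\mathfrak Z_{n+2}(1)[\pi i]$. Finally, taking real parts gives $\mathscr A_{2,n}\in\mathfrak Z_{n+2}(1)$, because $\pi^2\in\mathfrak Z_2(1)$ and the odd powers of $\pi i$ drop out.
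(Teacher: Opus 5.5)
Your Step 2 agrees with the paper: the limit $z\to1\pm i0^+$, the symmetry $a_{2,n}(t)=a_{2,n}(1-t)$ that turns the bracketed sum into $-3\mathscr A_{2,n}/n!$, and real parts removing odd powers of $\pi i$. So does fixing the integration constant at $z\to0$. The only slip there is that the first line $G(z,0,\boldsymbol1_n;1-z(1-z))-G(1,0,\boldsymbol1_n;1-z(1-z))$ has no finite limit at $z=1$: it equals $[\pm\pi i+G(1;z)-2G(0;z)]\,G(0,\boldsymbol1_n;1-z(1-z))+O(z-1)$. You must therefore compare regularized limits on both sides; its regularized limit $\pm\pi i\,G(0,\boldsymbol1_n;1)$ has zero real part.

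The genuine gap is Step 1, which carries all of \eqref{eq:G2nF2n}. The bracketed sum has leading letters $0$ and $1$ only, so its derivative is $\big(\frac2z-\frac1{z-1}\big)\sum_{\alpha_i}\big[G(\varrho,\boldsymbol\alpha;z)+G(\tfrac1\varrho,\boldsymbol\alpha;z)\big]=\big(\frac2z-\frac1{z-1}\big)G(1,\boldsymbol0_n;z(1-z))$. It contains no $\D\log(z-\varrho)$ or $\D\log(z-\frac1\varrho)$, so it cannot cancel such forms. Nor does the first line produce them: in \eqref{eq:GPL_diff_form} only $u-z=(1-z)^2$, $u-1=-z(1-z)$ and $z$ occur ($u$ itself does not), and one gets
\[
\frac{\partial\mathscr G_{2,n}(z)}{\partial z}=\left(\frac2z-\frac1{z-1}\right)\big[G(1,\boldsymbol0_n;z(1-z))-G(0,\boldsymbol1_n;1-z(1-z))\big]+\frac{G(z,\boldsymbol1_n;1-z(1-z))}{z}.
\]
The letters $\varrho,\frac1\varrho$ are hidden inside the coefficients. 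Your argument that the coefficients lie in $\mathfrak f^{(z)}_{n+1}$ is false: $G(\boldsymbol\beta;1-z(1-z))$ with $\boldsymbol\beta\in\{0,1\}^*$ does not fibrate into $\mathfrak f^{(z)}$. Already $G(0;1-z(1-z))=\log(1-z+z^2)=G(\varrho;z)+G(\frac1\varrho;z)$ shows this. In particular, the coefficient $G(0,\boldsymbol1_n;1-z(1-z))$ contributed by the first line is not in $\mathfrak f^{(z)}_{n+1}$ by itself.

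The missing idea is the reflection identity $G(0,\boldsymbol1_n;1-v)-G(1,\boldsymbol0_n;v)=G(0,\boldsymbol1_n;1)\in\mathfrak Z_{n+1}(1)$. It holds because the $v$-derivative of the left side is $\big[G(\boldsymbol1_n;1-v)-G(\boldsymbol0_n;v)\big]/(v-1)=0$, both GPLs in the numerator being equal to $\frac{\log^nv}{n!}$. With $v=z(1-z)$ it turns the square bracket above into a constant MZV. The relevant involution is $v\mapsto1-v$, not $t\mapsto1-t$, which fixes $t(1-t)$ and only yields the symmetry you use in Step 2. Also, the match holds only up to this constant, not exactly. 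The surviving term $G(z,\boldsymbol1_n;1-z(1-z))/z$ lies in $\mathfrak f^{(z)}_{n+1}$ precisely because its parameters are in $\{z,1\}$ rather than $\{0,1\}$, so the factor $1-z+z^2$ never arises. Indeed $\partial_zG(z,\boldsymbol1_n;1-z(1-z))=\frac{1}{z-1}\big[\frac{\log^n(z(1-z))}{n!}+G(z,\boldsymbol1_{n-1};1-z(1-z))\big]$, with value $G(0,\boldsymbol1_n;1)$ at $z=0$, and induction on $n$ finishes. Without these two observations, Step 1 remains an expectation (``should match'', machine checks for $n=1,2$). Contrary to your guess, no $\mathscr G_{2,n-1}$ appears among the coefficients.
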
\begin{proof}By a variation on the arguments behind \eqref{eq:Asn_int_sum}, we get \begin{align}
\begin{split}\mathscr G_{2,n}(z)= {}&G(z,0,\boldsymbol 1_n;1-z(1-z))-G(1,0,\boldsymbol 1_n;1-z(1-z))\\&{}+\int_0^z \left( \frac{2}{t} -\frac{1}{t-1}\right)G(1,\boldsymbol0_{n};t(1-t))\D t
\end{split}
\label{eq:G2n_int}\end{align}and \begin{align}
\mathscr A_{2,n}=-\frac{n!}{3}\int_0^1 \left( \frac{2}{t} -\frac{1}{t-1}\right)G(1,\boldsymbol0_{n};t(1-t))\D t.
\end{align}It is clear that
\begin{align}
\lim_{z\to0}[\mathscr G_{2,n}(z)+G(1,0,\boldsymbol 1_n;1-z(1-z))]={}&G(0,0,\boldsymbol 1_n;1)\in\mathfrak Z_{n+2}(1).
\end{align}Moreover, similar to our derivation  of \eqref{eq:MCVdescent_prep}, we may build \begin{align}
\begin{split}&G(1,0,\boldsymbol 1_n;1-z(1-z))\\\in{}& \Span_\mathbb Z\left\{ Z_{m}G(\beta_{1},\dots,\beta_{n+2-m};z(1-z))\middle|\begin{smallmatrix}m,n+2-m\in\mathbb Z_{\geq0}\\Z_{m}\in\mathfrak Z_m(1)\\\beta_{1},\dots,\beta_{n+2-m}\in\{0,1\}\end{smallmatrix}\right\}
\end{split}
\end{align}inductively on GPL recursions.
 Therefore,
one can explicitly construct a function  $\mathring{\mathscr G}_{2,n}(z)\in \mathfrak f_{n+2}^{(z)} $ such that $ \lim_{z\to0}[\mathscr G_{2,n}(z)-\mathring{\mathscr G}_{2,n}(z)]=0$.
Next, we will prove $\mathscr G_{2,n}(z)\in\mathfrak  f_{n+2}^{(z)} $ by GPL recursions and induction on $n$.

In view of \eqref{eq:GPL_diff_form} and \eqref{eq:G2n_int}, we may put down\begin{align}\begin{split}
\frac{\partial\mathscr G_{2,n}(z)}{\partial z}={}&\left( \frac{2}{z} -\frac{1}{z-1}\right)[G(1,\boldsymbol0_{n};z(1-z))-G(0,\boldsymbol1_{n};1-z(1-z))]\\{}&+\frac{G(z,\boldsymbol1_{n};1-z(1-z))}{z}.\label{eq:dG2n(z)}
\end{split}
\end{align}Meanwhile, we have \begin{align}
\begin{split}\frac{\partial}{\partial u}[G(0,\boldsymbol1_{n};1-u)-G(1,\boldsymbol0_{n};u)]\xlongequal{\text{\eqref{eq:GPL_rec}}}{}&\frac{G(\boldsymbol1_{n};1-u)-G(\boldsymbol0_{n};u)}{u-1}\\\xlongequal{\text{\eqref{eq:GPL0}}}{}&\frac{G(\boldsymbol1_{n};1-u)-\frac{\log^n u}{n!}}{u-1}\\\xlongequal{\text{\eqref{eq:ShGPL}}}{}&\frac{\frac{[G(1;1-u)]^{n}}{n!}-\frac{\log^n u}{n!}}{u-1}\xlongequal{\text{\eqref{eq:G(a;t)}}}0,
\end{split}
\end{align}so \begin{align} \begin{split}G(0,\boldsymbol1_{n};1-u)-G(1,\boldsymbol0_{n};u)={}&G(0,\boldsymbol1_{n};1)-G(1,\boldsymbol0_{n};0)\xlongequal{\text{\eqref{eq:ShGPL}}}G(0,\boldsymbol1_{n};1)\\\xlongequal{\text{\cite[\S7]{BorweinBradleyBroadhurstLisonek2001}}}{}&(-1)^{n+1}G(\boldsymbol0_{n},1;1)=(-1)^{n+1}\zeta_{n+1}\label{eq:G011G110}
\end{split}\end{align}simplifies \eqref{eq:dG2n(z)} to \begin{align}
\frac{\partial\mathscr G_{2,n}(z)}{\partial z}=(-1)^{n}\left( \frac{2}{z} -\frac{1}{z-1}\right)\zeta_{n+1}+\frac{G(z,\boldsymbol1_{n};1-z(1-z))}{z}.
\end{align}Here, we may reduce the right-hand side of\begin{align}
\begin{split}G(z,\boldsymbol1_{n};1-z(1-z))=G(0,\boldsymbol1_{n};1)+\int_{0}^z\frac{\partial G(w,\boldsymbol1_{n};1-w(1-w))}{\partial w}\D w
\end{split}
\end{align}by  \eqref{eq:GPL_diff_form} and \eqref{eq:G011G110}, so as to arrive at\begin{align}
\begin{split}&G(z,\boldsymbol1_{n};1-z(1-z))\\={}&(-1)^{n+1}\zeta_{n+1}+\int_{0}^z\frac{ G(\boldsymbol1_{n};1-w(1-w))+ G(w,\boldsymbol1_{n-1};1-w(1-w))}{w-1}\D w\\\xlongequal{\text{\eqref{eq:G(a;t)}}}{}&(-1)^{n+1}\zeta_{n+1}+\int_0^z\frac{ \frac{\log^{n}(w(1-w))}{n!}+ G(w,\boldsymbol1_{n-1};1-w(1-w))}{w-1}\D w.
\end{split}
\end{align} It then follows by induction that $ G(z,\boldsymbol1_{n};1-z(1-z))\in \mathfrak f_{n+1}^{(z)}$ and  $\mathscr G_{2,n}(z)-\mathring{\mathscr G}_{2,n}(z)\in\mathfrak  f_{n+2}^{(z)} $.

Now that \eqref{eq:G2nF2n} is proven, we may study\begin{align}\begin{split}
\mathscr G_{2,n}(z)={}&G(z,0,\boldsymbol 1_n;1-z(1-z))-G(1,0,\boldsymbol 1_n;1-z(1-z))\\{}&-\frac{3\mathscr A_{2,n}}{n!}+\int_1^z \left( \frac{2}{t} -\frac{1}{t-1}\right)G(1,\boldsymbol0_{n};t(1-t))\D t
\end{split}
\end{align}for $ z\to1\pm i0^+$.  By the shuffle relation \eqref{eq:ShGPL}, we have \begin{align}
\begin{split}&G(z,0,\boldsymbol 1_n;1-z(1-z))-G(1,0,\boldsymbol 1_n;1-z(1-z))\\={}&[G(z;1-z(1-z))-G(1;1-z(1-z))]G(0,\boldsymbol 1_n;1-z(1-z))+O(z-1),
\end{split}
\end{align}where \begin{align}
G(z;1-z(1-z))-G(1;1-z(1-z))\xlongequal{\text{\eqref{eq:G(a;t)}}} \frac{\pi i \I z}{|\I z|}+G(1;z)-2G(0;z)
\end{align}We take regularized limits (as defined in \cite[(2.8)]{Panzer2015})  that suppress all the positive integer powers of   logarithmic divergences $ G(1;z)$ as  $ z\to1\pm i0^+$, so that\footnote{In practice,  the MZV representation for $ \Reg_{z\to1+i0^+}f(z)$ is implemented in Panzer's  \texttt{HyperInt}  package \cite{Panzer2015} for \texttt{Maple}, via the following code:\begin{quotation}\texttt{fibrationBasis(eval(regHlog(eval(fibrationBasis(f(z), [z]), [z=1])), [Hlog(1, [1])=0, delta[1]=1]));}\end{quotation}As for $   \Reg_{z\to1-i0^+}f(z)$, simply replace \texttt{delta[1]=1} by \texttt{delta[1]=-1}. } \begin{align}
\mathscr A_{2,n}=-\frac{n!}{3}\RE\Reg_{z\to1\pm i0^+}\mathscr G_{2,n}(z)\equiv-\frac{n!}{6}\left[ \Reg_{z\to1+ i0^+}\mathscr G_{2,n}(z) +\Reg_{z\to1- i0^+}\mathscr G_{2,n}(z)\right]
\end{align}is explicitly a $ \mathbb Q$-linear combination of MZVs at weight $n+2$.
\end{proof} In the proposition above, the descent of each $ \mathscr A_{2,n}$ to the MZV space $ \mathfrak Z_{n+2}(1)$ was facilitated by  
a corresponding counter-term\begin{align}\mathscr C_{2,n}(z)\colonequals 
G(z,0,\boldsymbol 1_n;1-z(1-z))-G(1,0,\boldsymbol 1_n;1-z(1-z)).
\end{align}In the regularized limits, such a  counter-term serves as zero-padding:\begin{align}
\RE\Reg_{z\to1\pm i0^+}\mathscr C_{2,n}(z)=0.
\end{align}In the next proposition, we will need more sophisticated counter-terms for $ s=4$.
\begin{proposition}
\label{prop:A4nMZV}For each  $ n\in\mathbb Z_{>0}$,  define a counter-term\begin{align}
\begin{split}\mathscr C_{4,n}(z)={}&\frac13[7G(z,1,1,0,\boldsymbol 1_n;1-z(1-z))-2G(z,z,1,0,\boldsymbol 1_n;1-z(1-z))\\{}&-G(z,1,z,0,\boldsymbol 1_n;1-z(1-z))-G(1,z,1,0,\boldsymbol 1_n;1-z(1-z))\\{}&-3G(1,1,1,0,\boldsymbol 1_n;1-z(1-z))].
\end{split}
\end{align}
For $ \I z\neq0$ and $ n\in\mathbb Z_{>0}$, we have \begin{align}
\begin{split}\mathscr G_{4,n}(z)\colonequals {}&\mathscr C_{4,n}(z)+\sum_{\alpha_i\in\{0,1\}}\left[ G(0,\alpha_{1},\alpha_{2},\varrho,\alpha_3,\dots,\alpha_{n+2} ;z )+G\bigg(0,\alpha_{1},\alpha_{2},\frac{1}{\varrho},\alpha_3,\dots,\alpha_{n+2} ;z \bigg)\right.\\&\left.{}-2G(1,\alpha_{1},\alpha_{2},\varrho,\alpha_3,\dots,\alpha_{n+2} ;z )-2G\bigg(1,\alpha_{1},\alpha_{2},\frac{1}{\varrho},\alpha_3,\dots,\alpha_{n+2} ;z \bigg) \right]\\\in{}&\Span_{\mathbb Q}\left\{ \left(\frac{\pi i \I z}{|\I z|}\right)^\ell Z_m G(\beta_{1},\dots,\beta_{n+4-\ell-m};z)\middle|\begin{smallmatrix}\ell,m,n+4-\ell-m\in\mathbb Z_{\geq0}\\Z_m\in\mathfrak Z_m(1)\\\beta_1,\dots,\beta_{n+4-\ell-m}\in\{0,1\}\end{smallmatrix}\right\}\equalscolon\mathfrak f_{n+4}^{(z)}\label{eq:G4nF4n}
\end{split}
\end{align} and \begin{align}
\RE\Reg_{z\to1\pm i0^+}\mathscr C_{4,n}(z)\in\mathfrak Z_{n+4}(1).\label{eq:C4nReg}
\end{align}Consequently, we have $ \mathscr A_{4,n}\in\mathfrak Z_{n+4}(1)$, as declared in \eqref{eq:A4nMZV}.
\end{proposition}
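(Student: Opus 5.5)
The plan is to mimic the proof of Proposition \ref{prop:A2nMZV}, now with the two-step integral structure coming from $s=4$. First, by the arguments behind \eqref{eq:Asn_int_sum} and the remark on $a_{s,n}(t)=-n!G(\boldsymbol0_{s-2},1,\boldsymbol0_{n};t(1-t))$, the GPL sum in $\mathscr G_{4,n}(z)$ equals an iterated integral of the form
\begin{align*}
\int_0^z\left(\frac{1}{t}-\frac{2}{t-1}\right)\int_0^t\left(\frac{1}{\tau}+\frac{1}{\tau-1}\right)G(1,\boldsymbol0_{n};\tau(1-\tau))\,\D\tau\,\D t,
\end{align*}
up to normalisation. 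Moreover, its value along the segment to $z=1$ is a rational multiple of $\mathscr A_{4,n}/n!$, plus a correction term integrated from $1$ to $z$. This correction vanishes in the regularised limit $z\to1\pm i0^+$, just as in the $s=2$ case.

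Next I would prove the membership \eqref{eq:G4nF4n} by differentiating in $z$ using \eqref{eq:GPL_diff_form}. In $\partial_z\mathscr G_{4,n}$, the contributions of the form $(\frac1z-\frac{2}{z-1})\,G(\ldots;z(1-z))$ from the GPL sum should cancel against the derivatives of the counter-term GPLs with argument $1-z(1-z)$. Here I would use \eqref{eq:G011G110} and its analogues, namely $G(\ldots,0,\boldsymbol1_n;1-u)$ versus $G(\ldots,1,\boldsymbol0_n;u)$, which differ by MZVs. Whatever survives should be of the form $\frac{1}{z}$ or $\frac{1}{z-1}$ times lower-weight objects, for example $G(z,\boldsymbol1_{n};1-z(1-z))$, $G(z,1,0,\boldsymbol1_n;1-z(1-z))$, $G(z,z,\ldots)$ and so on. For each of these I would show membership in $\mathfrak f^{(z)}_{\bullet}$ by a nested induction on weight: differentiate, reduce via \eqref{eq:GPL_diff_form}, and feed the result back. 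The boundary value at $z\to0$ is handled exactly as before. The GPLs in $1-z(1-z)$ with parameters in $\{0,1\}$ expand into MZVs times $G(\ldots;z(1-z))$, and the $z$-dependent parameters reduce similarly. This gives an explicit $\mathring{\mathscr G}_{4,n}\in\mathfrak f^{(z)}_{n+4}$ matching the limit at $0$, and integrating the derivative identity closes the induction. The sign $\pi i\I z/|\I z|$ enters through $G(z;1-z(1-z))-G(1;1-z(1-z))$, as in the $s=2$ proof.

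Finally, for \eqref{eq:C4nReg}, I would write each term of $\mathscr C_{4,n}(z)$ near $z=1$ via shuffles. I would factor off the leading letters $z$ and $1$, whose differences produce $\pi i\I z/|\I z|+G(1;z)-2G(0;z)$, then take $\Reg_{z\to1\pm i0^+}$ and average the two sides. The coefficients $7,-2,-1,-1,-3$ are presumably chosen so that the non-MZV pieces, meaning odd powers of $\pi i$ times level-$6$ leftovers, cancel in the real part, leaving only MZVs. Combining with $\RE\Reg\mathscr G_{4,n}\in\mathfrak Z_{n+4}(1)$, which follows from \eqref{eq:G4nF4n} since regularised values of $G(\beta;z)$ at $z=1$ with $\beta_j\in\{0,1\}$ are MZVs, yields $\mathscr A_{4,n}\in\mathfrak Z_{n+4}(1)$.

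The main obstacle is the cancellation in the derivative step. Unlike the $s=2$ case, the depth-two structure of the inner integral produces cross terms with $z$ appearing twice among the parameters. Verifying that the specific counter-term combination kills exactly the terms not expressible in $\mathfrak f^{(z)}$ may require checking identities between $G(z,z,1,0,\ldots)$-type functions and the $\varrho$-letter GPLs. I would first test this symbolically at small $n$ with \texttt{fibrationBasis}, then promote the pattern to all $n$ via the $n$-independence of the manipulations on the prefix $(\cdot,\cdot,\cdot,0)$, since the tail $\boldsymbol1_n$ only ever enters through \eqref{eq:G011G110}.
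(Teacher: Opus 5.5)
Your outline (integral representation, differentiation via \eqref{eq:GPL_diff_form}, trading $G(0,0,1,\boldsymbol0_n;u)$ for $G(1,1,0,\boldsymbol1_n;1-u)$, matching at $z\to0$, regularized limits at $z\to1\pm i0^+$) follows the paper. However, the step that carries the proposition is left open, and the route you sketch for it would fail. Write $x\colonequals1-z(1-z)=(z-\varrho)(z-1/\varrho)$.

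A termwise induction (differentiate and feed back) does not close on words containing the letter $0$. For example, $\partial_zG(z,1,0,\boldsymbol1_n;x)=\frac{1}{z-1}[G(1,0,\boldsymbol1_n;x)+G(z,0,\boldsymbol1_n;x)]$. The derivative of that bracket is $\frac{3}{z-1}G(0,\boldsymbol1_n;x)+\frac1zG(z,\boldsymbol1_n;x)$, and $G(0,\boldsymbol1_n;x)=G(1,\boldsymbol0_n;z(1-z))+(-1)^{n+1}\zeta_{n+1}$ carries the letters $\varrho^{\pm1}$, because a leading $0$ produces $\D\log x$. So $G(z,1,0,\boldsymbol1_n;x)$, $G(z,z,0,\boldsymbol1_n;x)$, etc.\ are not in $\mathfrak f^{(z)}$ individually. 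Two ingredients are missing.
\begin{enumerate}
\item[(i)] The right closed class is the GPLs at $x$ with all letters in $\{z,1\}$. For these, \eqref{eq:GPL_diff_form} involves only $\D\log$ of $x-z=(1-z)^2$, $x-1=-z(1-z)$, $z-1$, $-z$, $-1$, and never $x-0$. Hence they lie in $\mathfrak f^{(z)}$.
\item[(ii)] You need the explicit computation \eqref{eq:dG4n}:
\[
\partial_z\mathscr C_{4,n}=-\Big(\tfrac1z-\tfrac2{z-1}\Big)G(1,1,0,\boldsymbol1_n;x)-\tfrac{1}{3z}G(z,1,z,\boldsymbol1_n;x)+\tfrac{2}{3(z-1)}B,
\]
with $B\colonequals2G(z,1,0,\boldsymbol1_n;x)-G(z,z,0,\boldsymbol1_n;x)-G(1,z,0,\boldsymbol1_n;x)-G(1,1,0,\boldsymbol1_n;x)$. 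The first term pairs with the $\varrho$-sum, and the second is a $\{z,1\}$-word. Moreover $\partial_zB=-\frac1z[G(z,z,\boldsymbol1_n;x)+G(1,z,\boldsymbol1_n;x)]$, because the $G(z,0,\boldsymbol1_n;x)$ and $G(1,0,\boldsymbol1_n;x)$ contributions cancel.
\end{enumerate}
This computation is uniform in $n$, so no small-$n$ experiments are needed, and it is precisely where the coefficients $7,-2,-1,-1,-3$ enter.

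Two smaller points. First, the swap identity yields MZVs times powers of $\log u$, not just MZVs; e.g.\ $G(1,0,\boldsymbol1_n;1-u)-G(0,1,\boldsymbol0_n;u)$ contains $(-1)^{n+1}\zeta_{n+1}\log u$. This is harmless since $\log u=G(0;z)+G(1;z)$. Second, your kernel has one integration too few: for $s=4$ it is $G(0,0,1,\boldsymbol0_n;t(1-t))$.

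For \eqref{eq:C4nReg}, your guess that the coefficients are tuned to cancel level-$6$ leftovers in the real part is off; they play no role there. The paper computes the regularized limit of any $\mathbb Q$-combination of $G(\alpha;1-z(1-z))$ with $\alpha_j\in\{0,1,z\}$ as follows:
\begin{itemize}
\item treat $u=z(1-z)$ as a second variable and fibrate $f(u,z)$, giving letters $\{0,1,1-z\}$ in $u$ and $\{0,1\}$ in $z$;
\item unshuffle the leading letters $1-z$, which become $G(1;z)$ on $u=z(1-z)$;
\item exploit $u\sim1-z$.
\end{itemize}
The outcome is an MZV polynomial in $\pi i$, and $\RE$ merely drops odd powers of $\pi i$.

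Your mechanism, peeling off one leading letter via $G(z;x)-G(1;x)=\frac{\pi i\I z}{|\I z|}+G(1;z)-2G(0;z)$, is tailored to $s=2$. Here the prefixes $z11$, $zz1$, $z1z$, $1z1$, $111$ are words in $\{z,1\}$ that do not reduce to single letters under shuffle; already $1\shf z=1z+z1$ does not determine $G(1,z;x)$ by itself. In addition, $1$, $z$, $x$ coalesce at different rates: $|x-1|\sim|1-z|$ but $|x-z|=|1-z|^2$. So you need an actual argument that these regularized limits are MZVs up to powers of $\pi i$.
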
\begin{proof}Akin to the beginning of our proof for the last proposition, we avail ourselves of\begin{align}\mathscr G_{4,n}(z)\colonequals {}&\mathscr C_{4,n}(z)+\int_0^z \left( \frac{1}{t} -\frac{2}{t-1}\right)G(0,0,1,\boldsymbol0_{n};t(1-t))\D t\end{align}   and\begin{align}
\mathscr A_{4,n}=-\frac{n!}{3}\int_0^1 \left( \frac{1}{t} -\frac{2}{t-1}\right)G(0,0,1,\boldsymbol0_{n};t(1-t))\D t.
\end{align}One can also construct a function  $\mathring{\mathscr G}_{4,n}(z)\in \mathfrak f_{n+4}^{(z)} $ such that $ \lim_{z\to0}[\mathscr G_{4,n}(z)-\mathring{\mathscr G}_{4,n}(z)]=0$.
However, the inductive proof of  $\mathscr G_{4,n}(z)\in\mathfrak  f_{n+4}^{(z)} $ will require more explanations.

Thanks to \eqref{eq:GPL_diff_form}, we have\begin{align}
\begin{split}\frac{\partial\mathscr G_{4,n}(z)}{\partial z}={}&\left( \frac{1}{z} -\frac{2}{z-1}\right)[G(0,0,1,\boldsymbol0_{n};z(1-z))-G(1,1,0,\boldsymbol1_{n};1-z(1-z))]\\{}&-\frac{G(z,1,z,\boldsymbol1_{n};1-z(1-z))}{3z}\\&{}+\frac{2}{3(z-1)}[2G(z,1,0,\boldsymbol1_{n};1-z(1-z))-G(z,z,0,\boldsymbol1_{n};1-z(1-z))\\&{}-G(1,z,0,\boldsymbol1_{n};1-z(1-z))-G(1,1,0,\boldsymbol1_{n};1-z(1-z))].\label{eq:dG4n}
\end{split}
\end{align}Here, we can show that \begin{align}
G(\boldsymbol0_{m},1,\boldsymbol0_{n};u)-G(\boldsymbol1_{m},0,\boldsymbol1_{n};1-u)\in\Span_\mathbb Q\left\{ Z_{m+n+1-\ell}G(\boldsymbol0_{m-\ell};u)\middle|\begin{smallmatrix}\ell ,m,n,{m-\ell}\in\mathbb Z_{\geq0}\\Z_{m+n+1-\ell}\in\mathfrak Z_{m+n+1-\ell}(1)\\\beta_{1},\dots,\beta_{n+2-m}\in\{0,1\}\end{smallmatrix}\right\},
\end{align}after multiplying both sides of \begin{align}
G(0,\boldsymbol1_{n};1-u)=G(1,\boldsymbol0_{n};u)+(-1)^{n+1}\zeta_{n+1}\tag{\ref{eq:G011G110}$'$}
\end{align} by integer powers of $ G(1;1-u)=G(0;u)=\log u$ and appealing to GPL shuffles \eqref{eq:ShGPL} repeatedly.
Thus, we can identify the first line on the right-hand side of \eqref{eq:dG4n} with the derivative of a certain member in $ \mathfrak f_{n+4}^{(z)}$. To move onto the second line, we note that our previous demonstration for  $ G(z,\boldsymbol1_{n};1-z(1-z))\in \mathfrak f_{n+1}^{(z)}$  generalizes to\begin{align}
G(\alpha_{1},\dots,\alpha_m;1-z(1-z))\in \mathfrak f_m^{(z)}
\end{align} whenever $ \alpha_1,\dots,\alpha_m\in\{z,1\}$. This observation also helps us resolve the remaining terms on the right-hand side  of  \eqref{eq:dG4n}, which yield \begin{align}
\begin{split}&\frac{\partial}{\partial z}[2G(z,1,0,\boldsymbol1_{n};1-z(1-z))-G(z,z,0,\boldsymbol1_{n};1-z(1-z))\\&{}-G(1,z,0,\boldsymbol1_{n};1-z(1-z))-G(1,1,0,\boldsymbol1_{n};1-z(1-z))]\\={}&-\frac{G(z,z,\boldsymbol1_{n};1-z(1-z))+G(1,z,\boldsymbol1_{n};1-z(1-z))}{z}.
\end{split}
\end{align} Therefore,  the relation in \eqref{eq:G4nF4n} holds true.

Our next goal is to verify \eqref{eq:C4nReg}, so that\begin{align}
\mathscr A_{4,n}=-\frac{n!}{3}\RE\left[\Reg_{z\to1\pm i0^+}\mathscr G_{4,n}(z)-\Reg_{z\to1\pm i0^+}\mathscr C_{4,n}(z)\right]\in\mathfrak Z_{n+4}(1).
\end{align}Towards this end, we will first show how to evaluate  $ \Reg_{z\to1\pm i0^+}\mathscr C_{4,n}(z)$ in  Panzer's  \texttt{HyperInt}  package \cite{Panzer2015}, and then justify the algorithm in the framework of   MZV characterizations. Pick a variation on the counter-term $ \mathscr C_{4,n}(z)$ as follows:\begin{align}
\begin{split}f(u,z)\colonequals{}&\frac13[7G(z,1,1,0,\boldsymbol 1_n;1-u)-2G(z,z,1,0,\boldsymbol 1_n;1-u)\\{}&-G(z,1,z,0,\boldsymbol 1_n;1-u)-G(1,z,1,0,\boldsymbol 1_n;1-u)-3G(1,1,1,0,\boldsymbol 1_n;1-u)].
\end{split} 
\end{align}We claim that   $ \Reg_{z\to1+i0^+}\mathscr C_{4,n}(z)$  is equal to the output of the following \texttt{Maple} code:\begin{quote}\texttt{eval(regHlog(eval(fibrationBasis(eval(regHlog(eval(fibrationBasis(f(u, z), [u, z]), [u=1-z])), [Hlog(1-z, [1-z])=Hlog(z, [1])]), [z]), [z=1])), [Hlog(1, [1])=0, delta[1]=1]);}\end{quote}while $ \Reg_{z\to1-i0^+}\mathscr C_{4,n}(z)$ 
is generated after one  replaces \texttt{delta[1]=1} by \texttt{delta[1]=-1} in the code above. Here is a step-by-step justification of this algorithm:\begin{itemize}
\item 
The first step produces an explicit  fibration\begin{align}
\begin{split}&f(u,z)=\fib_{u,z}f(u,z)\\\in{}&\Span_{\mathbb Q}\left\{ \left(\frac{\pi i \I z}{|\I z|}\right)^\ell Z_m G(\beta_{1},\dots,\beta_{n+4-\ell-m};u)\middle|\begin{smallmatrix}\ell,m,n-\ell-m\in\mathbb Z_{\geq0}\\Z_m\in\mathfrak Z_m(1)\\\beta_1,\dots,\beta_{n+4-\ell-m}\in\{0,1,1-z\}\end{smallmatrix}\right\}
\end{split}
\end{align}through the operation \texttt{fibrationBasis(f(u, z), [u, z])}, where $ \frac{\I z}{|\I z|}$ is represented by \texttt{delta[z]} in the \texttt{Maple} code. Here, the fibration provides an identity involving bivariate GPLs, in which we may set $ u=z(1-z)$ to recover the original counter-term $\mathscr C_{4,n}(z)$.\item The second step gets us ready  for the regularized expansion 
with respect to $ u=z(1-z)=1-z+O((1-z)^2)$. By the code \texttt{eval(regHlog(eval(..., [u=1-z])),
[Hlog(1-z, [1-z])=Hlog(z, [1])])}, we trick  the program into  unshuffling  the GPLs, so that all the positive integer powers of \begin{align}
G(1-z;u)=G(1-z;z(1-z))=G(1;z)
\end{align} are highlighted, in the guise of manipulating ``\texttt{Hlog(1-z, [1-z])}'' [which is\ a  divergent GPL ``$ G(1-z;1-z)$''].\item Hitting \texttt{eval(regHlog(eval(fibrationBasis(..., [z]), [z=1])), [Hlog(1, [1])=0, delta[1]=1])} on the outcome of the last step, we eventually extract the regularized limit  $ \Reg_{z\to1-i0^+}\mathscr C_{4,n}(z)$. Since the fibration with respect to $z$ leaves us GPLs  whose parameters belong to  $ \{0,1\}$, we  end up with a $ \mathbb Q$-linear combination of MZVs.
\end{itemize}This concludes our proof.
\end{proof}With the algorithms outlined in the last two propositions, we may verify all the MZV expressions\footnote{The choices of  representative MZVs up to weight 10 are identical across the works of  Au \cite{Au2025a,Au2022a}, Broadhurst--Kreimer \cite{BroadhurstKreimer1997}, 
Panzer \cite{Panzer2015}, and Schnetz \cite{Schnetz2010}. Opinions differ at higher weights. At 
weight 11, we  follow the convention of Au--Panzer, instead of Broadhurst--Kreimer--Schnetz, picking $ \zeta_{5,3,3}$ rather than $ \zeta_{3,5,3}=\frac{13\cdot23\zeta _{11}}{2}-2\zeta _{5,3,3}-3\cdot5\pi ^{2} \zeta _{9}+ \zeta _{5,3}\zeta _{3}-\frac{\pi ^{4} \zeta _{7}}{3\cdot5}+\frac{\pi ^{6} \zeta _{5}}{3^{3}\cdot7}$, so as to avoid a non-monotone sequence of integers in the subscript. 
At weight 12, we adhere to the preference of Broadhurst--Kreimer--Schnetz (which favored $ \zeta_{4,4,2,2}$), instead of  Au--Panzer \Big(which used $ \zeta_{6,4,1,1}=-\frac{199\zeta _{9,3}}{2^{4}\cdot3}+\frac{3\zeta _{4,4,2,2}}{2^{3}}+\frac{3\pi ^{2} \zeta _{7,3}}{7}-\frac{1439\zeta _{9} \zeta _{3}}{2^{4}\cdot3^{2}}+\frac{11\pi ^{4} \zeta _{5,3}}{2^{4}\cdot3\cdot5}+\frac{3^{3}\cdot5\zeta _{7}\zeta _{5} }{2^{3}}-\frac{ \pi ^{2} \zeta _{7} \zeta _{3}}{2^{2}\cdot3}-\frac{43\pi ^{2} \zeta _{5}^2}{2^{3}\cdot3\cdot7}+\frac{101\pi ^{4} \zeta _{5} \zeta _{3}}{2^{4}\cdot3^{2}\cdot5}-\frac{\zeta _{3}^4}{2^{3}\cdot3}-\frac{ \pi ^{6} \zeta _{3}^2}{2^{2}\cdot3^{3}\cdot7}-\frac{4562213\pi ^{12}}{2^{7}\cdot3^{7}\cdot5^{3}\cdot7^{2}\cdot11\cdot13}$\Big), so that our entry for $ \mathscr A_{4,8}$ could have more integer coefficients. Unlike our choices of 
representative MCVs 
in Tables \ref{tab:MCV2345}--\ref{tab:MCV8}, the subscripts of  representative MZVs  in all these cited works are not prioritized by their lexicographic orders.  Broadhurst--Kreimer \cite{BroadhurstKreimer1997}  chose $ \zeta_{5,3}$ and $ \zeta_{7,3}$ over their lexicographic precedents $ \zeta_{6,2}=-\frac{2\zeta_{5,3}}{5}+2\zeta _{5} \zeta_{3}-\frac{7\pi ^8}{2^{3}\cdot3^{3}\cdot5^{3}}$ and $ \zeta_{8,2}=-\frac{2\zeta_{7,3}}{7}+2\zeta _{7} \zeta_{3}+\frac{2^{3} \zeta _{5}^2}{7}-\frac{17\pi ^{10}}{2\cdot3^{4}\cdot5\cdot7^{2}\cdot11}$, probably due to the associations of these constants with $ (4,3)$ and $(5,3)$ torus knots, or due to the appearances of $ \zeta _{5} \zeta_{3}$ and $ \zeta _{7} \zeta_{3}$ in related formulae.

 } in Table \ref{tab:A2nA4n}, oblivious to the MCV reductions described in Tables \ref{tab:MCV2345}--\ref{tab:MCV8}. All the  computations of MZVs up to weight 12 are supported natively by Panzer's   \texttt{HyperInt}  package \cite{Panzer2015}.
\section{Discussions and outlook\label{sec:discussion}}\subsection{Log-sine integrals revisited}  Zucker \cite{Zucker1985}  and Borwein--Broadhurst--Kamnitzer \cite{BorweinBroadhurstKamnitzer2001}
derived \eqref{eq:AsLs} through iterated applications of integration by parts.
 Here, we describe a non-iterative approach, in a similar spirit to \cite[\S3]{CGZ2024}.
Starting from \begin{align}
\begin{split}\mathscr A_{s}={}&\frac{1}{2}\int_0^1\frac{\Li_{s-1}(t(1-t))}{t(1-t)}\D t\xlongequal{t=\frac{1}{1+u}}\frac{1}{2}\int_0^\infty\Li_{s-1}\left( \frac{u}{(1+u)^2} \right)\frac{\D u}{u}\\={}&\frac{1}{4\pi i}\left( \int_{ \infty +i0^+}^0+\int_0^{\infty-i0^+} \right)\Li_{s-1}\left( \frac{u}{(1+u)^2} \right)\frac{\log(-u)\D u}{u}
\end{split}
\end{align}and the jump discontinuity\begin{align}
\Li_{s-1}(x+i0^+)-\Li_{s-1}(x-i0^+)=\frac{2\pi i}{(s-2)!}\log^{s-2}x
\end{align}for $ x>1$, we may deform the contour into a tight loop wrapping around the circular arc $ \{e^{i\phi}|2\pi/3\leq\phi\leq4\pi/3\}$ \big[on which one has $ \frac{(1+u)^2}{u}=4\cos^2\frac{\phi}{2}$\big], thereby proving \eqref{eq:AsLs}. 

The log-sine integrals\begin{align}
\Ls_{n}^{(k)}(\sigma)\colonequals -\int_0^\sigma\theta^k\log^{n-1-k}\left\vert 2\sin\frac{\theta}{2} \right\vert\D\theta
\end{align}have been studied extensively by Lewin \cite{Lewin1958,Lewin1981} and Borwein--Straub \cite{BorweinStraub2011ISSAC,BorweinStraub2012Mahler,BorweinStraub2015Snp}.
The reductions of $\Ls_{n}^{(k)}(\sigma) $ to Nielsen-type polylogarithms are automated by  the 
\texttt{logsine} package  \cite{BorweinStraub2011ISSAC}. Thus, one obtains  explicit closed-form evaluations like $
\mathscr A_{16}=-\frac{(-2)^{14}}{14!}\Ls_{16}^{(1)}\left( \frac{\pi}{3} \right)\in\Span_{\mathbb Q}\{\RE\mu_{15,1},$ $\RE \mu_{13,1,1,1},$ $\pi \I\mu_{14,1}, $ $\pi\I \mu_{12,1,1,1},$ $\pi^2\RE\mu_{13,1},$ $\pi^3\I\mu_{12,1}\}+\Span_{\mathbb Q}\left\{\prod_{j=1}^n\zeta_{a_j}\middle|\begin{smallmatrix}a_j\in\mathbb Z\cap[2,13]\\\sum_{j=1}^n a_j=16\end{smallmatrix}\right\}
$, without going through the trouble of Lyndon word decompositions or Nielsen reductions of MCVs associated with non-monotone Lyndon words.

The techniques presented in the last two paragraphs will not work for $  
\mathscr A_{s,n}$ when $n$ is a positive integer.  In fact, Table \ref{tab:Asn} tells us that the CMZV structures of the deformed series $ \mathscr A_{s,n}$ are richer than the Ap\'ery-like series $ \mathscr A_s$, where $ s\in\{3\}\cup\mathbb Z_{>4}$. If we assume that the Deligne--Goncharov bound $ \dim_{\mathbb Q}\mathfrak Z_4(3)\leq16$ (or  $ \dim_{\mathbb Q}\mathfrak Z_4(6)\leq  55$) \cite[Corollaire 5.25(iii)]{DeligneGoncharov2005} sharpens to an equality, then  $ \mathscr A_{3,1}=-2(\I\mu_{2})^{2}$ is not a $ \mathbb Q$-linear combination of the following log-sine integrals:{\allowdisplaybreaks
\begin{align}
\Ls_4^{(0)}\left( \frac{\pi}{3} \right)={}&\frac{9\I\mu_4}{2}+\frac{\pi\zeta_3}{2},\\\Ls_4^{(1)}\left( \frac{\pi}{3} \right)={}&-\frac{17 \pi ^4}{6480},\\\Ls_{4}^{(2)}\left( \frac{\pi}{3} \right)={}&-2\I\mu_4+\frac{2\pi\zeta_3}{9}+\frac{\pi^2\I\mu_2}{9},\\\Ls_4^{(3)}\left( \frac{\pi}{3} \right)={}&-\frac{\pi ^4}{324}.
\end{align}
}Barring any miraculous reductions\footnote{The number $ \RE\mu_{4,2}$ has appeared in Laporta's $ V_{6b}$ term \cite[(15)]{Laporta:2017okg} for the 4-loop contribution to electron's anomalous magnetic moment. Had this number been reducible to well-known constants of simpler shapes, it would have   been detected (with a high likelihood) in Laporta's extensive studies. It is also worth noting that in both Laporta's $ V_{6b}$  \cite[(15)]{Laporta:2017okg} and our $ \mathscr A_{3,3}$ (see Table \ref{tab:Asn}), the number  $ \RE\mu_{4,2}$ appears in a combination $ \RE\mu_{4,2} +\I\mu_4\I\mu_2$. } of $ \RE\mu_{4,2}$ into a polynomial of MCVs in simpler shapes, the number $ \mathscr A_{3,3}$ does not appear to live in an algebra generated by log-sine integrals evaluated at $ \frac{\pi}{3}$.

\subsection{Broadhurst's conjectures on MCVs\label{subsec:conjMDV}}

The  MCVs\footnote{Let $ \mathfrak C_w\colonequals \Span_{\mathbb Q}\{\mu_{a_1,\dots,a_n}|a_1+\dots+a_n=w\}$ be the $ \mathbb Q$-vector space spanned by MCVs of weight $w$. It has been conjectured (see \cite[Conjecture 5.1]{BorweinBroadhurstKamnitzer2001} or \cite[Conjecture 1]{Broadhurst2014MDV}) that $ \dim_{\mathbb Q}\mathfrak C_w\overset?=\dim_{\mathbb Q}\mathfrak Z_w(2)\overset?=F_{w+1}$, where $ F_{w+1}$ is the $ (w+1)$-st Fibonacci number. By comparison, there are provable upper bounds  $ \dim_{\mathbb Q}\mathfrak Z_w(3)\leq 2^{w}$  and $ \dim_{\mathbb Q}\mathfrak Z_w(6)\leq F_{2w+2}$ \cite[Corollaire 5.25(iii)]{DeligneGoncharov2005} that is probably sharp (according to numerical evidence for small $w$).} (and more generally, the CMZVs of level 6) feature prominently in high energy physics \cite{Broadhurst1999,HennSmirnovSmirnov2017,Laporta:2017okg,Schnetz2018}.
 Numerical experiments revealed many algebraic relations among these CMZVs, most of which are heretofore unproven. 

Broadhurst  \cite[Conjecture 4]{Broadhurst2014MDV} predicted that for each odd weight $ w>1$, there exists a unique (up to an overall scaling) $ \mathbb Z$-linear combination of \begin{align}
\I G(\underset{w-2k-1}{\underbrace{0,\dots,0}},\varrho,\underset{2k-1}{\underbrace{0,\dots,0}},1;1),\quad \text{where }1\leq k\leq \frac{w-1}{2},
\end{align}that evaluates to a rational multiple of $ \pi^w$. Broadhurst deemed this conjecture as  ``[lying]  at the heart of'' his paper \cite{Broadhurst2014MDV}. For weights 3 and  5, Au's    \texttt{MultipleZetaValues} package \cite{Au2025a,Au2022a}  leaves us \begin{align}
\I G(\varrho,0,1;1)=-\frac{7 \pi ^3}{2^{2}\cdot3^{4}}
\end{align}and \begin{align}
\left\{\begin{array}{@{}r@{{}={}}l}
\I G(0,0,\varrho,0,1;1) & -\frac{2\I\mu_{4,1}}{3}-\frac{2\zeta_{3}\I\mu_2}{3}+\frac{5 \pi ^5}{3^{7}}, \\
\I G(\varrho,0,0,0,1;1) & \I\mu_{4,1}+\zeta_{3}\I\mu_2-\frac{173 \pi ^5}{2^{3}\cdot3^{6}\cdot5}, \\
\end{array}\right.
\end{align}which add up to \begin{align}
3\I G(0,0,\varrho,0,1;1)+2\I G(\varrho,0,0,0,1;1)=-\frac{73 \pi ^5}{2^{2}\cdot3^{6}\cdot5}.
\end{align}At weight 7,  with the  packages  \texttt{HPL} \cite{Maitre2005,Maitre2012} and \texttt{HyperInt} \cite{Panzer2015}, one may implement the procedures in \S\ref{sec:evalClGl} [up to  Proposition \ref{prop:Lyn78}(a)], and verify that \begin{align}
\left\{\begin{array}{@{}r@{{}={}}l}
\I G(0,0,0,0,\varrho,0,1;1) & -5 \I\mu _{6,1}-\I\mu _{5,2}-\frac{29 \zeta _{5} \I\mu _{2}}{2\cdot3^{3}}-3 \zeta _{3} \I\mu _{4}+\frac{7\cdot13 \pi ^7}{2^{5}\cdot3^{7}}, \\
\I G(0,0,\varrho,0,0,0,1;1) & \frac{241\text{Im$\mu $}_{6,1}}{3\cdot19}+\frac{2\cdot3^{2} \I\mu _{5,2}}{19}+\frac{7\zeta _{3} \I\mu _{4}}{3}-\frac{2^{4}\cdot17\pi ^{7}}{3^{10}\cdot5}, \\\I G(\varrho,0,0,0,0,0,1;1) & \I\mu _{6,1}+\zeta _{5} \I\mu _{2}+\zeta _{3} \I\mu _{4}-\frac{4919 \pi ^7}{2^{3}\cdot3^{9}\cdot5\cdot7},
\end{array}\right.
\end{align}so one gets\begin{align}
\begin{split}&2\cdot3^{3}\I G(0,0,0,0,\varrho,0,1;1)+3\cdot19\I G(0,0,\varrho,0,0,0,1;1)+29\I G(\varrho,0,0,0,0,0,1;1)\\={}&-\frac{90163 \pi ^7}{2^{4}\cdot3^{9}\cdot5\cdot7}.
\end{split}
\end{align}Thus far, we have given a partial answer to Broadhurst's question on the existence of $ \mathbb Z$-linear combinations. The corresponding uniqueness part still hinges on some open questions regarding the $ \mathbb Q$-linear independence among  MCVs.

Both   \cite[Conjecture 4]{Broadhurst2014MDV} and \cite[Conjecture 5]{Broadhurst2014MDV} in Broadhurst's work concern Clausen descents similar to the patterns in \eqref{eq:A4nMZV} and \eqref{eq:A2nMZV}. For weights up to $8$, they are consistent with explicit MCV reductions (Propositions \ref{prop:Lyn56} and \ref{prop:Lyn78}). It is perhaps appropriate to ask whether one can prove these conjectures for all the possible weights by introducing counter-terms,
in the spirit  of Propositions \ref{prop:A2nMZV} and \ref{prop:A4nMZV}. 

In \cite[Conjecture 2]{Broadhurst2014MDV} (see also \cite[Conjecture 5.2]{BorweinBroadhurstKamnitzer2001}), Broadhurst proposed an empirical formula \begin{align}
\prod_{w=2}^\infty\prod_{d=1}^\infty\big(1-x^w y^d\big)^{N_{w,d}}\overset?=1-\frac{x^2 y}{1-x}
\end{align}for the number $ N_{w,d}$ of primitive MCVs at any given weight $ w$ and depth $ d$. One may recast this empirical formula  into   (cf.\ \cite[(13)]{Broadhurst2014MDV}) \begin{align}
N_{w,d}\overset?=\frac{1}{w}\sum_{n|w,n|d}\mu(n)\binom{\frac{w}{n}}{\frac{d}{n}},
\end{align}where $w>2d $, and $\mu(n)$ is the M\"obius function. In particular, inspired by  the expansion $ 1-\frac{x^2 y}{1-x}=\big(1-x^2 y\big) \big(1-x^3 y\big) \big(1-x^4 y\big) \big(1-x^5 y\big) \big(1-x^5 y^2\big) \big(1-x^6 y\big) \big(1-x^6 y^2\big) \big(1-x^7 y\big) \big(1-x^7 y^2\big)^2 \big(1-x^7 y^3\big) \big(1-x^8 y\big) \big(1-x^8 y^2\big)^2 \big(1-x^8 y^3\big)^2\big[1+O\big(x^9y\big)\big]$, one may enumerate  primitive MCVs up to level 8, in Table \ref{tab:primMCVs}. The choice of either real or imaginary part in each individual entry follows that of \cite[Conjecture 3]{Broadhurst2014MDV}. Here, we have suppressed the odd zeta values $ \zeta_3$, $ \zeta_5$, $ \zeta_7$ (resp.\ the MZV $ \zeta_{5,3}$) in favor of the real parts of MCVs $\RE\mu_3 $, $ \RE\mu_5$, $\RE\mu_7$ (resp.\  $ \RE\mu_{7,1}$). \begin{table}[h]\caption{An empirical list of primitive MCVs up to weight $8$\label{tab:primMCVs}}{\footnotesize\begin{align*}\begin{array}{c|lll}\hline\hline
w & d=1 &d=2&d=3\\\hline
2 & \I\mu_{2} \\
3 & \RE\mu_{3} \\
4 & \I\mu_{4} \\
5 & \RE\mu_{5}&\I\mu_{4,1}\ \\
6 & \I\mu_{6}&\RE\mu_{4,2}\ \\
7 & \RE\mu _7&\I\mu _{6,1},\I\mu _{5,2}&\RE\mu _{4,1,2} \\
8 & \I\mu_8&\RE\mu_{7,1},\RE\mu_{6,2}&\I\mu _{6,1,1},\I\mu _{4,2,2} \\\hline\hline
\end{array}\end{align*}}\end{table}  

The primitive MCVs listed in Table \ref{tab:primMCVs} are compatible with our choices of irreducible MCVs in Tables \ref{tab:MCV2345}--\ref{tab:MCV8}
(see Footnote \ref{fn:irred_prim} for the differences between ``irreducible'' and ``primitive''). In \S\ref{subsec:algLyndonMCV}, we did not study algebraic relations among MCVs above weight $8$, due to the  current lack of automated Lyndon decompositions at higher weights. For future work, one may wish to check whether the procedures in \S\ref{subsec:algLyndonMCV}
remain effective for Broadhurst's MDV datamine \cite{Broadhurst2014MDV} beyond weight $8$.

\subsection{A related class of infinite series involving $ \binom{4k}{2k}$}
The method developed in this article draws on the coalescence of several software packages. In this subsection, we will illustrate this methodology further with the computations of \eqref{eq:4k2k} for $n\in\{0,1,2,3,4\}$. 

While it is not hard to identify the infinite series \eqref{eq:4k2k} with \begin{align}
\mathscr I_n\colonequals \left.\!\frac{\partial^n}{\partial x^n}\frac{10x-1}{\binom{4x}{2x}}\right|_{x=0}+\int_{0}^1\sum_{k=1}^\infty\left.\!\frac{\partial^n}{\partial x^n}\left[x(10x-1)t^{2x-1}(1-t)^{2x-1}\right]\right|_{x=k}\D t
\end{align}and compute \begin{align}
\mathscr I_0=-1+\int_0^1\frac{t (1-t) \left(11 t^4-22 t^3+11 t^2+9\right)\D t}{\left[1-t^2 (1-t)^2\right]^3}=\frac{2^{2} \pi }{3^{2} \sqrt{3}}\in i\sqrt{3}\mathfrak Z_1(3),
\end{align} it is a challenging task to  evaluate $
\mathscr I_n$ for $ n\in\{1,2,3,4\}$ if one has only a single platform at disposal. Concretely speaking, we face the following obstacles:\begin{itemize}
\item 
Au's \texttt{MultipleZetaValues} package \cite{Au2025a,Au2022a} refuses to process $\mathscr I_n$ for $ n\in\{1,2,3,4\}$    
through \texttt{MZIntegrate}. This hiccup is actually understandable: the roots of the polynomial $ 1-t^2 (1-t)^2$ are $ t=\frac{1\pm\sqrt{3}i}{2}$ and $ t=\frac{1\pm\sqrt{5}}{2}$, which call for CMZVs of levels $ \geq30$, beyond the scope of Au's
package.\footnote{Partial fractions may decompose $ \mathscr I_n$ into a finite sum of level-$5$ and level-$6$ CMZVs, but do not remove the obstacle completely: the  \texttt{MultipleZetaValues} package \cite{Au2025a,Au2022a} supports reductions in $ \mathfrak Z_w(5)$ only for $ w\in\{1,2,3,4\}$.}\item The native integrator of \texttt{Mathematica} struggles  with the evaluations of 
definite integrals involved in $ \mathscr I_n$ (where $n\in\mathbb Z_{>0}$), even though the integrands are just elementary functions in the ring $ \mathbb Q(t)[\log t,\log(1-t)]$.\item The native integrator of \texttt{Maple} (which relies on \cite{Frellesvig2018Maple,Frellesvig2016} for GPL manipulations)
handles $ \mathscr I_1$ very quickly, but  gives an incorrect answer \begin{align}
\frac{32}{3}+\frac{8\sqrt{3}i}{9}\Li_2\left( \frac{1-\sqrt{3}i}{2} \right)-\frac{8\sqrt{3}i}{9}\Li_2\left( \frac{1+\sqrt{3}i}{2} \right).
\end{align} The correct one should be\begin{align}
\frac{32}{3}-\frac{8\sqrt{3}i}{9}\Li_2\left( \frac{1-\sqrt{3}i}{2} \right)+\frac{8\sqrt{3}i}{9}\Li_2\left( \frac{1+\sqrt{3}i}{2} \right).
\end{align}  
\end{itemize}
We overcome these difficulties by the following makeshift procedures:\begin{itemize}
\item In Panzer's \texttt{HyperInt}  package \cite{Panzer2015} for \texttt{Maple}, evaluate  $\mathscr I_n$ for $ n\in\{1,2,3,4\}$    over the splitting field $ \mathbb Q(i,\sqrt{3},\sqrt{5})$. The results are colossal expressions of the following structure:\begin{align}
\Span_{\mathbb Q(i,\sqrt{3},\sqrt{5})}\left\{ G(\alpha_1,\dots,\alpha_n;1)[G(0;2)]^m \middle|\begin{smallmatrix}m\in\mathbb Z_{\geq0}, n\in\mathbb Z_{>0}\\\alpha_1,\dots,\alpha_n\in\left\{0,\varrho,\omega,-1,\frac{1}{\omega},\frac{1}{\varrho}\right\}\end{smallmatrix}\right\},
\end{align}
where $ \varrho\colonequals e^{\pi i/3}$ and $ \omega\in e^{2\pi i/3}$.  \item Export the aforementioned results to \texttt{Mathematica}, and simplify the GPLs with \texttt{MZExpand} in Au's \texttt{MultipleZetaValues} package \cite{Au2025a,Au2022a}. 
\end{itemize}
The net outputs are satisfyingly concise:{\allowdisplaybreaks
\begin{align}\mathscr I_1
={}&-\frac{2^{3}\I\Li_2(\omega)}{\sqrt{3}}+\frac{2^{5}}{3} \in i\sqrt{3}\mathfrak Z_2(3)+\mathbb Q,\\
\mathscr I_2={}&\frac{2^{6} \pi ^3}{3^{4} \sqrt{3}}\in i\sqrt{3}\mathfrak Z_3(3),\\\mathscr I_3
={}&-\frac{2^{4}\cdot3^{3}\I\Li_4(\omega)}{\sqrt{3}}+\frac{2^{5}\pi^{2}\I\Li_2(\omega)}{\sqrt{3}}-\frac{2^{7} \pi ^2}{3}\in i\sqrt{3}\mathfrak Z_4(3)+\mathfrak Z_2(1),\\\begin{split}\mathscr I_4
={}&-\frac{2^{10}\cdot3^{2}\sqrt{3}\I[\Li_{3,2}(\omega,1)+3\Li_{4,1}(\omega,1)]}{13} -\frac{2^{10}\cdot3\zeta_{3}\I\Li_2(\omega)}{\sqrt{3}}+\frac{2^{11}\cdot5 \pi ^5}{3^{4}\cdot13 \sqrt{3}}\\&{}+2^{12} \zeta_{3}\in i\sqrt{3}\mathfrak Z_5(3)+\mathfrak Z_3(1),
\end{split}
\end{align}
}where $ \omega=e^{2\pi i/3}$. In view of \eqref{eq:mu2}--\eqref{eq:mu5}, one can also reexpress these four results through MCVs. We close by pointing out that the CMZV structure of \eqref{eq:4k2k} for $n>4$ is  an uncharted territory, where MCVs may or may not suffice.



\end{document}